\theoremstyle{plain}
	\newtheorem{theorem}{Theorem}[section]
	\newtheorem{lemma}[theorem]{Lemma}
	\newtheorem{corollary}[theorem]{Corollary}
	\newtheorem{definition}[theorem]{Definition}
	\newtheorem{proposition}[theorem]{Proposition}
	\newtheorem{remark}[theorem]{Remark}
	\newtheorem{conjecture}[theorem]{Conjecture}
	\newtheorem{example}[theorem]{Example}
\theoremstyle{plain}
\def\R{\mathbb{R}}
\def\RN{\mathbb{R}^N}
\def\calF{\mathcal{F}}
\def\calL{\mathcal{L}}
\def\e{\varepsilon}
\def\bu{\bar{u}}
\def\tu{\tilde{u}}
\newcommand{\unifcomp}[1]{L^{#1}_{\text{ul,}\rho}(\mathbb{R}^N)}
\newcommand{\xRightarrow}[2][]{%
\ext@arrow 0055{\Rightarrowfill@}{#1}{#2}%
}
\def\Rightarrowfill@{\arrowfill@\Relbar\Relbar\Rightarrow}
\newcommand{\xLeftarrow}[2][]{%
\ext@arrow 0055{\Leftarrowfill@}{#1}{#2}%
}
\def\Leftarrowfill@{\arrowfill@\Leftarrow\Relbar\Relbar}
\newcommand{\xLongleftrightarrow}[2][]{%
\ext@arrow 0055{\llrafill@}{#1}{#2}%
}
\def\llrafill@{\arrowfill@\Leftarrow\Relbar\Rightarrow}
\begin{document}


\title[Fractional semilinear hear equation]{Fractional semilinear heat equations\\
with singular and nondecaying initial data\\
}

\author{Th\'eo Giraudon}
\address{\'Ecole Nationale des Ponts et Chauss\'ees,
6-8 avenue Blaise-Pascal Champs-sur-Marne 77455, Marne-la-Vall\'ee, France}
\email{theo.giraudon@eleves.enpc.fr}

\author{Yasuhito Miyamoto}
\address{Graduate School of Mathematical Sciences, The University of Tokyo,
3-8-1 Komaba Meguro-ku Tokyo 153-8914, Japan}
\email{miyamoto@ms.u-tokyo.ac.jp}

\begin{abstract}
We study integrability conditions for existence and nonexistence of a local-in-time integral solution of fractional semilinear heat equations with rather general growing nonlinearities in uniformly local $L^p$ spaces.
Our main results about this matter consist of Theorems~\ref{A}, \ref{B}, \ref{S5T1} and \ref{S5T2}.
We introduce a new supersolution which plays a crucial role.
Our method does not rely on a change of variables, and hence it can be applied to a wide class of nonlocal parabolic equations.
In particular, when the nonlinear term is $u^p$ or $e^u$, a local-in-time solution can be constructed in the critical case, and integrability conditions for the existence and nonexistence are completely classified.
Our analysis is based on the comparison principle, Jensen's inequality and $L^p$-$L^q$ type estimates.
\end{abstract}
\date{\today}
\subjclass[2010]{Primary: 35K55; Secondary: 35A01, 46E30}
\keywords{Local-in-time solution, Optimal singularity, Supersolutions, Fractional Laplacian}
\maketitle

\section{Introduction and main results}
We are interested in existence and nonexistence of a local-in-time solution of the Cauchy problem
\begin{equation}\label{S1E1}
\begin{cases}
\partial_tu+(-\Delta)^{\theta/2} u=f(u) & \textrm{in}\ \RN\times (0,T),\\
u(x,0)=\phi(x) & \textrm{in}\ \RN,
\end{cases}
\end{equation}
where the domain is $\RN$, $N\ge 1$, the initial function $\phi$ is nonnegative, $\phi$ may be unbounded and nondecaying and $(-\Delta)^{\theta/2}$, $0<\theta\le 2$, denotes the fractional power of the Laplace operator $-\Delta$ on $\RN$.
Throughout the present paper, we define $F(u)$ by
\[
F(u):=\int_u^{\infty}\frac{dt}{f(t)}.
\]
We impose the following assumptions on $f$:
\begin{multline}\label{F1}
f\in C^1(0,\infty)\cap C[0,\infty),\ 
f(u)>0\ \textrm{for}\ u>0,\ 
f'(u)\ge 0\ \textrm{for}\ u\ge 0,\\
\textrm{$F(u)<\infty$ for large $u>0$}\ 
\textrm{and the limit $q:=\lim_{u\to\infty}f'(u)F(u)$ exists.}\tag{F1}
\end{multline}
By (\ref{F1}) we see that $F$ is defined on $(0,\infty)$ and $0<F(0)\le\infty$.
The inverse function $F^{-1}(u)$ exists, because $F(u)$ is strictly decreasing.
When $f\in C^2$, by L'Hospital's role we formally have
\[
q=\lim_{u\to\infty}\frac{F(u)}{1/f'(u)}=\lim_{u\to\infty}\frac{F(u)'}{(1/f'(u))'}=\lim_{u\to\infty}\frac{f'(u)^2}{f(u)f''(u)}.
\]
The growth rate of $f$ can be defined by $p:=\lim_{u\to\infty}uf'(u)/f(u)$.
We formally obtain
\[
p=\lim_{u\to\infty}\frac{(u)'}{(f(u)/f'(u))'}=\lim_{u\to\infty}\frac{1}{1-\frac{f(u)f''(u)}{f'(u)^2}}=\frac{q}{q-1},
\ \ \textrm{and hence}\ \frac{1}{p}+\frac{1}{q}=1.
\]
The exponent $q$, which was introduced in Dupaigne-Farina~\cite{DF10}, is the conjugate of the growth rate of $f$.
The algebraic growth corresponds to $q>1$, while the exponential growth corresponds to $q=1$.
We will show in Section~2 that $q\ge 1$ if $q$ exists.

Let us consider the classical case $\theta=2$, i.e., 
\begin{equation}\label{S1E2}
\begin{cases}
\partial_tu-\Delta u=f(u) & \textrm{in}\ \RN\times (0,T),\\
u(x,0)=\phi(x) & \textrm{in}\ \RN.
\end{cases}
\end{equation}
Weissler~\cite{W80} started studying the solvability of (\ref{S1E2}) with possibly unbounded and sign-changing initial data $\phi\in L^r(\RN)$ and found the critical exponent $N(p-1)/2$ as described in Proposition~\ref{S1P1}.
In the model case $f(u)=|u|^{p-1}u$, $p>1$, the solvability in \cite{W80} can be summarized as follows:
\begin{proposition}\label{S1P1}
Let $N\ge 1$.
Assume that $f(u)=|u|^{p-1}u$, $p>1$.
The following hold:\\
(i)(Existence, subcritical case) Assume $r\ge 1$ and $r>{N(p-1)}/2$. The problem (\ref{S1E2}) has a local-in-time solution for $\phi\in L^r(\RN)$.\\
(ii)(Existence, critical case) Assume $r={N(p-1)}/2>1$. The problem (\ref{S1E2}) has a local-in-time solution for $\phi\in L^r(\RN)$.\\
(iii)(Nonexistence, supercritical case) For each $1\le r<N(p-1)/2$, there is $\phi\in L^r(\RN)$ such that (\ref{S1E2}) has no local-in-time nonnegative solution.
\end{proposition}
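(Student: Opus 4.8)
The plan is to treat existence (parts (i)--(ii)) and nonexistence (iii) by completely different mechanisms. Throughout I write $\|\cdot\|_q$ for the $L^q(\RN)$ norm and use the mild (integral) formulation
\[
u(t)=e^{t\Delta}\phi+\int_0^t e^{(t-s)\Delta}f(u(s))\,ds,
\]
together with the heat-semigroup smoothing estimates $\|e^{t\Delta}\psi\|_q\le Ct^{-\frac N2(\frac1\mu-\frac1q)}\|\psi\|_\mu$, $1\le\mu\le q\le\infty$, and the elementary identity $\|\,|u|^p\,\|_{q/p}=\|u\|_q^p$. For part (i), I would fix $q\ge\max\{p,r\}$ with $q>N(p-1)/2$, chosen close enough to this maximum that $\frac{Np}{2}(\frac1r-\frac1q)<1$ --- such a $q$ exists precisely because $r\ge1$ and $r>N(p-1)/2$ --- and set $\alpha=\frac N2(\frac1r-\frac1q)$. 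In the complete metric space $X_T=\{u\in C((0,T];L^q(\RN)):\sup_{0<t<T}t^\alpha\|u(t)\|_q<\infty\}$ the two smoothing estimates combine (via a Beta-function integral) to give, for the Duhamel map $\Phi$, the bounds $\|\Phi(u)\|_{X_T}\le C\|\phi\|_r+CT^\delta\|u\|_{X_T}^p$ and $\|\Phi(u)-\Phi(v)\|_{X_T}\le CT^\delta(\|u\|_{X_T}^{p-1}+\|v\|_{X_T}^{p-1})\|u-v\|_{X_T}$ with $\delta=1+\alpha(1-p)-\frac{N(p-1)}{2q}$, and a short computation shows $\delta>0$ exactly when $r>N(p-1)/2$. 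Hence $\Phi$ is a contraction on a small ball of $X_T$ once $T$ is small (depending on $\|\phi\|_r$), and standard estimates upgrade the fixed point to $u\in C([0,T];L^r)$. (When $r\ge p$ one may simply take $q=r$, $\alpha=0$ and argue in $C([0,T];L^r)$.)

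For the critical case (ii), $r=N(p-1)/2>1$, the same scheme is used, but now $q$ is taken strictly between $\max\{p,r\}$ and $pr$ --- an interval which is nonempty precisely because $r>1$ --- and one checks that $\delta=0$, so smallness no longer comes from $T^\delta$. The essential new ingredient, and in my view the real content of (ii), is that $\sup_{0<t<T}t^\alpha\|e^{t\Delta}\phi\|_q\to0$ as $T\to0^+$ whenever $\phi\in L^r$ with $r$ critical. This follows by density: the quantity is $\le Ct^\alpha\|\phi\|_q\to0$ when $\phi\in L^r\cap L^q$, the map $\phi\mapsto\sup_{0<t<T}t^\alpha\|e^{t\Delta}\phi\|_q$ is bounded on $L^r$ uniformly in $T$, and $L^\infty\cap L^r$ (hence $L^r\cap L^q$) is dense in $L^r$. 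Granting this, one first fixes the ball radius $\rho$ by $C\rho^{p-1}\le\tfrac12$ and then chooses $T$ so small that $C\sup_{0<t<T}t^\alpha\|e^{t\Delta}\phi\|_q\le\rho/2$, so that $\Phi$ maps the ball into itself and contracts.

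For the nonexistence statement (iii), given $1\le r<N(p-1)/2$ I would exhibit the explicit singular datum $\phi(x)=|x|^{-a}$ for $|x|\le1$ and $\phi(x)=0$ for $|x|>1$, with $\tfrac{2}{p-1}<a<\tfrac Nr$; such an $a$ exists \emph{exactly} in the supercritical range, and then $\phi\in L^r(\RN)$ because $ar<N$ (note $p>1+2/N$ here, so $a<N$ and $e^{t\Delta}\phi$ is finite). Any nonnegative mild solution $u$ dominates, by positivity and induction on the Duhamel formula, the iterates $v_0(t)=e^{t\Delta}\phi$, $v_{n+1}(t)=\int_0^t e^{(t-s)\Delta}v_n(s)^p\,ds$; indeed $u(t)\ge v_{n+1}(t)$ pointwise whenever $u(t)\ge v_n(t)$. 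A direct heat-kernel computation gives $e^{s\Delta}\phi(y)\ge c\min\{s^{-a/2},|y|^{-a}\}$ near the origin for small $s$, and propagating this through the iteration --- for as long as the defining integrals converge --- yields $v_n(s,y)\ge c_n\min\{s^{-b_n/2},|y|^{-b_n}\}$ with $b_{n+1}=pb_n-2$, hence $b_n=p^n\bigl(a-\tfrac{2}{p-1}\bigr)+\tfrac{2}{p-1}\to\infty$. As soon as $n$ is large enough that $pb_n\ge N+2$, the time integral defining $v_{n+1}$ diverges on a neighbourhood of the origin, so $v_{n+1}(t,\cdot)\equiv+\infty$ there; since $u\ge v_{n+1}$, no a.e.-finite $u$ can exist. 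I expect this iteration to be the main obstacle: one must control the constants $c_n$, handle the fact that the cutoff destroys exact self-similarity of the $v_n$, and prove carefully that the integral for $v_{n+1}$ is genuinely $+\infty$ once $pb_n\ge N+2$ rather than merely more singular. (For the sub-range $r<Np/(N+2)$ this is much easier --- one can take $a$ with $pa\ge N+2$ directly, so already $v_1\equiv+\infty$ --- and when $p\ge N/(N-2)$ one may instead compare with the singular stationary supersolution $c|x|^{-2/(p-1)}$; the iteration argument's virtue is that it covers the whole range $p>1+2/N$.)
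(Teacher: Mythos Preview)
The paper does not actually prove Proposition~\ref{S1P1}: it is quoted as a summary of Weissler's results~\cite{W80} and serves only as motivation, so there is no ``paper's own proof'' to compare against directly. Your proposal is essentially correct and follows the classical Weissler/Brezis--Cazenave line, which is indeed how \cite{W80} and \cite{BC96} proceed.

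For (i)--(ii) your Kato-type contraction argument is standard and sound; the computation $\delta=1-\tfrac{N(p-1)}{2r}$ (independent of the auxiliary $q$) is the right one, and the density trick you invoke in (ii) is precisely \cite[Lemma~8]{BC96}, which the present paper reproduces as Proposition~\ref{S2S2P2}. Note that your fixed-point approach handles sign-changing $\phi\in L^r$, as the proposition requires, whereas the paper's own existence machinery for its main theorems (monotone iteration below an explicit supersolution, Lemma~\ref{S2L-1}) needs $\phi\ge0$; so for the statement as written, the contraction route is in fact the appropriate one.

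For (iii) your iteration scheme can be completed, but there is a much cleaner route that the paper uses for its own nonexistence results (Section~4). Proposition~\ref{S4P1} says that any nonnegative integral solution on $(0,T)$ forces the a~priori bound $\|S(t)\phi\|_\infty\le F^{-1}(t)=((p-1)t)^{-1/(p-1)}$; the proof is a one-line Jensen argument applied to $s\mapsto S(T-s)u(s)$. With your datum $\phi=|x|^{-a}\chi_{\{|x|\le1\}}$, a single heat-kernel lower bound already gives $S(t)\phi(0)\ge ct^{-a/2}$, and since $a>2/(p-1)$ this violates the necessary condition for small $t$. This replaces your entire iteration $b_{n+1}=pb_n-2$ and sidesteps all the bookkeeping of constants $c_n$, shrinking domains, and cutoff effects that you rightly flagged as the main obstacle.
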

Let $u_{\lambda}(x,t):=\lambda^{2/(p-1)}u(\lambda x,\lambda^2t)$.
When $u(x,t)$ satisfies the equation in (\ref{S1E2}) with $f(u)=|u|^{p-1}u$, the function $u_{\lambda}(x,t)$ also satisfies the same equation.
We see that $$\left\|u_{\lambda}(\,\cdot\,,0)\right\|_{L^r(\RN)}=\left\|u(\,\cdot\,,0)\right\|_{L^r(\RN)},\quad\lambda>0$$
 if and only if $r=N(p-1)/2$.
Proposition~\ref{S1P1} indicates that $L^{N(p-1)/2}(\RN)$ becomes a borderline space for the solvability of the equation.
The problem (\ref{S1E2}) has been studied by many authors.
See \cite{AD91,BC96,G86,LRSV16,RS13,W80,W86} for example.
The reader can consult Quittner-Souplet~\cite{QS07} and references therein.

Hereafter, let $L^r(\Omega)$, $1\le r\le\infty$, denote the usual Lebesgue space on the domain $\Omega$.
We write $\left\|u\right\|_r=\left\|u\right\|_{L^r(\Omega)}$ for simplicity when $\Omega=\RN$.
In order to deal with singular and nondecaying functions we define uniformly local $L^r$ spaces:
\[
L^r_{\rm{ul},\rho}(\RN):=\left\{ u\in L^1_{\rm{loc}}(\RN)\left|\ \left\|u\right\|_{L^r_{\rm{ul},\rho}(\RN)}<\infty\right.\right\}.
\]
Here, $\rho>0$, $B_y(\rho):=\{x\in\RN|\ |x-y|<\rho\}$ and
\[
\left\|u\right\|_{L^r_{\rm{ul},\rho}(\RN)}:=
\begin{cases}
\sup_{y\in\RN}\left(\int_{B_y(\rho)}|u(x)|^rdx\right)^{1/r} & \textrm{if}\ \ 1\le r<\infty,\\
{\rm{esssup}}_{y\in\RN}\left\| u\right\|_{L^{\infty}(B_y(\rho))} & \textrm{if}\ \ r=\infty.
\end{cases}
\]
We easily see that $L^{\infty}_{\rm{ul},\rho}(\RN)=L^{\infty}(\RN)$ and that $L^{r_1}_{\rm{ul},\rho}(\RN)\subset L^{r_2}_{\rm{ul},\rho}(\RN)$ if $1\le r_2\le r_1\le\infty$.
We define $\calL^r_{\rm{ul},\rho}(\RN)$ by
\[
\calL^r_{\rm{ul},\rho}(\RN):=
\overline{BUC(\RN)}^{\|\,\cdot\,\|_{L^r_{\rm{ul},\rho}(\RN)}},
\]
i.e., $\calL^r_{\rm{ul},\rho}(\RN)$ denotes the closure of the space of bounded uniformly continuous functions $BUC(\RN)$ in the space $L^r_{\rm{ul},\rho}(\RN)$.
By working in $L^r_{\rm{ul},\rho}(\RN)$ (or $\calL^r_{\rm{ul},\rho}(\RN)$) instead of $L^r(\RN)$ we can focus on the relationship between the singularity of $\phi$ and the solvability, and eliminate the effect of the behavior of $\phi$ near space infinity.

Fujishima-Ioku~\cite{FI18} studied the solvability of (\ref{S1E2}) in $L^r_{\rm{ul},\rho}(\RN)$ with nonnegative initial data $\phi$.
Specifically, they obtained the following:
\begin{proposition}\label{S1P2}
Let $N\ge 1$ and $\theta=2$.
Assume that $f$ satisfies (\ref{F1}) with $q\ge 1$ and that $\phi\ge 0$.
Then the following hold:\\
(i)(Existence, subcritical case) Assume that $r>N/2$ and $r\ge q-1$ and that $f'(u)F(u)\le q$ for large $u>0$. If $F(\phi)^{-r}\in L^1_{\rm{ul},\rho}(\RN)$, then (\ref{S1E1}) has a local-in-time solution.\\
(ii)(Existence, critical case) Assume that $r=N/2>q-1$ and that $f'(u)F(u)\le q$ for large $u>0$. If $F(\phi)^{-r}\in \calL^1_{\rm{ul},\rho}(\RN)$, then (\ref{S1E1}) has a local-in-time solution.\\
(iii)(Nonexistence, supercritical case) Assume that $f$ is convex and $q-1<N/2$. For any $r\in[q-1,N/2)$ if $q>1$ or $r\in (0,N/2)$ if $q=1$, there is a nonnegative $\phi$ such that $F(\phi)^{-r}\in L^1_{\rm{ul},\rho}(\RN)$ and (\ref{S1E1}) has no local-in-time nonnegative solution.
\end{proposition}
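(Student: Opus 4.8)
The plan is to prove the existence parts (i), (ii) by constructing a supersolution of (\ref{S1E1}) (here with $\theta=2$) and running a monotone iteration, and the nonexistence part (iii) by deriving an a priori pointwise lower bound on any solution and then exhibiting an initial datum that violates it. For existence, set $u_0\equiv0$ and $u_{k+1}(t):=e^{t\Delta}\phi+\int_0^t e^{(t-s)\Delta}f(u_k(s))\,ds$; since $f$ is nondecreasing this sequence is nondecreasing, so it suffices to produce a supersolution $\bar u$ — i.e. a function with $\bar u(\cdot,0)\ge\phi$ and $\bar u(t)\ge e^{t\Delta}\bar u(0)+\int_0^t e^{(t-s)\Delta}f(\bar u(s))\,ds$ — that is finite on $\RN\times(0,T)$ for some $T>0$; then $u_k\le\bar u$ for all $k$ and $u:=\lim_k u_k$ is a local-in-time solution. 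I would seek $\bar u$ in the form $\bar u=F^{-1}(\Theta)$, where $\Theta$ is built from $e^{t\Delta}$ applied to a power of $F(\phi)^{-1}$ (the exponent dictated by $q$, via $1/p+1/q=1$; when $q=1$ a logarithmic or exponential transform replaces the power), together with multiplicative constants and an $O(t)$ correction. Differentiating the composition gives $\bar u_t-\Delta\bar u=f(\bar u)\bigl(-\Theta_t+\Delta\Theta-f'(\bar u)|\nabla\Theta|^2\bigr)$, so $\bar u$ is a supersolution as soon as $\Theta_t-\Delta\Theta\le-1-f'(F^{-1}(\Theta))|\nabla\Theta|^2$; the hypothesis $f'(u)F(u)\le q$ for large $u$ — equivalently $f'(F^{-1}(w))\le q/w$ for small $w$ — together with the convexity of $F$ and $F^{-1}$ and Jensen's inequality is precisely what lets the gradient term be absorbed into the linear part, while the $-1$ is produced by the $O(t)$ correction. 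Designing $\Theta$ so that this differential inequality closes is the crux of the existence proof.

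The finiteness of $\bar u$ on a short interval is where the $L^p$--$L^q$ smoothing estimates in uniformly local spaces enter: for $\psi$ a power of $F(\phi)^{-1}$ one uses $\|e^{t\Delta}\psi\|_{L^\infty_{\rm{ul},\rho}(\RN)}\le C\,t^{-N/(2r)}\|\psi\|_{L^r_{\rm{ul},\rho}(\RN)}$, and $\psi\in L^r_{\rm{ul},\rho}(\RN)$ because $F(\phi)^{-r}\in L^1_{\rm{ul},\rho}(\RN)$; one then needs $\Theta(x,t)$ to stay in the domain of $F^{-1}$ (e.g. $\Theta>0$) on $(0,T]$, which translates into a bound on $e^{t\Delta}\psi$. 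In the subcritical case $r>N/2$ the exponent $-N/(2r)$ exceeds $-1$, the required bound holds on a fixed short interval, one checks $\bar u(\cdot,0)\ge\phi$, and concludes. The critical case $r=N/2$ is the \emph{main obstacle}: there $-N/(2r)=-1$, the estimate is borderline, and one must use $F(\phi)^{-r}\in\calL^1_{\rm{ul},\rho}(\RN)$, i.e. approximability by $BUC(\RN)$ functions — write $F(\phi)^{-r}=g_1+g_2$ with $g_1\in BUC(\RN)$ and $\|g_2\|_{L^1_{\rm{ul},\rho}(\RN)}$ as small as desired; the $g_1$-part is harmless because $e^{t\Delta}g_1$ stays bounded, and the smallness of $g_2$ is exactly what lets the construction close on a (possibly small) interval $[0,T]$. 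Pinning down the constants and the length of $T$ in this borderline regime is the technical heart of (ii).

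For nonexistence, suppose (\ref{S1E1}) (with $\theta=2$) has a nonnegative solution $u$ on $\RN\times(0,T)$. Since $F''=f'/f^2\ge0$, $F$ is convex and decreasing, and a direct computation — valid classically on $\RN\times(\tau,T)$ by parabolic regularity, then letting $\tau\to0^+$ — gives $\partial_t F(u)-\Delta F(u)=-1-F''(u)|\nabla u|^2\le-1$; by the comparison principle (applied to the bounded functions $F(u)$ and $e^{t\Delta}[F(\phi)]-t$) one gets $F(u(\cdot,t))\le e^{t\Delta}[F(\phi)]-t$, hence $u(x,t)\ge F^{-1}\bigl(e^{t\Delta}[F(\phi)](x)-t\bigr)$. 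In particular, finiteness of $u$ forces $e^{t\Delta}[F(\phi)](x)>t$ for a.e. $x$ and every $t\in(0,T)$ (the limit $\tau\to0^+$ uses dominated convergence together with Jensen's inequality $F(e^{\tau\Delta}\phi)\le e^{\tau\Delta}[F(\phi)]$; it is immediate when $F(0)<\infty$, and when $F(0)=\infty$ one chooses $\phi\ge1$, so that $u(\tau)\ge e^{\tau\Delta}\phi\ge1$ and $F(u(\tau))\le F(1)$). Now, since $r<N/2$, fix $r_0\in(r,N/2)$ and take $\phi$ with $F(\phi)(x)=|x|^{N/r_0}$ for $|x|$ small and $\phi$ bounded away from $0$ and $\infty$ (and $\ge1$ when $q=1$) elsewhere: then $F(\phi)^{-r}=|x|^{-Nr/r_0}\in L^1_{\rm{loc}}(\RN)$ since $r<r_0$, so $F(\phi)^{-r}\in L^1_{\rm{ul},\rho}(\RN)$, and the conditions $r\ge q-1$ (resp. $r>0$ when $q=1$) are what make $F(\phi)$ well defined and finite off the singularity, and $\phi\in L^1_{\rm{loc}}(\RN)$. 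A scaling of the Gaussian integral gives $e^{t\Delta}[F(\phi)](0)=C\,t^{N/(2r_0)}(1+o(1))$ as $t\to0^+$, with an exponentially small contribution from $\{|x|$ large$\}$; since $r_0<N/2$ one has $N/(2r_0)>1$, so $e^{t\Delta}[F(\phi)](0)=o(t)$, contradicting $e^{t\Delta}[F(\phi)](0)\ge t$ — the latter holding at $x=0$ by continuity of $e^{t\Delta}[F(\phi)]$ for $t>0$. Hence no local-in-time nonnegative solution exists. The point to watch here is the rigorous justification of $F(u(t))\le e^{t\Delta}[F(\phi)]-t$ for integral (mild) rather than classical solutions, and in particular the passage $\tau\to0^+$ when $F(0)=\infty$.
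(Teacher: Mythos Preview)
Your plan is sound and is essentially that of Fujishima--Ioku~\cite{FI18}, from which Proposition~\ref{S1P2} is quoted (not proved) in this paper. For existence you build a classical supersolution through the substitution $\Theta=F(\bar u)$, reducing matters to the differential inequality $\Theta_t-\Delta\Theta\le-1-f'(F^{-1}(\Theta))|\nabla\Theta|^2$, with the gradient term absorbed via $f'F\le q$; the $\calL^1_{\rm ul}$ splitting $F(\phi)^{-r}=g_1+g_2$ with $g_1\in BUC$ is exactly the right device for the critical case~(ii). For nonexistence you obtain $e^{t\Delta}[F(\phi)](x)\ge t$ from $\partial_tF(u)-\Delta F(u)\le-1$ and violate it at $x=0$ with $F(\phi)(x)=|x|^{N/r_0}$, $r<r_0<N/2$, via the scaling $e^{t\Delta}[|x|^{N/r_0}](0)=Ct^{N/(2r_0)}=o(t)$. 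The only step that warrants more detail in a full write-up is the comparison $F(u(t))\le e^{t\Delta}[F(\phi)]-t$ for \emph{integral} (not classical) solutions on $\RN$, together with the passage $\tau\to0^+$; your Jensen bound $F(u(\tau))\le e^{\tau\Delta}[F(\phi)]$ is the right ingredient there.

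It is worth contrasting this with what the present paper actually proves. Theorems~\ref{A} and~\ref{B} (Sections~3--4) recover parts~(i) and~(iii) of Proposition~\ref{S1P2} when $\theta=2$, but by a deliberately \emph{different} route: the supersolution (\ref{SS}), (\ref{SS3}) is verified directly at the level of the integral inequality $\calF[\bar u]\le\bar u$, using Jensen's inequality (Proposition~\ref{P1}) and $L^\alpha$--$L^\beta$ smoothing, with no differentiation and no $|\nabla\Theta|^2$ term at all. The point of this detour is that it carries over to $(-\Delta)^{\theta/2}$ for $0<\theta<2$, where your change-of-variables computation has no analogue; for $\theta=2$ your approach is more direct. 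On the nonexistence side the paper uses the sharper necessary condition $\|S(t)\phi\|_\infty\le F^{-1}(t)$ (Proposition~\ref{S4P1}) and then needs Lemmas~\ref{S4L3}--\ref{S4L3+} to extract a contradiction; your condition $e^{t\Delta}[F(\phi)](0)\ge t$ is weaker (it follows from the paper's via $F(S(t)\phi)\le S(t)[F(\phi)]$) but yields the contradiction in one line. Finally, the paper does \emph{not} reprove the critical case~(ii) for general $f$ --- only for $f(u)=u^p$ and $e^u$ in Section~5 --- so your treatment of~(ii) genuinely goes beyond what is shown here.
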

Note that $F(u)^{-r}=1/(F(u)^r)$, while $F^{-1}(u)$ stands for the inverse function of $F(u)$.
Because of the existence of the $q$ exponent in (\ref{F1}), the equation (\ref{S1E1}) with $\theta=2$ is approximately invariant under the quasi-scaling
\[
u_{\lambda}(x,t):=F^{-1}\left(\lambda^{-2}F(u(\lambda x,\lambda^2t))\right),\ \ \lambda>0
\]
and the invariance
\[
\int_{\RN}F(u_{\lambda}(x,0))^{-N/2}dx=\int_{\RN}F(u(x,0))^{-N/2}dx,\ \ \lambda>0
\]
gives the borderline set in Proposition~\ref{S1P2}, which may not be a space.
If $f(u)=u^p$, then $F(u)^{-N/2}=(p-1)^{N/2}u^{N(p-1)/2}$, and hence Proposition~\ref{S1P2} is a generalization of Proposition~\ref{S1P1} for nonnegative initial data.
The leading term is not necessarily a pure power, and it can grow more rapidly than the single exponential function.
Two nonlinearities $f(u)=u^p(\log(u+1))$ and $e^{u^p}$, $p\ge 1$,  are included as examples.
See Example~2.2 of the present paper.
In \cite{FI18} the authors introduced interesting changes of variables \cite[Eqs (1.18), (1.19)]{FI18} which is denoted by $\tilde{u}=T(u)$. 
Using these changes of variables, one can transform (\ref{S1E2}) not exactly but approximately into one of the canonical two equations:
\begin{equation}\label{fq}
\partial_t\tu-\Delta \tu=f_q(\tu),\ \ \textrm{where}\ \ f_q(u):=
\begin{cases}
u^p,\ \ \frac{1}{p}+\frac{1}{q}=1, & \textrm{if}\ q>1,\\
e^u & \textrm{if}\ q=1.
\end{cases}
\end{equation}
See (\ref{FI}) for the exact equation for $\tu$.
This method does not work in the case (\ref{S1E1}), because it is difficult to obtain a relationship between $(-\Delta)^{\theta/2}u$ and $(-\Delta)^{\theta/2}\tilde{u}$.
Therefore, a generalization of Proposition~\ref{S1P2} to nonlocal equations is not obvious.

Let us go back to fractional equations.
We focus on the local-in-time solution with unbounded initial data.
By \cite[Theorem~3]{W80} we easily conclude an existence theorem corresponding to Proposition~\ref{S1P1}~(i) and (ii) when $0<\theta\le 2$.
Li~\cite{L17} considered (\ref{S1E1}) with $1<\theta<2$ when $f$ is continuous and nondecreasing and $\phi\in L^r(\RN)$ is nonnegative.
He showed, among other things, that, for $1<\theta<2$ and $1<r<\infty$, (\ref{S1E1}) has a local-in-time solution if and only if $\limsup_{s\to 0}f(s)/s<\infty$ and $\lim_{s\to\infty}s^{-(1+\theta r/N)}f(s)<\infty$.
Therefore, for $1<r<\infty$, the problem (\ref{S1E1}) with $f(u)=u^p$, $p>1$, has a local-in-time solution if and only if $r\ge N(p-1)/\theta$.
His method was based on Laister {\it et~al.}$\,$\cite{LRSV16} which obtained necessary and sufficient conditions on $f$ for the existence of a local-in-time nonnegative solution for (\ref{S1E1}) with $\theta=2$.
For $1<\theta\le 2$ and $1\le r<\infty$,  Li~\cite{L17b} constructed a nonnegative nondecreasing nonlinear term $f$ and a nonnegative initial data $\phi\in L^r(\RN)$ such that $\int_1^{\infty}ds/f(s)=\infty$ and (\ref{S1E1}) has no local-in-time solution in $L^1_{\rm{loc}}(\RN)$.
Hisa-Ishige~\cite{HI18} studied (\ref{S1E1}) with $f(u)=u^p$ when initial data $\phi$ is a Radon measure.
They obtained necessary conditions and sufficient conditions for a local-in-time existence.
Moreover, the authors obtained an optimal singularity which depends on $p>1$.
For example, they showed that for $\phi(x)=\gamma|x|^{-\theta/(p-1)}$ and $p>1+\theta/N$, there is $\gamma^*>0$ such that (\ref{S1E1}) with $f(u)=u^p$ has (resp. does not have) a local-in-time solution if $0\le\gamma<\gamma^*$ (resp. $\gamma>\gamma^*$).
Let $L^{N(p-1)/\theta,\infty}(\RN)$ denote a Lorentz space, which we do not define in the present paper.
It is known that
\[
\begin{tabular}{ccccc}
$L^{{N(p-1)}/{\theta}}(\RN)$ & $\subset$ & $L^{{N(p-1)}/{\theta},\infty}(\RN)$ & $\subset$ & $L_{\rm{loc}}^{{N(p-1)}/{\theta}-\e}(\RN)$ $(\e>0\ \textrm{is small.})$, \\
\rotatebox[origin=c]{90}{$\not\in$} & & \rotatebox[origin=c]{90}{$\in$} && {}\\
$|x|^{-\frac{\theta}{p-1}}$ && $|x|^{-\frac{\theta}{p-1}}$ &&  
\end{tabular}
\]
which describes the borderline property of the singularity of $|x|^{-\theta/(p-1)}$ in $L^{N(p-1)/\theta}(\RN)$.
See Furioli {\it et~al.}$\,$\cite{FKRT17} for the case $f(u)=|u|^{\frac{r\theta}{N}}ue^{u^r}$ in the Orlicz space $\exp L^r(\RN)$.
Note that $\exp L^r(\RN)\subset L^q(\RN)$ for every $q\in[r,\infty)$.

Let us define a solution of (\ref{S1E1}) in $L^r_{\rm{ul},\rho}(\RN)$.
The linear fractional heat equation
\begin{equation}\label{S1E4}
\partial_tu+(-\Delta)^{\theta/2}u=0
\end{equation}
admits a fundamental solution $G$.
We recall various properties of $G$ in Section~2.
Because of the decay estimate (\ref{Gdecay}), the following definition of a solution of (\ref{S1E4}) with initial data $\phi$ becomes well-defined: For $\phi\in L^1_{\rm{ul},\rho}(\RN)$,
\[
S(t)[\phi](x):=\int_{\RN}G(x-y,t)\phi(y)dy,\ \ x\in\RN,\ t>0.
\]
For the optimal class of the positive initial data for $S(t)$, see Bonforte {\it et~al.}$\,$\cite{BSV17}.
\begin{definition}[Integral solution]\label{S1D1}
Let $u$ and $\bu$ be nonnegative measurable functions on $\RN\times (0,T)$.
We call $u$ a solution of (\ref{S1E1}) if $u$ satisfies the integral equation
\[
\infty>u(t)=\calF[u](t)\ \ \textrm{a.e.}\ x\in\RN,\ \ 0<t<T,
\]
where
\[
\calF[u](t):=S(t)\phi+\int_0^tS(t-s)f(u(s))ds.
\]
We call $\bu$ a supersolution of (\ref{S1E1}) if $\bu$ satisfies the integral inequality
\[
\calF[\bu](t)\le\bu(t)\ \ \textrm{a.e.}\ x\in\RN,\ \ 0<t<T.
\]
\end{definition}
Note that a mild solution is an integral solution defined by Definition~\ref{S1D1}.
See \cite[p. 78]{QS07}.
In this paper we do not pursue in which function space the solution $u(t)$ converges to the initial data $\phi$ as $t\to 0$.

The first main result is about a power case.
\begin{theorem}[{\bf Algebraic growth {\boldmath $q>1$}}]\label{A}
Let $N\ge 1$ and $0<\theta\le 2$.
Assume that $f$ satisfies (\ref{F1}) with $q>1$ and that $\phi\ge 0$.
Then the following hold:\\
(i-1)(Existence, subcritical case 1) Assume that $r>N/\theta$ and $r>q-1$.
If $F(\phi)^{-r}\in L^1_{\rm{ul},\rho}(\RN)$, then (\ref{S1E1}) has a local-in-time solution in the sense of Definition~\ref{S1D1}.\\
(i-2)(Existence, subcritical case 2) Assume that $r>N/\theta$ and $r\ge q-1$ and that $f'(u)F(u)\le q$ for large $u>0$. If $F(\phi)^{-r}\in L^1_{\rm{ul},\rho}(\RN)$, then (\ref{S1E1}) has a local-in-time solution in the sense of Definition~\ref{S1D1}.\\
(ii)(Nonexistence, supercritical case) Assume that $f$ is convex. For any $r\in (0,N/\theta)$, there is $\phi\ge 0$ such that $F(\phi)^{-r}\in L^1_{\rm{ul},\rho}(\RN)$ and (\ref{S1E1}) has no local-in-time nonnegative solution in the sense of Definition~\ref{S1D1}.\\
\end{theorem}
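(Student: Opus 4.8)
\emph{Overview.} I would prove the existence statements (i-1), (i-2) by a monotone iteration based on a new explicit supersolution, and the nonexistence statement (ii) by first extracting a necessary condition for solvability from Jensen's inequality and then exhibiting initial data violating it. I assume freely the facts recorded in Section~2: the uniformly local $L^p$-$L^q$ smoothing estimates for $S(t)$, the scaling $G(x,t)=t^{-N/\theta}G(t^{-1/\theta}x,1)$ with $\int_{\RN}G(x,t)\,dx=1$, the convexity of $F$ and of $F^{-1}$ (both immediate from $f'\ge0$), and the asymptotics $F^{-1}(\sigma)\ge c\,\sigma^{-(q-1)}$ and $f(F^{-1}(\sigma))\le C\,\sigma^{-q}$ for small $\sigma$ (the latter under $f'(u)F(u)\le q$), together with their $\e$-perturbed versions available from $q=\lim f'(u)F(u)$.

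\emph{Existence.} Put $w:=F(\phi)^{-r}\in L^1_{\mathrm{ul},\rho}(\RN)$, $M:=\|w\|_{L^1_{\mathrm{ul},\rho}(\RN)}$, fix $\kappa>1$, and take as candidate supersolution
\[
\bu(x,t):=F^{-1}\!\big((\kappa\,S(t)w(x))^{-1/r}\big),\qquad 0<t<T .
\]
For $f(u)=u^p$ this reduces to the classical supersolution $\kappa^{1/((p-1)r)}(S(t)[\phi^{(p-1)r}])^{1/((p-1)r)}$, but it is built directly from $F$, with no change of variables. Using that a supersolution for larger initial data is one for smaller, we may assume $\phi\ge m$ with $m$ as large as we like, so that we only need $u\mapsto F(u)^{-r}$ convex for $u\ge m$; this holds because $f'(u)F(u)\to q\le r+1$, the borderline $r=q-1$ being covered precisely by the extra hypothesis $f'(u)F(u)\le q$ of case (i-2). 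Convexity of $F(u)^{-r}$ and Jensen's inequality give $S(t)w\ge F(S(t)\phi)^{-r}$, hence $(S(t)w)^{-1/r}\le F(S(t)\phi)$ and $\bu(t)\ge F^{-1}((S(t)w)^{-1/r})\ge S(t)\phi$. For the Duhamel term $D(t):=\int_0^tS(t-s)f(\bu(s))\,ds$, the bound $f(F^{-1}(\sigma))\le C\sigma^{-q'}$ ($q'=q$, or $q'$ slightly above $q$ in case (i-1)) yields $f(\bu(s))\le C'(1+(S(s)w)^{q'/r})$; one estimates $\int_0^tS(t-s)[(S(s)w)^{q'/r}]\,ds$ by Jensen's inequality when $q'\le r$ (concavity of $\sigma\mapsto\sigma^{q'/r}$ bounds the integrand by $(S(t)w)^{q'/r}$) and, when $q'>r$, by pulling out $\|S(s)w\|_{L^\infty}^{q'/r-1}\le(CMs^{-N/\theta})^{q'/r-1}$ and using the $L^1_{\mathrm{ul},\rho}$-$L^\infty$ estimate. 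Because $r>N/\theta$ and $r>q-1$ force $r>Nq/(N+\theta)$, the resulting time integrals converge, and for $T$ small $D(t)$ is dominated by the gap $\bu(t)-F^{-1}((S(t)w)^{-1/r})\ge(1-\kappa^{-1/r})(S(t)w)^{-1/r}f\big(F^{-1}((S(t)w)^{-1/r})\big)$ (mean value theorem and convexity of $F^{-1}$). Hence $\calF[\bu](t)=S(t)\phi+D(t)\le\bu(t)$ on $\RN\times(0,T)$. Finally the iterates $u_0:=S(\cdot)\phi$, $u_{n+1}:=\calF[u_n]$ are nondecreasing (since $\calF[u_0]\ge u_0$) and stay below $\bu$ (monotonicity of $\calF$), so by monotone convergence $u:=\lim_nu_n\le\bu<\infty$ a.e.\ solves (\ref{S1E1}) in the sense of Definition~\ref{S1D1}.

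\emph{Nonexistence.} The key is: if $u$ is an integral solution of (\ref{S1E1}) on $\RN\times(0,T)$ and $f$ is convex, then $F(S(t)\phi(x))>t$ for every $t\in(0,T)$ and a.e.\ $x$. Indeed Jensen's inequality ($f$ convex, $\int G(x,t)\,dx=1$) gives $S(\tau)[f(v)]\ge f(S(\tau)v)$, and one shows by induction on $n$ that $u(x,t)\ge V_n(x,t)$, where $V_0(x,t):=S(t)\phi(x)$ and $V_{n+1}(x,\tau):=S(t)\phi(x)+\int_0^\tau f(V_n(x,s))\,ds$; the step uses the identity $u(t')=S(t'-s)u(s)+\int_s^{t'}S(t'-\sigma)f(u(\sigma))\,d\sigma$, linearity of $S$, monotonicity of $f$, Jensen's inequality, and the elementary fact that $S(t-s)$ applied to the $n$-th Picard iterate with seed $S(s)\phi$ dominates the one with seed $S(t)\phi$. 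Letting $n\to\infty$, $V_n(x,\cdot)$ increases to the solution of $\partial_\tau V=f(V)$, $V(x,0)=S(t)\phi(x)$, which blows up at $\tau=F(S(t)\phi(x))$; so $F(S(t)\phi(x))\le t$ on a positive-measure set would force $u(x,t)=\infty$ there, contradicting Definition~\ref{S1D1}. Now fix $r\in(0,N/\theta)$ and set $\phi(x):=F^{-1}(\min(|x|^{\gamma/r},1))$, so $F(\phi)^{-r}=\min(|x|^{-\gamma},1)\in L^1_{\mathrm{ul},\rho}(\RN)$ whenever $\gamma<N$. If $q>1$ and $r<q-1$, choose $\gamma\in[Nr/(q-1),N)$: then $\phi\notin L^1_{\mathrm{loc}}(\RN)$, so $S(t)\phi\equiv+\infty$ and no integral solution exists. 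Otherwise ($r\ge q-1$, or $q=1$), choose $\gamma\in(\theta r,N)$ — nonempty precisely because $r<N/\theta$; then $\phi\in L^1_{\mathrm{loc}}$, and using the scaling of $G$ and $\int_{\RN}G(x,1)\,dx=1$ one gets $S(t)\phi(x)\ge c\,F^{-1}(C_0 t^{\gamma/(\theta r)})$ for $|x|<t^{1/\theta}$ and $t$ small, hence $F(S(t)\phi(x))\le C\,t^{\gamma/(\theta r)}<t$ since $\gamma/(\theta r)>1$. This contradicts the necessary condition on $B_0(t^{1/\theta})$ for all small $t>0$, so (\ref{S1E1}) has no local-in-time nonnegative solution.

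\emph{Where the difficulty lies.} The heart of the matter is the existence part: choosing $\bu$ and verifying $\calF[\bu]\le\bu$ while controlling $D(t)$ \emph{uniformly in the uniformly local norm} for general $f$. The essential subtlety is that $D(t)$ cannot be dominated by any function of $t$ alone — it must be kept comparable to $S(t)w$ times a positive power of $t$ — and the two Jensen regimes $q'\le r$ and $q'>r$ must be handled separately, the exponent bookkeeping collapsing exactly to the hypotheses $r>N/\theta$ and $r>q-1$ (resp.\ $r\ge q-1$ with $f'F\le q$). For nonexistence the only delicate point is keeping the sharp constant in the lower bound for $S(t)\phi$ — which is why one integrates against all of $G(\cdot,1)$ rather than a crude local bound — and this is what makes the entire range $r<N/\theta$, including the exponential case $q=1$, accessible.
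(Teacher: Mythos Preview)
Your argument is essentially correct, and for the existence part it takes a genuinely different route from the paper. The paper's supersolution is
\[
\bu(t)=F^{-1}\!\Bigl(\bigl((1+\sigma)S(t)\bigl[F(\phi_0)^{-(q_0-1)}\bigr]\bigr)^{-1/(q_0-1)}\Bigr),\qquad q_0=q+\delta,
\]
i.e.\ it puts the \emph{scaling} exponent $q_0-1\approx q-1$ inside $S(t)$, and then needs the technical Lemmas~3.2(i)(ii) to compare $F(\bu)^{-1/(p_0-1)+\e}$ with $\bu$ and $F(u/\sqrt{1+\sigma})$ with $(1+\sigma)^{p_0-1}F(u)$. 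Your supersolution $\bu=F^{-1}\bigl((\kappa S(t)[F(\phi)^{-r}])^{-1/r}\bigr)$ instead puts the \emph{integrability} exponent $r$ inside $S(t)$; the linear part $S(t)\phi\le\bu$ then follows directly from convexity of $F(\cdot)^{-r}$ (equivalent to $f'F\le r+1$, which is exactly where the dichotomy $r>q-1$ versus $r\ge q-1$ with $f'F\le q$ enters), and the Duhamel term is controlled by the two regimes $q'\lessgtr r$ that you describe. What your approach buys is a cleaner link between the supersolution and the hypothesis $F(\phi)^{-r}\in L^1_{\mathrm{ul},\rho}$, at the price of needing the two-sided bound $c\sigma^{-q''}\le f(F^{-1}(\sigma))\le C\sigma^{-q'}$ to compare $D(t)$ with the gap; the paper's approach buys a direct tie to the canonical equation $\tu_t+(-\Delta)^{\theta/2}\tu=\tu^{p_0}$, at the price of Lemmas~3.2(i)(ii).

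For nonexistence your scheme is the paper's: Proposition~4.1 plus the explicit data $\phi=F^{-1}(|x|^{\alpha})$ with $\theta<\alpha<N/r$. One step deserves a word of caution: from $S(t)\phi\ge c\,F^{-1}(C_0t^{\beta})$ with $\beta=\gamma/(\theta r)>1$ and a \emph{fixed} $c<1$ you cannot conclude $F(S(t)\phi)\le C\,t^{\beta}$ with the same exponent --- in general $F(cu)/F(u)\to\infty$. What does follow from the two-sided $\e$-asymptotics you invoke is $F(S(t)\phi)\le C\,t^{\beta(q-1-\e)/(q-1+\e)}$, whose exponent is still $>1$ for $\e$ small, so the contradiction with $F(S(t)\phi)>t$ stands; the paper achieves the same end by the substitution $t=F(s)$ together with the inequality $F(s)^{\beta}\le F(\beta s)$ of Lemma~4.2. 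Your remark about the sharp constant (integrating against all of $G(\cdot,1)$) is not actually needed for Theorem~A(ii) since $q>1$ gives polynomial control of $F^{-1}$; it is precisely in the exponential case $q=1$ (Theorem~B(ii), Lemma~4.3) that the coefficient $1-Ct^{\e\theta}\to1$ becomes essential.
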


\begin{remark}\label{R1}
(i) If $f(u)=u^p$, then $F(u)^{-N/\theta}=(p-1)^{N/\theta}u^{N(p-1)/\theta}$, and hence the critical exponent becomes $N(p-1)/\theta$, which was presented in \cite{HI18,L17b,W80}.
Hence, Theorem~\ref{A} is a generalization of Proposition~\ref{S1P2}~(i) and (iii) when $q>1$.\\
(ii) Even in the case $\theta=2$ Theorem~\ref{A} includes a new result.
The inequality $f'(u)F(u)\le q$ for large $u>0$ is assumed in \cite[Theorem~1.1]{FI18}, while Theorem~\ref{A}~(i-1) does not assume.
When $f(u)=(u+e)^p/(\log (u+e))$ ($1/p+1/q=1$), we see that $\lim_{u\to\infty}f'(u)F(u)=q$ and that $f'(u)F(u)>q$ for large $u>0$.
Theorem~\ref{A} (i-1) is applicable.
\\
(iii) The critical case $r=N/\theta>q-1$, which corresponds to Proposition~\ref{S1P2}~(ii), is not included in Theorem~\ref{A}.
However, if $f(u)=u^p$ or $e^u$, then in Section~5 we construct a local-in-time solution in $\calL^r_{\rm{ul},\rho}(\RN)$.
In particular, Theorem~\ref{S5T1} is an $L^r_{\rm{ul},\rho}(\RN)$ version of Proposition~\ref{S1P1} in the case $\phi\ge 0$.
When $f(u)=u^p$ and $\phi\ge 0$, Theorem~\ref{S5T1} gives a complete classification in $L^r_{\rm{ul},\rho}(\RN)$.\\
(iv) In Theorem~\ref{A}~(ii) we take $\phi\ge 0$ such that $\phi(x)=F^{-1}(|x|^{\alpha})$, $\theta<\alpha<N/r$, near the origin.
We can choose $\phi$ such that $\phi\in L^1_{\rm{ul},\rho}(\RN)$ if $r\ge q-1$.
\end{remark}

We consider the exponential growth case $q=1$.
In the next main theorem we assume the following:
\begin{equation}\label{F2}
\textrm{$f(u)$ is convex for large $u>0$, and $f'(u)F(u)\le 1$ for large $u>0$.}\tag{F2}
\end{equation}
\begin{theorem}[{\bf Exponential growth {\boldmath $q=1$}}]\label{B}
Let $N\ge 1$ and $0<\theta\le 2$.
Assume that $f$ satisfies (\ref{F1}) with $q=1$ and that $\phi\ge 0$.
Then the following hold:\\
(i)(Existence, subcritical case) Assume that $r>N/\theta$ and that (\ref{F2}) holds.
If $F(\phi)^{-r}\in L^1_{\rm{ul},\rho}(\RN)$, then (\ref{S1E1}) has a local-in-time solution in the sense of Definition~\ref{S1D1}.\\
(ii)(Nonexistence, supercritical case) Assume that $f(u)$ is convex for $u\ge 0$. For any $r\in (0,N/\theta)$, there is $\phi\ge 0$ such that $F(\phi)^{-r}\in L^1_{\rm{ul},\rho}(\RN)$ and (\ref{S1E1}) does not have a local-in-time nonnegative solution in the sense of Definition~\ref{S1D1}.
\end{theorem}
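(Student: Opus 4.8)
\emph{Proof proposal.} Both parts follow the usual dichotomy, carried out directly on the nonlocal equation (no change of variables): for (i) one constructs an explicit supersolution and then a solution below it by monotone iteration, and for (ii) one exhibits a sufficiently singular initial datum that obstructs every nonnegative solution.

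For (i), set $\Psi:=F(\phi)^{-r}\in L^1_{\mathrm{ul},\rho}(\RN)$ and $w(x,t):=S(t)\Psi$; by the $L^p$--$L^q$-type smoothing estimates of Section~2, $w(\cdot,t)$ is finite and bounded for $t>0$, with $\|w(\cdot,t)\|_{L^\infty}\le C(1+t^{-N/\theta})\|\Psi\|_{L^1_{\mathrm{ul},\rho}(\RN)}$. The two elementary consequences of (\ref{F2}) that drive the argument are: (a) since $f'(u)F(u)\le 1$ for large $u$, the map $v\mapsto v\,f(F^{-1}(v))$ is nondecreasing near $v=0$, hence $f(F^{-1}(v))\le Cv^{-1}$ there; and (b) the map $\psi\mapsto F(\psi)^{-r}$ is convex for large $\psi$, its second derivative being a positive multiple of $r+1-f'(\psi)F(\psi)$, which is positive since $f'(\psi)F(\psi)\le 1$. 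I would then take a supersolution of the form
\[
\bu(x,t):=F^{-1}\!\big(W(x,t)^{-1/r}\big),
\]
where $W$ is a suitable positive modification of $w$ carrying the right time weight --- concretely $W(x,t)=g(t)+w(x,t)$ with $g(t)$ a large constant or a multiple of $t^{-N/(\theta r)}$, normalised so that $\phi\le\bu(\cdot,0)$. By (b) and Jensen's inequality applied to the linear evolution, $F(S(t)\phi)^{-r}\le w(\cdot,t)\le W(\cdot,t)$, so $S(t)\phi\le\bu(t)$; by (a), $f(\bu(\cdot,t))\le C\,W(\cdot,t)^{1/r}$, so $\int_0^tS(t-s)f(\bu(s))\,ds$ is controlled, via the smoothing estimate, by $C\int_0^t\big(g(s)^{1/r}+s^{-N/(\theta r)}\|\Psi\|_{L^1_{\mathrm{ul},\rho}(\RN)}^{1/r}\big)ds$; this integral is finite precisely because $r>N/\theta$ and tends to $0$ with $t$, so for suitable $g$ and small $T'$ it is absorbed and $\calF[\bu](t)\le\bu(t)$ on $(0,T')$. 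Finally the iteration $u_0:=\calF[0]$, $u_{n+1}:=\calF[u_n]$ is nondecreasing, lies below $\bu$ by the comparison principle, and converges by monotone convergence to a finite integral solution of (\ref{S1E1}) on $(0,T')$.

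For (ii), I would choose $\phi$ with $\phi(x)=F^{-1}(|x|^{\alpha})$ near the origin (and $\phi$ bounded away from it), with $\theta<\alpha<N/r$; then $F(\phi)^{-r}=|x|^{-\alpha r}$ near $0$ belongs to $L^1_{\mathrm{ul},\rho}(\RN)$ since $\alpha r<N$, and $\phi$ can be kept in $L^1_{\mathrm{loc}}(\RN)$ as in Remark~\ref{R1}(iv). Assume, for contradiction, a nonnegative solution $u$ of (\ref{S1E1}) on $\RN\times(0,T)$; by Definition~\ref{S1D1}, $u(t)\ge S(t)\phi$ and $u(t)\ge\int_0^tS(t-s)f(u(s))\,ds$. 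Using that $\phi$ is radially decreasing near $0$ and the pointwise lower bounds for $G$ (Section~2), I would first obtain $S(s)\phi(x)\ge c\,F^{-1}(C s^{\alpha/\theta})$ for $|x|\le c_0 s^{1/\theta}$ and $s$ small, insert it into the Duhamel lower bound to get a first pointwise lower bound for $u$ near the origin that is more singular than $\phi$ as $t\to0$, and then feed this bound repeatedly back into $u(t)\ge\int_0^tS(t-s)f(u(s))\,ds$, using the global convexity of $f$ (hence its superlinear growth) and Jensen's inequality. The singularity of the iterated lower bounds increases at each step, so after finitely many iterations the $s$-integral diverges, forcing $u(x,t)=+\infty$ on a set of positive measure for every small $t>0$ --- contradicting $u<\infty$ a.e.

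I expect the crux of (i) to be the exact choice of the time weight $g$ (equivalently of $W$) in the supersolution when $q=1$: because $f$ outgrows every polynomial, the interplay between (a), (b) and the uniformly local smoothing estimates is tight, and one must check that $\int_0^tS(t-s)f(\bu(s))\,ds$ is genuinely absorbed into $\bu(t)-S(t)\phi$ uniformly in $x$, with constants and $T'$ depending on $\phi$ only through $\|F(\phi)^{-r}\|_{L^1_{\mathrm{ul},\rho}(\RN)}$. In (ii), the delicate point is the quantitative bookkeeping of the bootstrap: showing that the exponent governing the singularity of the successive lower bounds grows by a fixed positive amount per step, so that finitely many iterations suffice.
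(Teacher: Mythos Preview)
Your overall architecture for (i) --- construct a supersolution via Jensen's inequality and smoothing, then run monotone iteration --- matches the paper's. But your specific supersolution and the absorption step diverge from the paper's in a way that leaves a real gap.

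The paper does \emph{not} work with the power transformation $\psi\mapsto F(\psi)^{-r}$. For $q=1$ it uses the logarithmic map $\Phi_1(\psi)=-\log F(\psi)$ (which is convex precisely because $f'F\le 1$) and takes
\[
\bu(t)=F^{-1}\Bigl(e^{-\sigma}\exp\bigl(S(t)[\log F(\phi_0)]\bigr)\Bigr),\qquad \phi_0=\max\{\phi,u_0\}.
\]
Jensen then gives $S(t)\phi_0\le F^{-1}\bigl(e^{\sigma}F(\bu)\bigr)$, and the key technical lemma (their Lemma~\ref{S32L1}) converts this into a \emph{quantified} pointwise gap: $S(t)\phi_0\le \bu-C_1F(\bu)^{\alpha}$ with $\alpha=\theta r/N-1>0$. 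Since one also shows $F(\bu)^{-1}\le C t^{-N/(\theta r)}$, the gap satisfies $\bu-S(t)\phi\ge C_1F(\bu)^{\alpha}\ge c\,t^{1-N/(\theta r)}$, which is exactly the scale of the Duhamel term. Both sides balance, and the constants can be matched via~$\sigma$.

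Your additive perturbation $W=g(t)+w$ with $w=S(t)[F(\phi)^{-r}]$ does not produce this balance. You correctly bound the Duhamel term in $L^\infty$ by $C\,t^{1-N/(\theta r)}$, but you never bound $\bu(t)-S(t)\phi$ from below. Where $w$ is largest (of order $t^{-N/\theta}$), the additive $g$ is negligible and the gap $F^{-1}((g+w)^{-1/r})-F^{-1}(w^{-1/r})$ is controlled by $g\,f(F^{-1}(w^{-1/r}))\,w^{-1/r-1}$. For general $f$ with $q=1$ one only has $f(u)F(u)\le C$ (upper bound) and $f(u)\ge cF(u)^{-(1-\delta)}$ (lower bound for any $\delta>0$), so the gap at those points is at most $O(g\,w^{-1})=O(g\,t^{N/\theta})$ and at least $O(g\,w^{-1-\delta/r})$; neither matches $t^{1-N/(\theta r)}$ for constant $g$, and picking $g(t)=Mt^{-\gamma}$ launches you into a nontrivial exponent-matching problem you have not carried out. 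The multiplicative/exponential shift in the paper's $\bu$ is what makes the two scales coincide without further tuning; this is the missing idea.

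For (ii), your approach is genuinely different from the paper's and more laborious. The paper does \emph{not} iterate Duhamel. It first proves (using convexity of $f$ and Jensen once) the clean necessary condition $\|S(t)\phi\|_\infty\le F^{-1}(t)$ for any nonnegative solution (Proposition~\ref{S4P1}). It then takes your same $\phi=F^{-1}(|x|^{\alpha})$, $\theta<\alpha<N/r$, and by a direct lower bound on $S(t)\phi(0)$ obtains $F^{-1}(t)\ge F^{-1}(t^{\beta})(1-Ct^{\varepsilon\theta})$ with $\beta>1$. The contradiction then comes from a purely ODE-type lemma (their Lemma~\ref{S4L3+}: $F(s)^{\beta}\le F(s+C_1F(s)^{\gamma})$ for $q=1$) together with the elementary growth fact $sF(s)^{\varepsilon\theta/2}\to 0$. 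Your bootstrap, by contrast, requires you to track how the spatial singularity of successive lower bounds improves step by step and to justify that finitely many iterations blow up the integral; this is plausible but you have not supplied the ``fixed positive amount per step'' quantification, and for super-exponential $f$ (e.g.\ $e^{u^2}$) the bookkeeping is not automatic. The paper's route avoids all of this.
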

\begin{remark}
(i) The nonlinear terms $\exp(u^p)$, $p\ge 1$, and $\exp(\cdots\exp(u)\cdots)$ satisfy both (\ref{F1}) and (\ref{F2}).
The two functions are convex.
Therefore, we can apply Theorem~\ref{B}~(i) and (ii).
See Example~\ref{EX1}.\\
(ii) The critical case is not included in Theorem~\ref{B}.
However, in Theorem~\ref{S5T2} we construct a local-in-time solution for $f(u)=e^u$ in the critical case, as mentioned in Remark~\ref{R1}~(iii).
When $f(u)=e^u$ and $\phi\ge 0$, Theorem~\ref{S5T2} gives a complete classification in $L^r_{\rm{ul},\rho}(\RN)$.\\
(iii) In Theorem~\ref{B}~(ii) we take the same $\phi$ as in Theorem~\ref{A}~(ii). We can choose $\phi$ such that $\phi\in L^1_{\rm{ul},\rho}(\RN)$.
\end{remark}

Let us explain technical details.
The fixed point argument does not manage to construct a solution in $L^r(\RN)$ or $L^r_{\rm{ul},\rho}(\RN)$ when the nonlinear term grows exponentially.
Then we use a monotonicity method (Lemma~\ref{S2L-1}).
See \cite{RS13} for details of the method.
The initial data has to be nonnegative, while we can deal with rapidly growing nonlinearities.
In this method the existence of a supersolution is crucial.
A feature of the present paper is supersolutions (\ref{SS}) and (\ref{SS3}).
They are inspired by \cite{FI18,RS13}.
We do not use changes of variables, and hence those supersolutions enable us to construct a solution of equations with nonlocal operators.
Those supersolutions look natural in view of \cite[Eq (13)]{RS13} and \cite[Eqs (1.18), (1.19)]{FI18}.
In particular, when $f(u)=u^p$ or $e^u$, the supersolutions (\ref{S5T1E0}), (\ref{S5T1E2+-}) and (\ref{S5T2E0}) are surprisingly simple.
However, various estimates in Section~3 are nontrivial, and all estimates in the critical case, which are discussed in Section~5, are optimal in the sense where the exponent of the time variable $t$ becomes $0$.
When $\theta=2$, the way of the construction of the supersolutions can be explained as follows:
Assume that $\theta=2$ and that $f$ satisfies (\ref{F1}) with $q\ge 1$.
Let $u$ be a solution of (\ref{S1E1}) with initial data $\phi$, and let $\tu:=(F_q^{-1}\circ F)(u)$.
Then, $\tu$ satisfies
\begin{equation}\label{FI}
\partial_t\tu-\Delta\tu=f_q(\tu)+\frac{q-f'(u)F(u)}{f_q(\tu)F_q(\tu)}|\nabla\tu|^2.
\end{equation}
The solvability depends on the behavior of $u$ where $u$ is close to $\infty$.
When $u$ is large, by (\ref{F1}) we see that $f'(u)F(u)\approx q$, and hence $\partial_t\tu-\Delta\tu\approx f_q(\tu)$ if $|\nabla\tu|^2$ is not large.
In other words, the general equation (\ref{S1E1}) can be transformed into one of the canonical two equations (\ref{fq}) if $u$ is large.
On the other hand, the solution of the canonical two equations is approximated by a solution of the {\it linear} heat equation in a short time.
Therefore, $\tu(t)\approx (S(t)\circ F_q^{-1}\circ F)(\phi)$ for small $t>0$.
Then, the solution $u$ of the original equation can be approximated by the pullback of the approximate solution of $\partial_t\tu-\Delta\tu\approx f_q(\tu)$, i.e., $u(t)\approx (F^{-1}\circ F_q\circ S(t)\circ F_q^{-1}\circ F)(\phi)$.
See Figure~\ref{fig1}.
Modifying this approximate solution, we obtain the supersolutions (\ref{SS}) and (\ref{SS3}).
These supersolutions work well even in the case $0<\theta<2$.
\begin{figure}\label{fig1}
\begin{tabular}{ccc}
$\partial_t u-\Delta u=f(u)$ & 
$\xLongleftrightarrow[{\rm{Almost~equivalent~if~{\it u}~is~large.}}]{{F(u)=F_q(\tu)}}$ &
$\partial_t\tu-\Delta\tu\approx f_q(\tu)$
\\
\\
$\phi$ & $\longmapsto$ & $(F_q^{-1}\circ F)(\phi)$ \\
 & &$\qquad\qquad\qquad$ \rotatebox[origin=c]{270}{$\longmapsto$} $t>0$ is small. \\
$(F^{-1}\circ F_q\circ S(t)\circ F_q^{-1}\circ F)(\phi)$ & \rotatebox[origin=c]{180}{$\longmapsto$} & $(S(t)\circ F_q^{-1}\circ F)(\phi)$
\end{tabular}\caption{The approximation of the solution via the canonical two equations.}
\end{figure}
The proof of the nonexistence is rather standard.
Specifically, the decay estimate (\ref{S4PBE0+}) of the solution with the initial data (\ref{S4PBE0}) contradicts the necessary condition for the local-in-time existence given in Proposition~\ref{S4P1}.
However, the exponential decay of the heat kernel is not used in the proof.
Sharp estimates, which are newly obtained in this paper, are required. See Lemmas~\ref{S4L3} and \ref{S4L3+}.

Let us mention $L^p_{\rm{ul},\rho}(\RN)$ spaces.
The $L^p_{\rm{ul},\rho}$-$L^q_{\rm{ul},\rho}$ estimate for convolution type operators was obtained by Maekawa-Terasawa~\cite[Theorem~3.1]{MT06}.
The proof works for the case $0<\theta<2$ with modifications.

This paper consists of six sections.
In Section~2 we give examples of Theorems~\ref{A} and \ref{B}.
We recall properties of the fundamental solution $G$.
We prove basic results which will be used in the proof of main theorems.
In Section~3 we prove existence theorems, i.e., Theorems~\ref{A} (i-1) (i-2) and \ref{B} (i).
In Section~4 we prove nonexistence theorems, i.e., Theorems~\ref{A} (ii) and \ref{B} (ii).
In Section~5 we construct a local-in-time solution for $f(u)=u^p$ or $e^u$ in the critical case.
Section~6 is a summary and conjectures.

\section{Examples and preliminaries}
\subsection{Example}
The following Lemma~\ref{Lq} is a fundamental property about the exponent $q$.
\begin{lemma}\label{Lq}
Let $f$ be a function such that (\ref{F1}) with $q$ holds.
Then $q\ge 1$.
\end{lemma}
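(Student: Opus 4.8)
The plan is to argue by contradiction using the auxiliary function $G(u):=f(u)F(u)$. By (\ref{F1}), there is a half-line $[u_1,\infty)$ on which $F$ is finite, positive, and $C^1$ with $F'(u)=-1/f(u)$ (and on which $f\in C^1$), so on this half-line
\[
G'(u)=f'(u)F(u)+f(u)F'(u)=f'(u)F(u)-1 .
\]
By the definition of $q$ in (\ref{F1}) this gives $G'(u)\to q-1$ as $u\to\infty$.

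Suppose, for contradiction, that $q<1$, and set $\delta:=(1-q)/2>0$. Then there is $u_0\ge u_1$ such that $f'(u)F(u)\le q+\delta=1-\delta$ for all $u\ge u_0$, hence $G'(u)\le-\delta$ for $u\ge u_0$. Integrating from $u_0$ to $u$ yields $G(u)\le G(u_0)-\delta(u-u_0)$, so $G(u)\to-\infty$ as $u\to\infty$.

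On the other hand, for every $u$ with $F(u)$ finite we have $f(u)>0$ and $F(u)=\int_u^\infty dt/f(t)>0$, so $G(u)=f(u)F(u)>0$ on $[u_1,\infty)$. This contradicts $G(u)\to-\infty$, and therefore $q\ge1$.

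I do not expect a genuine obstacle here; the only point requiring a little care is to carry out all the differentiation and integration on the half-line where (\ref{F1}) guarantees $F(u)<\infty$ and $f\in C^1$, so that $G$ really is $C^1$ there with the stated derivative, and to note that positivity of $G$ holds on that same half-line.
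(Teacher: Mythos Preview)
Your proof is correct. Both your argument and the paper's proceed by contradiction from the inequality $f'(u)F(u)\le q_0<1$ for large $u$, but the executions differ. The paper first integrates $f'/f\le q_0/(fF)$ to bound $f(u)\le f(u_0)F(u_0)^{q_0}F(u)^{-q_0}$, then rewrites this as a differential inequality for $F$ and integrates again to force $F(u)^{1-q_0}$ negative. You instead observe directly that $G=fF$ satisfies $G'=f'F-1\le -\delta$, so a single integration gives $G(u)\to-\infty$, contradicting $G>0$. Your route is shorter (one integration rather than two) and avoids the intermediate pointwise bound on $f$; the paper's version, on the other hand, produces that bound $f(u)\le CF(u)^{-q_0}$ as a byproduct, which is exactly the content of Lemma~\ref{S2L2} used later in the existence proofs.
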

\begin{proof}
This is proved by \cite[Remark 1.1]{FI18} and \cite[Lemma~2.1]{M18}.
However, we show the proof.
Suppose the contrary, i.e., there is $q_0\in(0,1)$ such that $f'(u)F(u)\le q_0$ for $u\ge u_0$.
Integrating $f'(u)/f(u)\le q_0/(f(u)F(u))$ over $[u_0,u]$ twice, we have
\[
\frac{u-u_0}{f(u_0)F(u_0)^{q_0}}\le\frac{1}{1-q_0}\left(F(u_0)^{1-q_0}-F(u)^{1-q_0}\right)
\ \ \textrm{for}\ \ u>u_0.
\] 
Then,
\[
0\le F(u)^{1-q_0}\le F(u_0)^{1-q_0}-\frac{(1-q_0)(u-u_0)}{f(u_0)F(u_0)^{q_0}}
\to -\infty\ \ \textrm{as}\ \ u\to \infty.
\]
We obtain a contradiction, and hence $q\ge 1$.
\end{proof}
\begin{example}\label{EX1}
(i) Let $f(u):=(u+1)^p\log (u+1)$, $p>1$.
We have
\[
\lim_{u\to\infty}f'(u)F(u)=\lim_{u\to\infty}\frac{f'(u)^2}{f(u)f''(u)}=\frac{p}{p-1}.
\]
Moreover, by direct calculation we see that $f'(u)^2/(f(u)f''(u))\le p/(p-1)$.
Integrating $1/f(u)\le pf''(u)/((p-1)f'(u)^2)$ over $[u,\infty)$, we see that $f'(u)F(u)\le p/(p-1)$ for large $u>0$.
Hence, $f$ satisfies (\ref{F1}) with $q=p/(p-1)$ and $f'(u)F(u)\le q$ for large $u>0$.
Theorem~\ref{A} is applicable.
The leading term of $f$ is not necessarily $u^p$.\\
(ii) Let $f(u)=\exp(u^p)$, $p\ge 1$.
By a similar argument as in (i) we can show that $\lim_{u\to\infty}f'(u)F(u)=1$.
We can easily see that (\ref{F2}) holds.
Therefore, Theorem~\ref{B} is applicable.\\
(iii) Let $f(u)=\exp(\cdots\exp(u)\cdots)$ be the $n$-th iterated exponential function.
Then, $f$ satisfies (\ref{F1}) with $q=1$ and (\ref{F2}).
Theorem~\ref{B} is applicable. See \cite[Section 2.3]{M18}.\\
(iv) Let $f(u):=\exp(g(u))$.
If $g\in C^2(0,\infty)$, $g'(u)\ge 0$ for $u\ge 0$, $g''(u)\ge 0$ for $u\ge 0$ and $\lim_{u\to\infty}g''(u)/(g'(u)^2)=0$, then $f$ satisfies (\ref{F1}) with $q=1$ and (\ref{F2}).
Theorem~\ref{B} is applicable.
\end{example}

\subsection{Fundamental solution}
Let $N\ge 1$ and $0<\theta\le 2$.
The fractional heat equation (\ref{S1E4}) admits a fundamental solution $G$.
We recall various properties of $G$.
The fundamental solution $G(x,t)$ is expressed by
\[
G(x,t)=
\begin{cases}
(4\pi t)^{-N/2}\exp\left(-\frac{|x|^2}{4t}\right) & \textrm{if}\ \theta=2,\\
\int_0^tg_{t,\frac{\theta}{2}}(s)(4\pi s)^{-\frac{N}{2}}\exp\left(-\frac{|x|^2}{4s}\right)ds & \textrm{if}\ 0<\theta<2,
\end{cases}
\]
where $g_{t,\frac{\theta}{2}}$ is a nonnegative function on $[0,\infty)$ defined by
\[
g_{t,\frac{\theta}{2}}(s):=\frac{1}{2\pi i}\int_{\sigma-i\infty}^{\sigma+i\infty}
\exp\left(zs-tz^{\frac{\theta}{2}}\right)dz,\ \sigma>0,\ t>0.
\]
The fundamental solution $G$ is a positive smooth function on $\RN\times (0,\infty)$.
Moreover, the following hold:
\begin{align}
&G(x,t)=t^{-\frac{N}{\theta}}G(t^{-\frac{1}{\theta}}x,1),\nonumber\\
&C^{-1}(1+|x|)^{-N-\theta}\le G(x,1)\le C(1+|x|)^{-N-\theta}\qquad\ \ \textrm{if}\ 0<\theta<2,\label{Gdecay}\\
& G(x,1)=(4\pi)^{-N/2}\exp(-|x|^2/4)\le C(1+|x|)^{-N-\theta}\ \ \textrm{if}\ \ \theta=2,\nonumber\\
&G(\,\cdot\,,1)\ \textrm{is radially symmetric and $G(x,1)\le G(y,1)$ if $|x|\ge|y|$,}\nonumber\\
&G(x,t)=\int_{\RN}G(x-y,t-s)G(y,s)dy,\nonumber\\
&\int_{\RN}G(x,t)dx=1,\nonumber
\end{align}
for $x,y\in\RN$ and $0<s<t$.
See e.g., \cite[Section~2]{S75} for the representation of $G$ and above properties.
These properties are summarized as follows:
\begin{proposition}\label{S2S2P1}
Let $N\ge 1$ and $0<\theta\le 2$.
Then there is a function of one variable $K(\,\cdot\,)$ such that the following hold:
$K(\,\cdot\,)$ is positive nonincreasing, $G(x,t)=t^{-N/\theta}K(t^{-1/\theta}|x|)$ and
there is $C>0$ such that $0\le |x|^{N+\theta}K(|x|)\le C$ for $x\in\RN$.
In particular, $K(|x|)=G(x,1)$.
\end{proposition}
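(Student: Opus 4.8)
The plan is simply to repackage the list of properties of $G$ recorded just above into the asserted form, so the ``proof'' is a matter of bookkeeping rather than of any new argument. Since $G(\,\cdot\,,1)$ is radially symmetric, the value $G(x,1)$ depends only on $|x|$; I would therefore \emph{define} a function of one variable $K\colon[0,\infty)\to(0,\infty)$ by $K(r):=G(x,1)$ for any $x\in\RN$ with $|x|=r$. (Concretely, $K(r)=(4\pi)^{-N/2}e^{-r^2/4}$ when $\theta=2$, and $K(r)=\int_0^1 g_{1,\theta/2}(s)(4\pi s)^{-N/2}e^{-r^2/(4s)}\,ds$ when $0<\theta<2$.) This definition immediately yields the last assertion $K(|x|)=G(x,1)$.

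Next I would verify the three listed properties one at a time. Positivity of $K$ is exactly the positivity of $G$ on $\RN\times(0,\infty)$. For monotonicity, the recalled property ``$G(x,1)\le G(y,1)$ if $|x|\ge|y|$'' says precisely that $K(r_1)\le K(r_2)$ whenever $r_1\ge r_2$, i.e.\ $K$ is nonincreasing. The scaling identity $G(x,t)=t^{-N/\theta}G(t^{-1/\theta}x,1)$ rewrites, using the definition of $K$ together with $|t^{-1/\theta}x|=t^{-1/\theta}|x|$, as $G(x,t)=t^{-N/\theta}K(t^{-1/\theta}|x|)$.

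Finally, for the two-sided bound on $|x|^{N+\theta}K(|x|)$, I would invoke the decay estimate $G(x,1)\le C(1+|x|)^{-N-\theta}$ — which holds for every $0<\theta\le 2$ by (\ref{Gdecay}) for $0<\theta<2$ and by the displayed Gaussian bound for $\theta=2$ — to get
\[
0\le |x|^{N+\theta}K(|x|)=|x|^{N+\theta}G(x,1)\le C\,|x|^{N+\theta}(1+|x|)^{-N-\theta}\le C
\]
for all $x\in\RN$, the lower bound being trivial from $K>0$. This completes the verification. There is no genuine obstacle: the entire content of the proposition is contained in the properties of $G$ recalled in Section~2, which are themselves classical (see \cite{S75}); the statement is isolated here only to provide a clean reference for the estimates used in later sections.
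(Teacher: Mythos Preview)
Your proposal is correct and matches the paper's approach: the paper presents this proposition explicitly as a summary of the previously listed properties of $G$ (``These properties are summarized as follows''), so no separate proof is given there. Your verification simply spells out the repackaging in detail, which is exactly what is intended.
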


\begin{proposition}\label{S2L3}
Let $N\ge 1$ and $0<\theta\le 2$, and let $1\le\alpha\le\beta\le\infty$.
Then there is $C>0$ such that, for $\phi\in L^{\alpha}_{\rm{ul},\rho}(\RN)$,
\begin{equation}\label{S2L3E0}
\left\|S(t)\phi\right\|_{L^{\beta}_{\rm{ul},\rho}(\RN)}
\le \left({C}{\rho^{-N\left(\frac{1}{\alpha}-\frac{1}{\beta}\right)}}
+{C}{t^{-\frac{N}{\theta}\left(\frac{1}{\alpha}-\frac{1}{\beta}\right)}}\right)
\left\|\phi\right\|_{L^{\alpha}_{\rm{ul},\rho}(\RN)}
\ \ \textrm{for}\ \ t>0.
\end{equation}
Hence, there are $C_0>0$ and $t_0>0$ such that
\begin{equation}\label{S2L3E1}
\left\|S(t)\phi\right\|_{L^{\beta}_{\rm{ul},\rho}(\RN)}
\le C_0t^{-\frac{N}{\theta}\left(\frac{1}{\alpha}-\frac{1}{\beta}\right)}
\left\|\phi\right\|_{L^{\alpha}_{\rm{ul},\rho}(\RN)}
\ \ \textrm{for}\ \ 0<t<t_0.
\end{equation}
\end{proposition}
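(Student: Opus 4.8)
The plan is to control $\|S(t)\phi\|_{L^\beta(B_y(\rho))}$ uniformly in $y\in\RN$ by splitting the convolution according to the distance to the centre of the ball on which the norm is measured. Fix $y\in\RN$ and decompose $S(t)\phi=S(t)[\phi\,\mathbf 1_{B_y(2\rho)}]+S(t)[\phi\,\mathbf 1_{\RN\setminus B_y(2\rho)}]=:u_{\mathrm{near}}+u_{\mathrm{far}}$, where $\mathbf 1_E$ denotes the indicator of $E$. Since $\phi\,\mathbf 1_{B_y(2\rho)}$ is supported in a single ball, which is covered by a number $M=M(N)$ of balls of radius $\rho$, we have $\|\phi\,\mathbf 1_{B_y(2\rho)}\|_{L^\alpha(\RN)}\le M\|\phi\|_{L^\alpha_{\mathrm{ul},\rho}(\RN)}$. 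For $u_{\mathrm{near}}$ apply Young's inequality on $\RN$ with $1+\tfrac1\beta=\tfrac1s+\tfrac1\alpha$ (here $s\in[1,\infty]$ because $\tfrac1\alpha-\tfrac1\beta\in[0,1]$):
\[
\|u_{\mathrm{near}}\|_{L^\beta(B_y(\rho))}\le\|u_{\mathrm{near}}\|_{L^\beta(\RN)}\le\|G(\cdot,t)\|_{L^s(\RN)}\,\|\phi\,\mathbf 1_{B_y(2\rho)}\|_{L^\alpha(\RN)}.
\]
By Proposition~\ref{S2S2P1}, $G(x,t)=t^{-N/\theta}K(t^{-1/\theta}|x|)$ with $K$ positive nonincreasing, $K(0)=G(0,1)<\infty$ and $|x|^{N+\theta}K(|x|)\le C$; hence $\|G(\cdot,1)\|_{L^s(\RN)}<\infty$ for every $s\in[1,\infty]$, and the substitution $w=t^{-1/\theta}x$ gives $\|G(\cdot,t)\|_{L^s(\RN)}=t^{-\frac N\theta(1-\frac1s)}\|G(\cdot,1)\|_{L^s(\RN)}=t^{-\frac N\theta(\frac1\alpha-\frac1\beta)}\|G(\cdot,1)\|_{L^s(\RN)}$. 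This produces the term $C\,t^{-\frac N\theta(\frac1\alpha-\frac1\beta)}\|\phi\|_{L^\alpha_{\mathrm{ul},\rho}(\RN)}$ of (\ref{S2L3E0}).

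For $u_{\mathrm{far}}$, decompose $\RN\setminus B_y(2\rho)=\bigcup_{k\ge1}A_k$ with $A_k:=\{z:2^k\rho\le|z-y|<2^{k+1}\rho\}$. Each $A_k$ is covered by at most $C2^{kN}$ balls of radius $\rho$, so H\"older's inequality gives $\int_{A_k}|\phi|\le C2^{kN}\rho^{N/\alpha'}\|\phi\|_{L^\alpha_{\mathrm{ul},\rho}(\RN)}$ with $\tfrac1\alpha+\tfrac1{\alpha'}=1$; moreover $|x-z|\ge 2^{k-1}\rho$ whenever $x\in B_y(\rho)$ and $z\in A_k$, so the monotonicity of $K$ together with $K(\lambda)\le K(0)$ and $\lambda^{N+\theta}K(\lambda)\le C$ yields $\sup_{z\in A_k}G(x-z,t)\le C\min(t^{-N/\theta},\,t\,(2^{k-1}\rho)^{-(N+\theta)})$. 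Summing over $k$,
\[
|u_{\mathrm{far}}(x)|\le C\rho^{N/\alpha'}\|\phi\|_{L^\alpha_{\mathrm{ul},\rho}(\RN)}\sum_{k\ge1}2^{kN}\min\big(t^{-N/\theta},\,t\,(2^{k-1}\rho)^{-(N+\theta)}\big).
\]
The crucial point, and what I expect to be the main obstacle, is that this sum is $\le C\rho^{-N}$ \emph{uniformly in $t>0$}: setting $\mu:=t^{1/\theta}/\rho$, the $k$-th summand equals $\rho^{-N}2^{kN}\min(\mu^{-N},\,\mu^\theta 2^{-(k-1)(N+\theta)})$, the first entry of the minimum dominates exactly for $k\le\log_2\mu+1$, and each of the two resulting partial sums is geometric and dominated by its largest term, which is $O(1)$. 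Hence $|u_{\mathrm{far}}(x)|\le C\rho^{N/\alpha'-N}\|\phi\|_{L^\alpha_{\mathrm{ul},\rho}(\RN)}=C\rho^{-N/\alpha}\|\phi\|_{L^\alpha_{\mathrm{ul},\rho}(\RN)}$ for $x\in B_y(\rho)$, so $\|u_{\mathrm{far}}\|_{L^\beta(B_y(\rho))}\le C\rho^{N/\beta-N/\alpha}\|\phi\|_{L^\alpha_{\mathrm{ul},\rho}(\RN)}$, which is the first term of (\ref{S2L3E0}).

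Adding the two estimates and taking the supremum over $y\in\RN$ proves (\ref{S2L3E0}). For (\ref{S2L3E1}) choose $t_0:=\rho^\theta$: when $0<t<t_0$ the nonpositive exponent $-\tfrac N\theta(\tfrac1\alpha-\tfrac1\beta)$ gives $\rho^{-N(\frac1\alpha-\frac1\beta)}=t_0^{-\frac N\theta(\frac1\alpha-\frac1\beta)}\le t^{-\frac N\theta(\frac1\alpha-\frac1\beta)}$, so both terms of (\ref{S2L3E0}) are absorbed into $C_0\,t^{-\frac N\theta(\frac1\alpha-\frac1\beta)}$ with $C_0:=2C$. Apart from the uniform-in-$t$ estimate of the dyadic far-field sum and the bookkeeping of the powers of $\rho$, the argument uses only Young's and H\"older's inequalities and the self-similar form of $G$ from Proposition~\ref{S2S2P1}; alternatively, (\ref{S2L3E0}) can be deduced from Maekawa--Terasawa~\cite[Theorem~3.1]{MT06} once one observes that the only property of the heat kernel invoked there is the bound $|x|^{N+\theta}G(x,1)\le C$, which by Proposition~\ref{S2S2P1} remains valid for $0<\theta<2$.
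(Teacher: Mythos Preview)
Your argument is correct. The near/far splitting with Young's inequality on the near part and a dyadic annular decomposition on the far part is precisely the mechanism behind the Maekawa--Terasawa result \cite[Theorem~3.1, Corollary~3.1]{MT06} that the paper invokes; you have simply written out the details rather than citing them. Your closing remark that the only kernel property needed is the pointwise bound $|x|^{N+\theta}G(x,1)\le C$ from Proposition~\ref{S2S2P1} is exactly what the paper means by ``minor modifications'', so your proof and the paper's coincide at the level of strategy, with yours being self-contained. The choice $t_0=\rho^\theta$ for (\ref{S2L3E1}) is the natural one and matches the paper's (unstated) reasoning.
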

\begin{proof}
The inequality (\ref{S2L3E0}) follows from the $L^{\alpha}$-$L^{\beta}$ inequality
\begin{equation}\label{S2L3E2}
\left\|S(t)\phi\right\|_{\beta}\le Ct^{-\frac{N}{\theta}\left(\frac{1}{\alpha}-\frac{1}{\beta}\right)}\left\|\phi\right\|_{\alpha}
\end{equation}
and \cite[Corollary~3.1]{MT06} with minor modifications.
See \cite{FKRT17,HO10} for (\ref{S2L3E2}).
The inequality (\ref{S2L3E1}) immediately follows from (\ref{S2L3E0}).
\end{proof}
We show that the constant $C_0>0$ in (\ref{S2L3E1}) can be chosen arbitrarily small if $\phi\in\calL^{\alpha}_{\rm{ul},\rho}(\RN)$.
This property is used in the critical case.
\begin{proposition}\label{S2S2P2}
Let $N\ge 1$ and $0<\theta\le 2$, and let $1\le\alpha<\beta\le\infty$.
For each $\phi\in\calL^{\alpha}_{\rm{ul},\rho}(\RN)$ and each $C_0>0$, there is $t_0=t_0(\phi,C_0)>0$ such that
\begin{equation}\label{S2S2P2E0}
\left\|S(t)\phi\right\|_{L^{\beta}_{\rm{ul},\rho}(\RN)}\le C_0t^{-\frac{N}{\theta}\left(\frac{1}{\alpha}-\frac{1}{\beta}\right)}\ \ \textrm{for}\ \ 0<t<t_0.
\end{equation}
\end{proposition}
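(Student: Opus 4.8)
The plan is to combine the definition of $\calL^{\alpha}_{\rm{ul},\rho}(\RN)$ as the $\|\cdot\|_{L^{\alpha}_{\rm{ul},\rho}(\RN)}$-closure of $BUC(\RN)$ with two ingredients already available: the fixed-constant smoothing estimate (\ref{S2L3E1}), and the elementary fact that $S(t)$ is a contraction on $L^{\infty}(\RN)$ (since $G\ge 0$ and $\int_{\RN}G(x,t)\,dx=1$). Write $\mu:=\frac{N}{\theta}\left(\frac{1}{\alpha}-\frac{1}{\beta}\right)$, which is strictly positive because $\alpha<\beta$, and let $C_1>0$, $t_1>0$ be the constants given by Proposition~\ref{S2L3}, so that $\|S(t)\psi\|_{L^{\beta}_{\rm{ul},\rho}(\RN)}\le C_1 t^{-\mu}\|\psi\|_{L^{\alpha}_{\rm{ul},\rho}(\RN)}$ for all $\psi\in L^{\alpha}_{\rm{ul},\rho}(\RN)$ and $0<t<t_1$; crucially $C_1$ and $t_1$ depend only on $N,\theta,\alpha,\beta,\rho$, not on $\psi$.

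First I would fix $\phi\in\calL^{\alpha}_{\rm{ul},\rho}(\RN)$ and $C_0>0$ and pick $\psi\in BUC(\RN)$ with $\|\phi-\psi\|_{L^{\alpha}_{\rm{ul},\rho}(\RN)}<C_0/(2C_1)$. Decomposing $S(t)\phi=S(t)(\phi-\psi)+S(t)\psi$ and applying (\ref{S2L3E1}) to the first term gives
\[
\|S(t)(\phi-\psi)\|_{L^{\beta}_{\rm{ul},\rho}(\RN)}\le C_1 t^{-\mu}\|\phi-\psi\|_{L^{\alpha}_{\rm{ul},\rho}(\RN)}<\tfrac{C_0}{2}\,t^{-\mu}\qquad(0<t<t_1).
\]
For the smooth term, contractivity on $L^{\infty}(\RN)$ yields $\|S(t)\psi\|_{\infty}\le\|\psi\|_{\infty}$, and since $L^{\infty}_{\rm{ul},\rho}(\RN)=L^{\infty}(\RN)$ embeds continuously into $L^{\beta}_{\rm{ul},\rho}(\RN)$ with an embedding constant $C_2=C_2(N,\beta,\rho)$ (a power of the volume of a $\rho$-ball, and $C_2=1$ when $\beta=\infty$), we obtain the $t$-independent bound $\|S(t)\psi\|_{L^{\beta}_{\rm{ul},\rho}(\RN)}\le C_2\|\psi\|_{\infty}$. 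Because $\mu>0$ forces $t^{-\mu}\to\infty$ as $t\downarrow0$, there is $t_2>0$ (for instance $t_2=\bigl(C_0/(2C_2(\|\psi\|_{\infty}+1))\bigr)^{1/\mu}$) such that $C_2\|\psi\|_{\infty}\le\tfrac{C_0}{2}t^{-\mu}$ for $0<t<t_2$. Setting $t_0:=\min\{t_1,t_2\}$ and summing the two bounds gives $\|S(t)\phi\|_{L^{\beta}_{\rm{ul},\rho}(\RN)}\le C_0 t^{-\mu}$ for $0<t<t_0$, which is (\ref{S2S2P2E0}).

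I do not expect a genuine obstacle: this is the standard "the improved behaviour holds on a dense subclass, and the remainder is absorbed by the uniform smoothing estimate" device, and $t_0$ naturally ends up depending on $\phi$ (through $\psi$) and $C_0$. The only points deserving a line of care are that the constant in (\ref{S2L3E1}) is truly independent of the function (so a single $\psi$ works), the continuity of $L^{\infty}(\RN)\hookrightarrow L^{\beta}_{\rm{ul},\rho}(\RN)$ — immediate from the definition of the uniformly local norm — and the degenerate case $\beta=\infty$, where the second estimate is merely the contractivity of $S(t)$ on $L^{\infty}(\RN)$.
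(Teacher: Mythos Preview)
Your proof is correct and follows essentially the same approach as the paper's: both split $\phi=(\phi-\psi)+\psi$ with $\psi\in BUC(\RN)$, apply the smoothing estimate (\ref{S2L3E1}) to $\phi-\psi$, and use boundedness of $S(t)\psi$ in $L^{\infty}$ to make the second term negligible against $t^{-\mu}$ as $t\downarrow 0$. The paper phrases the conclusion via a $\limsup$ argument (following \cite{BC96}) while you give an explicit choice of $t_0$, but this is only a cosmetic difference.
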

\begin{proof}
We follow the proof of \cite[Lemma~8]{BC96}.
Let $\gamma:=\frac{N}{\theta}\left(\frac{1}{\alpha}-\frac{1}{\beta}\right)$.
By (\ref{S2L3E1}) we see the following:
For any $\psi\in L^{\infty}$, there is $t_0>0$ such that
\begin{align*}
t^{\gamma}\left\|S(t)\phi\right\|_{L^{\beta}_{\rm{ul},\rho}(\RN)}
&\le t^{\gamma}\left\|S(t)(\phi-\psi)\right\|_{L^{\beta}_{\rm{ul},\rho}(\RN)}+t^{\gamma}\left\|S(t)\psi\right\|_{L^{\beta}_{\rm{ul},\rho}(\RN)}\\
&\le C\left\|\phi-\psi\right\|_{L^{\alpha}_{\rm{ul},\rho}(\RN)}+Ct^{\gamma}\left\|\psi\right\|_{\infty}
\end{align*}
for $0<t<t_0$.
Then,
\[
\limsup_{t\to 0}t^{\gamma}\left\|S(t)\phi\right\|_{L^{\beta}_{\rm{ul},\rho}(\RN)}\le C\left\|\phi-\psi\right\|_{L^{\alpha}_{\rm{ul},\rho}(\RN)}.
\]
Since $\phi\in\calL^{\alpha}_{\rm{ul},\rho}(\RN)$, it follows from the definition of $\calL^{\alpha}_{\rm{ul},\rho}(\RN)$ that we can choose $\psi\in {BUC}(\RN)(\subset L^{\infty}(\RN))$ such that $\left\|\phi-\psi\right\|_{L^{\alpha}_{\rm{ul},\rho}(\RN)}$ is arbitrarily small.
Thus, (\ref{S2S2P2E0}) holds.
\end{proof}

\subsection{Preliminaries}
First we recall the monotonicity method.
\begin{lemma}\label{S2L-1}
Let $0<T\le\infty$ and let $f$ be a continuous nondecreasing function such that $f(0)\ge 0$.
The problem (\ref{S1E1}) has a solution in the sense of Definition~\ref{S1D1} if and only if (\ref{S1E1}) has a supersolution.
\end{lemma}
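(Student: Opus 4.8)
The plan is to prove the two implications separately. The forward direction is trivial: if $u$ is a solution in the sense of Definition~\ref{S1D1}, then $\calF[u](t)=u(t)$, so in particular $\calF[u](t)\le u(t)$, i.e.\ $u$ is itself a supersolution. So the whole content is the converse: given a supersolution $\bu$, construct an actual solution. For this I would use the standard monotone iteration scheme. Define $u_0:=0$ and, inductively, $u_{n+1}:=\calF[u_n]$, that is,
\[
u_{n+1}(t)=S(t)\phi+\int_0^t S(t-s)f(u_n(s))\,ds.
\]
Since $f(0)\ge 0$ and $\phi\ge 0$ and the kernel $G$ is positive, $u_1=S(t)\phi+\int_0^t S(t-s)f(0)\,ds\ge 0=u_0$. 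Because $f$ is nondecreasing and $S(t-s)$ is order-preserving (positivity of $G$), the map $v\mapsto\calF[v]$ is monotone, so by induction $0=u_0\le u_1\le u_2\le\cdots$. Similarly, comparing $u_0=0\le\bu$ and applying $\calF$ repeatedly together with the supersolution inequality $\calF[\bu]\le\bu$, one gets by induction $u_n\le\bu$ for all $n$. Hence $\{u_n(x,t)\}$ is nondecreasing in $n$ and bounded above by $\bu(x,t)<\infty$ a.e., so it converges pointwise a.e.\ to some measurable function $u$ with $0\le u\le\bu$.

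Next I would pass to the limit in the integral identity $u_{n+1}=\calF[u_n]$. Since $0\le u_n\uparrow u$ and $f$ is continuous and nondecreasing, $f(u_n)\uparrow f(u)$ pointwise a.e.\ (with the convention that both sides may be understood as limits of the monotone sequences), and $0\le f(u_n(s))\le f(\bu(s))$; the monotone convergence theorem applied to the nonnegative kernel $G(x-y,t-s)$ gives
\[
\int_0^t S(t-s)f(u_n(s))\,ds\ \longrightarrow\ \int_0^t S(t-s)f(u(s))\,ds
\]
for a.e.\ $x$ and each $t\in(0,T)$. Therefore $u(t)=\calF[u](t)$ a.e. Finally, to verify that $u$ is a genuine solution in the sense of Definition~\ref{S1D1} one must check $u(t)<\infty$ a.e.; this is immediate from $u\le\bu$ and the standing assumption that $\bu$ is a (finite) supersolution, and it also guarantees $\calF[u](t)=u(t)<\infty$, so the defining integral equation makes sense.

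The main technical point to be careful about — and the step I expect to require the most attention — is the justification of the limit $\int_0^t S(t-s)f(u_n(s))\,ds\to\int_0^t S(t-s)f(u(s))\,ds$, i.e.\ the double (space and time) application of monotone convergence: one should either invoke Tonelli's theorem to treat $\int_0^t\!\int_{\RN}G(x-y,t-s)f(u_n(y,s))\,dy\,ds$ as a single integral over $\RN\times(0,t)$ with a nonnegative monotone integrand, or note measurability of $(y,s)\mapsto f(u_n(y,s))$ and the a.e.\ finiteness of the limiting integral (which is $\le\bu(x,t)-S(t)\phi(x)<\infty$). A secondary subtlety is measurability of the limit $u$ and of $f(u)$ on $\RN\times(0,T)$, which follows since each $u_n$ is measurable (built by convolution integrals) and a pointwise monotone limit of measurable functions is measurable, and $f$ is continuous. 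One should also remark that the construction yields the \emph{minimal} solution, namely $u\le w$ for every solution $w$, since any solution is a supersolution and the iteration started from $0$ lies below it at every step; this minimality is not needed for the statement but is worth recording.
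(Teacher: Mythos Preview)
Your argument is correct and is essentially the same monotone iteration scheme the paper uses: the paper starts the iteration at $u_1=S(t)\phi$ rather than $u_0=0$, but the induction, the monotonicity of $\calF$, and the appeal to the monotone convergence theorem are identical. Your write-up is in fact more careful on the measurability and Tonelli points than the paper's brief sketch.
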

\begin{proof}
This lemma is well known.
See \cite[Theorem~2.1]{RS13} for details.
However, we briefly show the proof for readers' convenience.

If (\ref{S1E1}) has a solution, then the solution is also a supersolution.
Thus, it is enough to show that (\ref{S1E1}) has a solution if it has a supersolution.
Let $\bu$ be a supersolution in $\RN\times(0,T)$.
Let $u_1=S(t)\phi$.
We define $u_n$, $n=2,3,\ldots$, by
\[
u_n=\calF[u_{n-1}].
\]
Then we can show by induction that
\[
0\le u_1\le u_2\le\cdots\le u_n\le\cdots\le\bu <\infty\ \ \textrm{a.e.}\ x\in\RN,\ 0<t<T.
\]
This indicates that the limit $\lim_{n\to\infty}u_n(x,t)$ which is denoted by $u(x,t)$ exists for almost all $x\in\RN$ and $0<t<T$.
By the monotone convergence theorem we see that
\[
\lim_{n\to\infty}\calF[u_{n-1}]=\calF[u],
\]
and hence $u=\calF[u]$.
Then, $u$ is a solution of (\ref{S1E1}) in the sense of Definition~\ref{S1D1}.
\end{proof}

Hereafter in this section we prove several useful lemmas.
\begin{lemma}\label{S2L2}
Let $f$ be a function such that (F1) with $q\ge 1$ holds.
If there are $q_0\in[q,\infty)$ and $u_0>0$ such that $f'(u)F(u)\le q_0$ for $u\ge u_0$, then $f(u)\le f(u_0)F(u_0)^{q_0}/F(u)^{q_0}$ for $u\ge u_0$.
\end{lemma}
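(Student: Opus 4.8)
The plan is to reduce the claim to the monotonicity of a single auxiliary function. Set $g(u):=f(u)F(u)^{q_0}$ on $[u_0,\infty)$ (enlarging $u_0$ if necessary so that $F(u_0)<\infty$, which is possible by (F1)). By (F1) the function $f$ is continuous and positive on $(0,\infty)$, and since $F(u)=\int_u^\infty dt/f(t)$ it follows that $F$ is $C^1$ with $F'(u)=-1/f(u)$ and that $F(u)>0$ for every $u\ge u_0$, being the integral of a positive function over $(u,\infty)$. Hence $g\in C^1$ on $[u_0,\infty)$.

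The key computation is then
\[
g'(u)=f'(u)F(u)^{q_0}+q_0 f(u)F(u)^{q_0-1}F'(u)=F(u)^{q_0-1}\bigl(f'(u)F(u)-q_0\bigr),
\]
using $F'(u)=-1/f(u)$. By hypothesis $f'(u)F(u)\le q_0$ for $u\ge u_0$, and $F(u)^{q_0-1}>0$, so $g'(u)\le 0$ on $[u_0,\infty)$; that is, $g$ is nonincreasing there. Consequently $g(u)\le g(u_0)$, i.e. $f(u)F(u)^{q_0}\le f(u_0)F(u_0)^{q_0}$ for $u\ge u_0$, and dividing through by $F(u)^{q_0}>0$ gives the asserted bound $f(u)\le f(u_0)F(u_0)^{q_0}/F(u)^{q_0}$.

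There is no serious obstacle here: the argument is a one-line sign computation once $g$ is introduced, and the only care needed is in justifying that $F$ is differentiable with $F'=-1/f$ (immediate from (F1)) and in tracking the sign contributed by $F'<0$. Equivalently, one may phrase it additively: from $f'(u)/f(u)\le q_0/(f(u)F(u))=-q_0\,(\log F(u))'$ one obtains $\bigl(\log f(u)+q_0\log F(u)\bigr)'\le 0$, so $\log\bigl(f(u)F(u)^{q_0}\bigr)$ is nonincreasing on $[u_0,\infty)$, which yields the same conclusion.
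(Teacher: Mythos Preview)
Your proof is correct and is essentially the same as the paper's: the paper integrates $f'(u)/f(u)\le q_0/(f(u)F(u))$ over $[u_0,u]$ to obtain $\log(f(u)/f(u_0))\le q_0\log(F(u_0)/F(u))$, which is precisely your additive (logarithmic) reformulation, and your product form $g(u)=f(u)F(u)^{q_0}$ is just the exponentiated version of the same computation.
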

\begin{proof}
Integrating $f'(u)/f(u)\le q_0/(f(u)F(u))$ over $[u_0,u]$, we have that $\log \left(f(u)/f(u_0)\right)\le q_0\log\left(F(u_0)/F(u)\right)$, and hence we obtain the conclusion.
\end{proof}

Let $f_q(u)$ be defined by (\ref{fq}).
We define $F_q(u)$ by
\begin{equation}\label{Fqq}
F_q(u):=\int_u^{\infty}\frac{dt}{f_q(t)}=
\begin{cases}
\frac{1}{p-1}u^{-p+1},\ \frac{1}{p}+\frac{1}{q}=1, & \textrm{if}\ q>1,\\
e^{-u} & \textrm{if}\ q=1.
\end{cases}
\end{equation}
\begin{lemma}\label{S2L1}
Let $f$ be a function such that (F1) with $q\ge 1$ holds.
Assume that there are $\alpha\in[q,\infty)$ and $u_0>0$ such that $f'(u)F(u)\le\alpha$ for $u\ge u_0$.
Let $\Phi_{\alpha}(u)$ be defined by
\begin{equation}\label{S2L1E0}
\Phi_{\alpha}(u):=F^{-1}_{\alpha}\left(F(u)\right)=
\begin{cases}
(\alpha-1)^{\alpha-1}F(u)^{-(\alpha-1)} & \textrm{if}\ \alpha>1,\\
-\log F(u) & \textrm{if}\ \alpha=1.
\end{cases}
\end{equation}
The the following (i) (ii) and (iii) hold:\\
(i) $\Phi_{\alpha}\in C^2(0,\infty)$,\\
(ii) $\Phi_{\alpha}'(u)>0$ for $u>0$,\\
(iii) $\Phi_{\alpha}''(u)\ge 0$ for $u\ge u_0$, and hence $\Phi(u)$ is convex in $[u_0,\infty)$.
\end{lemma}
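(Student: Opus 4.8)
\emph{Plan.} The idea is to reduce all three statements to the elementary identity $F'(u)=-1/f(u)$ and its first consequence $F''(u)=f'(u)/f(u)^2$, treating the cases $\alpha>1$ and $\alpha=1$ in parallel. First I would record the basic facts about $F$ that make the closed forms in (\ref{S2L1E0}) legitimate on all of $(0,\infty)$: since (F1) forces $\int^{\infty}dt/f(t)$ to converge and since $f$ is positive and continuous on $(0,\infty)$, the integral $\int_u^{\infty}dt/f(t)$ is finite for every $u>0$, so $0<F(u)<\infty$ on $(0,\infty)$; and $F\in C^2(0,\infty)$ with $F'<0$ and $F''\ge 0$ because $f\in C^1(0,\infty)$. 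Since $\Phi_\alpha=F_\alpha^{-1}\circ F$ equals $(\alpha-1)^{\alpha-1}F(u)^{-(\alpha-1)}$ when $\alpha>1$ and $-\log F(u)$ when $\alpha=1$, in either case $\Phi_\alpha$ is a smooth function of the variable $F(u)\in(0,\infty)$ precomposed with the $C^2$ map $u\mapsto F(u)$, which gives (i). For (ii) I would simply differentiate these forms: for $\alpha>1$, $\Phi_\alpha'(u)=(\alpha-1)^{\alpha}F(u)^{-\alpha}/f(u)>0$, and for $\alpha=1$, $\Phi_\alpha'(u)=1/\big(f(u)F(u)\big)>0$ (equivalently, $F$ and $F_\alpha$ are strictly decreasing with nowhere-vanishing derivative, so $F_\alpha^{-1}\circ F$ is strictly increasing).

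The substance is in (iii). Differentiating once more and substituting $F'=-1/f$, $F''=f'/f^2$ yields, for $\alpha>1$,
\[
\Phi_\alpha''(u)=(\alpha-1)^{\alpha}\,F(u)^{-\alpha-1}f(u)^{-2}\big(\alpha-f'(u)F(u)\big),
\]
and for $\alpha=1$,
\[
\Phi_\alpha''(u)=F(u)^{-2}f(u)^{-2}\big(1-f'(u)F(u)\big).
\]
In both cases the prefactor is strictly positive on $(0,\infty)$ and the bracket is $\alpha-f'(u)F(u)$, which is $\ge 0$ for $u\ge u_0$ by hypothesis; hence $\Phi_\alpha''(u)\ge 0$ on $[u_0,\infty)$, i.e.\ $\Phi_\alpha$ is convex there.

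A coordinate-free route to the same inequality, which I find clarifies why the threshold $\alpha$ appears, is the following: writing $v=\Phi_\alpha(u)$, so that $F_\alpha(v)=F(u)$, implicit differentiation gives $v''\ge 0$ if and only if $F''(u)/F'(u)^2\le F_\alpha''(v)/F_\alpha'(v)^2$, that is, if and only if $f'(u)\le f_\alpha'\big(\Phi_\alpha(u)\big)$; and a one-line computation shows $f_\alpha'\big(\Phi_\alpha(u)\big)=\alpha/F(u)$ in both the power and the exponential case, so this is exactly the hypothesis $f'(u)F(u)\le\alpha$. I do not anticipate a genuine obstacle; the only points requiring care are the bookkeeping of exponents — using $(\alpha-1)(p-1)=1$ when $1/p+1/q=1$ with $\alpha$ in the role of $q$ — and confirming that $F$ remains finite and positive throughout $(0,\infty)$, so the formulas are valid for all $u>0$ and not merely near $u_0$.
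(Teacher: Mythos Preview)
Your proof is correct and follows essentially the same route as the paper: both compute $\Phi_\alpha'$ and $\Phi_\alpha''$ directly from $F'=-1/f$ and $F''=f'/f^2$, arriving at the identical formulas $\Phi_\alpha''(u)=(\alpha-1)^{\alpha}F(u)^{-\alpha-1}f(u)^{-2}\big(\alpha-f'(u)F(u)\big)$ (for $\alpha>1$) and $\Phi_1''(u)=F(u)^{-2}f(u)^{-2}\big(1-f'(u)F(u)\big)$, from which (iii) is immediate. Your additional coordinate-free observation that $\Phi_\alpha''\ge 0$ is equivalent to $f'(u)\le f_\alpha'(\Phi_\alpha(u))=\alpha/F(u)$ is a nice conceptual gloss not present in the paper, but the substance is the same.
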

\begin{proof}
It is clear that $\Phi_{\alpha}\in C^2$, since $F(u)\in C^2$.
We have
\begin{equation}\label{S2L1E1}
\Phi'_{\alpha}(u)=
\begin{cases}
\frac{(\alpha-1)^{\alpha}}{f(u)F(u)^{\alpha}} & \textrm{if}\ \alpha>1,\\
\frac{1}{f(u)F(u)} & \textrm{if}\ \alpha=1,
\end{cases}
\quad
\Phi''_{\alpha}(u)=
\begin{cases}
(\alpha-1)^{\alpha}\frac{\alpha-f'(u)F(u)}{f(u)^2F(u)^{\alpha+1}} & \textrm{if}\ \alpha>1.\\
\frac{1-f'(u)F(u)}{f(u)^2F(u)^2} & \textrm{if}\ \alpha=1,
\end{cases}
\end{equation}
The assertion (ii) follows from (\ref{S2L1E1}).
If $u\ge u_0$, then $\alpha-f'(u)F(u)\ge 0$, and hence (iii) follows from (\ref{S2L1E1}).
\end{proof}

\begin{proposition}\label{P1}
Let $N\ge 1$ and $0<\theta\le 2$.
Let $\Psi(\,\cdot\,)\in C[0,\infty)$ be a convex function, and let $\psi(x)$ be a nonnegative function.
The the following (i) and (ii) hold:\\
(i) If $\psi\in L^{\gamma}_{\rm{ul},\rho}(\RN)$ for some $1\le\gamma\le\infty$, then
\begin{equation}\label{P1E0}
\Psi(S(t)[\psi](x))\le S(t)[\Psi(\psi)](x)
\ \ \textrm{for}\ \ x\in\RN,\ t>0.
\end{equation}
(ii) Assume in addition that $\Psi\ge 0$ and $\Psi\not\equiv 0$.
If $\Psi(\psi)\in L^{\gamma}_{\rm{ul},\rho}(\RN)$ for some $1\le\gamma\le\infty$, then (\ref{P1E0}) holds.
\end{proposition}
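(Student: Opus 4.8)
The plan is to realize $S(t)$ as an averaging operator against a probability measure and then invoke the scalar Jensen inequality through a supporting line. Fix $x\in\RN$ and $t>0$ and set $d\mu_{x,t}(y):=G(x-y,t)\,dy$. Since $G>0$ on $\RN\times(0,\infty)$ and $\int_{\RN}G(x-y,t)\,dy=1$ (translation invariance of the last property of $G$ listed above), the measure $\mu_{x,t}$ is a Borel probability measure on $\RN$, and for every nonnegative measurable $v$ we have $S(t)[v](x)=\int_{\RN}v\,d\mu_{x,t}\in[0,\infty]$.

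First I would dispose of finiteness. In case (i), $\gamma\ge1$ gives $L^{\gamma}_{\rm{ul},\rho}(\RN)\subset L^{1}_{\rm{ul},\rho}(\RN)$, and the decay bound $G(z,t)=t^{-N/\theta}K(t^{-1/\theta}|z|)$ with $|z|^{N+\theta}K(|z|)\le C$ from Proposition~\ref{S2S2P1}, together with the local boundedness of $G(\cdot,t)$, yields $a:=S(t)[\psi](x)=\int_{\RN}\psi\,d\mu_{x,t}<\infty$ (split $\RN$ into $B_x(\rho)$ and dyadic annuli); in particular $\psi\in L^{1}(\mu_{x,t})$. In case (ii), the same inclusion gives $\Psi(\psi)\in L^1_{\rm{ul},\rho}(\RN)$, hence $S(t)[\Psi(\psi)](x)<\infty$. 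Moreover a nonnegative convex function on $[0,\infty)$ is either coercive (tends to $+\infty$) or bounded, and in the coercive case an affine lower bound $\Psi(s)\ge cs-C'$ with $c>0$ gives $\psi\le c^{-1}(\Psi(\psi)+C')\in L^1_{\rm{ul},\rho}(\RN)$, so again $a<\infty$.

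Next comes the main estimate. If $\psi=0$ a.e.\ both sides of (\ref{P1E0}) equal $\Psi(0)$; so assume $\psi\not\equiv0$, which since $G>0$ forces $a>0$. When $a<\infty$, choose a subgradient $m\in\R$ of the convex function $\Psi$ at the interior point $a$, so $\Psi(s)\ge\Psi(a)+m(s-a)$ for $s>0$ and, letting $s\to0^{+}$ and using continuity of $\Psi$ at $0$, for all $s\ge0$. Substituting $s=\psi(y)$ and integrating against $\mu_{x,t}$---legitimate because $\psi\in L^{1}(\mu_{x,t})$, so the affine minorant is $\mu_{x,t}$-integrable and the negative part of $\Psi(\psi)$ is dominated by the $\mu_{x,t}$-integrable function $|\Psi(a)|+|m|(\psi+a)$---we obtain
\begin{align*}
S(t)[\Psi(\psi)](x)=\int_{\RN}\Psi(\psi)\,d\mu_{x,t}&\ge\int_{\RN}\bigl(\Psi(a)+m(\psi-a)\bigr)\,d\mu_{x,t}\\
&=\Psi(a)+m\bigl(S(t)[\psi](x)-a\bigr)=\Psi(a),
\end{align*}
which is exactly (\ref{P1E0}). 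The only remaining possibility is $a=+\infty$, which by the dichotomy above can occur only when $\Psi$ is bounded; extending $\Psi$ continuously to $[0,\infty]$ by $\Psi(\infty):=\lim_{s\to\infty}\Psi(s)=\inf_{s\ge0}\Psi(s)$, the pointwise bound $\Psi(\psi(y))\ge\Psi(\infty)$ yields $S(t)[\Psi(\psi)](x)\ge\Psi(\infty)=\Psi(S(t)[\psi](x))$.

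The scalar Jensen inequality itself is classical, so the real work is the integrability bookkeeping: in (i) the right-hand side of (\ref{P1E0}) may genuinely be $+\infty$, and one must avoid the indeterminate form $\infty-\infty$ when integrating the affine minorant; in (ii) one is not handed local integrability of $\psi$ up front. Both points are handled by the same affine lower bound $\Psi(s)\ge\Psi(a)+m(s-a)$, which simultaneously controls the negative part of $\Psi(\psi)$ and, in the coercive case, upgrades $\Psi(\psi)\in L^1_{\rm{ul},\rho}(\RN)$ to $\psi\in L^1_{\rm{ul},\rho}(\RN)$. Nothing else exceeds routine measure theory.
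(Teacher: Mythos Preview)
Your proof is correct and takes a somewhat different route from the paper's. For (i) the paper truncates to $\psi_n=\psi\chi_{B(n)}\in L^1(\RN)$, applies the classical Jensen inequality to each $\psi_n$, and passes to the limit by monotone convergence; you instead work directly with the probability measure $G(x-\cdot,t)\,dy$ and the supporting-line form of Jensen, which avoids truncation altogether and makes the integrability bookkeeping (the $\infty-\infty$ issue) explicit. For (ii) the paper simply asserts an affine lower bound $\Psi(u)\ge au-b$ with $a>0$ to force $\psi\in L^1_{\rm{ul},\rho}(\RN)$ and then reduces to (i). Your coercive/bounded dichotomy is in fact more accurate here: the paper's linear lower bound with positive slope fails for bounded nonincreasing convex $\Psi$ such as $e^{-u}$, a case you handle separately via the extension $\Psi(\infty)=\inf\Psi$. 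That edge case never arises in the paper's applications (every $\Psi$ actually used, e.g.\ $\Phi_\alpha$ or $\exp$, is coercive), so your treatment closes a small gap only in the stated generality of the proposition.
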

\begin{proof}
Since we could not find a proof of Jensen's inequality for $\psi\in L^{\gamma}_{\rm{ul},\rho}(\RN)$ in literature, we show the proof.\\
(i) Since $L^{\gamma}_{\rm{ul},\rho}(\RN)\subset L^1_{\rm{ul},\rho}(\RN)$, we see that $\psi\in L^1_{\rm{ul},\rho}(\RN)$.
Let $\chi_{B(n)}(x)$ be the indicator function supported on $\overline{B(n)}:=\{x\in\RN|\ |x|\le n\}$, and let $\psi_n(x):=\psi(x)\chi_{B(n)}(x)$.
We can easily see that $\psi_n\in L^1(\RN)$. Since
\[
G(x-\,\cdot\,,t)\ge 0,\ \int_{\RN}G(x-y,t)dy=1\ \ \textrm{and}\ \ \psi_n\in L^1(\RN),
\]
by the classical Jensen's inequality we have
\begin{equation}\label{P1E1}
\Psi(S(t)[\psi_n])(x)\le S(t)[\Psi(\psi_n)](x).
\end{equation}
Since $\psi_n\le\psi$, we see that
\begin{equation}\label{P1E2}
S(t)[\Psi(\psi_n)](x)\le S(t)[\Psi(\psi)](x).
\end{equation}
On the other hand, by the monotone convergence theorem we see that,
\[
\int_{\RN}G(x-y,t)\psi_n(y)dy\to\int_{\RN}G(x-y,t)\psi(y) dy\ \ \textrm{as}\ \ n\to\infty,
\]
and hence
\begin{equation}\label{P1E3}
S(t)[\psi_n](x)\to S(t)[\psi](x)\ \ \textrm{as}\ \ n\to\infty.
\end{equation}
Since $\Psi$ is continuous, by (\ref{P1E1}), (\ref{P1E2}) and (\ref{P1E3}) we have
\[
S(t)[\Psi(\psi)](x)\ge\Psi(S(t)[\psi_n])(x)\to \Psi(S(t)[\psi])(x)\ \ \textrm{as}\ \ n\to\infty.
\]
We obtain (\ref{P1E0}).\\
(ii) If we show that $\psi\in L^1_{\rm{ul},\rho}(\RN)$, then we can use (i), and the conclusion holds.
Hereafter, we show that $\psi\in L^1_{\rm{ul},\rho}(\RN)$.
Because of the assumption on $\Psi$, there are $a>0$ and $b\in\R$ such that $\Psi(u)\ge au-b$ for $u\ge 0$.
When $\gamma=1$ or $\gamma=\infty$, there is $C_0>0$ such that
\begin{equation}\label{P1E4}
a\int_{B_y(\rho)}\psi(x)dx
\le b|B_y(\rho)|+\int_{B_y(\rho)}\Psi(\psi(x))dx<C_0
\ \ \textrm{uniformly for}\ y\in\RN.
\end{equation}
We see that $\psi\in L^1_{\rm{ul},\rho}(\RN)$,  and hence the proof is complete.
When $1<\gamma<\infty$, we have
\[
\int_{B_y(\rho)}\Psi(\psi(x))dx\le\left\|\Psi(\psi)\right\|_{L^{\gamma}(B_y(\rho))}\left\|1\right\|_{L^{\gamma'}(B_y(\rho))},
\]
where $\gamma':=\gamma/(1-\gamma)$.
By the same inequality as (\ref{P1E4}) we see that $\psi\in L^1_{\rm{ul},\rho}(\RN)$.
The proof is complete.
\end{proof}

\section{Existence in the subcritical case}
\subsection{Algebraic growth case}
In this subsection we mainly prove Theorem~\ref{A}~(i-1).
Let $q>1$ and $r>\max\{N/\theta,q-1\}$ be given in Theorem~\ref{A}~(i-1).
Let
\begin{equation}\label{S3E-2}
0<\e<\min\left\{\frac{\theta r}{N}-1,r-q+1,2(q-1)\right\}
\ \ \textrm{and}\ \ \delta:=\frac{\e}{2}.
\end{equation}
Let $f$ be a function such that (\ref{F1}) with $q>1$ holds.
Then, there is $u_1>0$ such that
\begin{equation}\label{S3E-1}
f'(u)F(u)\le q+\delta\ \ \textrm{for}\ \ u\ge u_1.
\end{equation}
For simplicity we write $q_0:=q+\delta$ and $p_0:=q_0/(q_0-1)$.
In particular, $1<q<q_0$.
\begin{lemma}\label{S3L2}
The following (i) and (ii) hold:\\
(i) There is $C>0$ such that
\[
F(u)^{\frac{-1}{p_0-1}+\e}\le Cu
\ \ \textrm{for}\ \ u\ge 1.
\]
(ii) There is $u_2>0$ such that if $u\ge u_2$, then
\[
F\left(\frac{u}{\sqrt{1+\sigma}}\right)\le (1+\sigma)^{p_0-1}F(u)
\ \ \textrm{for}\ \ 0\le\sigma\le 1.
\]
\end{lemma}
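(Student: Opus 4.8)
The plan is to deduce both estimates from monotonicity of products of the form $f(u)F(u)^{\kappa}$ for suitably chosen exponents $\kappa$, using only that $f'(u)F(u)\to q$ together with the strict inequalities packed into (\ref{S3E-2}) (in particular $\e<2(q-1)$) and the relation $\delta=\e/2$. Since $f\in C^1(0,\infty)$ with $f>0$ there, $F$ is $C^2$, positive, continuous and strictly decreasing to $0$ on $(0,\infty)$; hence it suffices to prove each bound for $u$ beyond some large threshold and then absorb the compact range into the constant $C$ in (i), resp. enlarge $u_2$ in (ii).

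\emph{Part (i).} Writing $\frac{1}{p_0-1}=q_0-1=q-1+\e/2$, the exponent equals $\frac{-1}{p_0-1}+\e=-\beta$ with $\beta:=q-1-\e/2>0$. Differentiating gives $\frac{d}{du}\bigl[F(u)^{-\beta}\bigr]=\beta\bigl(f(u)F(u)^{\beta+1}\bigr)^{-1}$ with $\beta+1=q-\e/2$, so it is enough to bound $f(u)F(u)^{q-\e/2}$ away from $0$ for $u$ large. Because $q-\e/2<q=\lim_{u\to\infty}f'(u)F(u)$, there is $u_3\ge1$ with $f'(u)F(u)\ge q-\e/2$ for $u\ge u_3$, hence $\frac{d}{du}\bigl[f(u)F(u)^{q-\e/2}\bigr]=F(u)^{q-\e/2-1}\bigl(f'(u)F(u)-(q-\e/2)\bigr)\ge0$ on $[u_3,\infty)$ and therefore $f(u)F(u)^{q-\e/2}\ge f(u_3)F(u_3)^{q-\e/2}=:c_3>0$ there. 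Thus $\frac{d}{du}[F(u)^{-\beta}]\le\beta/c_3$ for $u\ge u_3$, which integrates to $F(u)^{-\beta}\le F(u_3)^{-\beta}+(\beta/c_3)u$ on $[u_3,\infty)$; combined with the (bounded) values of $F(u)^{-\beta}$ on $[1,u_3]$ this yields $F(u)^{-\beta}\le Cu$ for $u\ge1$.

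\emph{Part (ii).} Put $g:=\log F$, so $g'(u)=-1/(f(u)F(u))$. Taking logarithms, the asserted inequality is equivalent to $g\bigl(u/\sqrt{1+\sigma}\bigr)-g(u)=\int_{u/\sqrt{1+\sigma}}^{u}\frac{ds}{f(s)F(s)}\le(p_0-1)\log(1+\sigma)$. The crucial estimate is $\limsup_{u\to\infty}\frac{u}{f(u)F(u)}\le\frac{1}{q-1}$: for each $\eta\in(0,q-1)$ and $u$ large one has $\frac{d}{du}[f(u)F(u)]=f'(u)F(u)-1\ge q-\eta-1>0$, so $f(u)F(u)\ge(q-\eta-1)u+O(1)$, and letting $\eta\downarrow0$ gives the claim. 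Since (\ref{S3E-2}) yields $\frac{1}{q-1}<\frac{2}{q-1+\e/2}=2(p_0-1)$, there is $u_4\ge1$ with $\frac{s}{f(s)F(s)}\le2(p_0-1)$ for all $s\ge u_4$. Now take $u_2:=\sqrt2\,u_4$: for $u\ge u_2$ and $\sigma\in[0,1]$ the interval $[u/\sqrt{1+\sigma},u]$ lies in $[u/\sqrt2,u]\subset[u_4,\infty)$, so $\int_{u/\sqrt{1+\sigma}}^{u}\frac{ds}{f(s)F(s)}\le2(p_0-1)\int_{u/\sqrt{1+\sigma}}^{u}\frac{ds}{s}=2(p_0-1)\log\sqrt{1+\sigma}=(p_0-1)\log(1+\sigma)$, as required.

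\emph{Main obstacle.} The computations are short once the right exponents are in hand; the genuine difficulty is that the naive bounds are too lossy. Using merely that $f$ is nondecreasing, or crude logarithmic integration of $g'$, produces a power of $u$ strictly larger than $1$ in (i) and a constant strictly larger than $p_0-1$ in (ii). What rescues both is the slack $\e<2(q-1)$ from (\ref{S3E-2}) together with $\delta=\e/2$: it forces $\beta+1=q-\e/2<q$ in (i) (so that $f(u)F(u)^{\beta+1}$ is eventually \emph{nondecreasing}, hence bounded below), and $\frac1{q-1}<2(p_0-1)$ in (ii) (so the $\limsup$ estimate has room to spare). Everything else --- continuity and positivity of $F$ on $[1,\infty)$ to handle the bounded range, and the existence of the thresholds $u_3,u_4$ --- is routine and uses only $\lim_{u\to\infty}f'(u)F(u)=q$.
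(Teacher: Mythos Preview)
Your proof is correct and follows essentially the same strategy as the paper: both parts hinge on the slack $\e<2(q-1)$ together with $f'(u)F(u)\to q$, and (ii) in particular reduces in each version to the eventual inequality $v/(f(v)F(v))\le 2(p_0-1)$, which the paper writes as $\eta(v)=f(v)F(v)-\tfrac{q_0-1}{2}v\ge 0$. The only differences are organizational---the paper packages the monotonicity via auxiliary functions $\xi,\eta$ (of $u$ in (i), of $\sigma$ in (ii)) and checks their signs, whereas you bound derivatives of $F(u)^{-\beta}$ and of $\log F$ directly and integrate.
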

\begin{proof}
(i) Let $\xi(u):=\log u-\log F(u)^{\frac{1}{p_0-1}-\e}$.
Then,
\[
\xi'(u)=\frac{f(u)F(u)-\left(\frac{1}{p_0-1}-\e\right)u}{uf(u)F(u)}.
\]
Let $\eta(u):=f(u)F(u)-\left(\frac{1}{p_0-1}-\e\right)u$.
Then,
\begin{align*}
\eta'(u)
&=f'(u)F(u)-1-\frac{1}{p_0-1}+\e\\
&\to q-1-(q_0-1)+\e\qquad (u\to\infty)\\
&=\e/2>0.
\end{align*}
Thus, $\xi(u)$ is increasing for large $u$.
Since $\xi(u)$ is continuous on $u\ge 1$,  there is $C>0$ such that $\xi(u)>\log C$ for $u\ge 1$.
The conclusion of (i) holds.\\
(ii) Since $p_0-1=1/(q_0-1)$, we define $\xi(\sigma):=\log F(u)+\frac{1}{q_0-1}\log (1+\sigma)-\log F(\frac{u}{\sqrt{1+\sigma}})$.
Then,
\[
\xi'(\sigma)=\frac{1}{(1+\sigma)(q_0-1)}\frac{f(\frac{u}{\sqrt{1+\sigma}})F(\frac{u}{\sqrt{1+\sigma}})-\frac{(q_0-1)u}{2\sqrt{1+\sigma}}}{f(\frac{u}{\sqrt{1+\sigma}})F(\frac{u}{\sqrt{1+\sigma}})}.
\]
Let $\eta(v):=f(v)F(v)-(q_0-1)v/2$.
Then
\[
\eta'(v)=f'(v)F(v)-1-\frac{q_0-1}{2}\to\frac{q-1-\delta}{2}(>0)\ \ \textrm{as}\ \ v\to\infty.
\]
Thus, $\eta(v)\to\infty$ as $v\to\infty$, and hence there is $u_2>0$ such that $\eta(v)\ge 0$ for $v\ge u_2/\sqrt{2}$.
If $u\ge u_2$, then $f(\frac{u}{\sqrt{1+\sigma}})F(\frac{u}{\sqrt{1+\sigma}})-\frac{(q-1)u}{2\sqrt{1+\sigma}}=\eta(\frac{u}{\sqrt{1+\sigma}})\ge 0$ for $0\le\sigma\le 1$, and hence $\xi'(\sigma)\ge 0$ for $0\le\sigma\le 1$.
Since $\xi(0)=0$, we see that if $u\ge u_2$, then $\xi(\sigma)\ge 0$ for $0\le\sigma\le 1$.
The conclusion of (ii) holds.
\end{proof}

Hereafter, we define $u_0$ by
\[
u_0:=\max\{u_1,u_2\},
\]
where $u_1$ is given in (\ref{S3E-1}) and $u_2$ is given in Lemma~\ref{S3L2}~(ii).
\begin{lemma}\label{S3L1}
Let $N\ge 1$ and $0<\theta\le 2$.
Let $\psi(x)\in L^{\gamma}_{\rm{ul},\rho}(\RN)$, $1\le\gamma\le\infty$, be a function such that $\psi(x)\ge u_0$.
Then the following holds:
\begin{equation}\label{S3L1E0}
S(t)[\psi](x)\le F^{-1}\left(F_{q_0}\left(S(t)\left[F^{-1}_{q_0}\left(F(\psi)\right)\right]\right)\right)(x)
\ \ \textrm{for}\ \ x\in\RN\ \textrm{and}\ t>0,
\end{equation}
where $F_{q_0}$ is defined by (\ref{Fqq}) with $q=q_0$.
\end{lemma}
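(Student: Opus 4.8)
The plan is to read (\ref{S3L1E0}) as a Jensen-type inequality for the function $\Phi_{q_0}:=F_{q_0}^{-1}\circ F$ supplied by Lemma~\ref{S2L1}, taken with $\alpha=q_0$; this is legitimate because $q_0=q+\delta\in[q,\infty)$ and, by (\ref{S3E-1}), $f'(u)F(u)\le q_0$ for $u\ge u_1$, with $u_0\ge u_1$. Since $\Phi_{q_0}^{-1}=F^{-1}\circ F_{q_0}$ and $\Phi_{q_0}(\psi)=F_{q_0}^{-1}(F(\psi))$, and since $\Phi_{q_0}$ is strictly increasing by Lemma~\ref{S2L1}~(ii), the inequality (\ref{S3L1E0}) is equivalent to
\[
\Phi_{q_0}\big(S(t)[\psi](x)\big)\le S(t)\big[\Phi_{q_0}(\psi)\big](x)\qquad(x\in\RN,\ t>0),
\]
provided (see the last paragraph) that the right-hand side lies in the range of $\Phi_{q_0}$. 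The only obstacle to invoking Proposition~\ref{P1} directly is that Lemma~\ref{S2L1}~(iii) gives convexity of $\Phi_{q_0}$ only on $[u_0,\infty)$, not on all of $[0,\infty)$.

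To circumvent this I would work with the tangent-line extension of $\Phi_{q_0}$ at $u_0$, namely
\[
\tilde\Phi(u):=
\begin{cases}
\Phi_{q_0}(u_0)+\Phi_{q_0}'(u_0)(u-u_0) & 0\le u\le u_0,\\
\Phi_{q_0}(u) & u\ge u_0.
\end{cases}
\]
Since $\Phi_{q_0}\in C^2(0,\infty)$, values and first derivatives match at $u_0$, so $\tilde\Phi\in C^1[0,\infty)\subset C[0,\infty)$; moreover $\tilde\Phi'$ is nondecreasing on $[0,\infty)$, being the constant $\Phi_{q_0}'(u_0)>0$ on $[0,u_0]$ and being $\Phi_{q_0}'$ on $[u_0,\infty)$, which is nondecreasing there by Lemma~\ref{S2L1}~(iii). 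Hence $\tilde\Phi$ is convex on $[0,\infty)$, strictly increasing, and coincides with $\Phi_{q_0}$ on $[u_0,\infty)$.

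Next I would apply Proposition~\ref{P1}~(i) with $\Psi=\tilde\Phi$ and the given $\psi\in L^{\gamma}_{\rm ul,\rho}(\RN)$, which is nonnegative and for which $S(t)[\psi]$ is well defined since $L^{\gamma}_{\rm ul,\rho}(\RN)\subset L^{1}_{\rm ul,\rho}(\RN)$, obtaining $\tilde\Phi(S(t)[\psi](x))\le S(t)[\tilde\Phi(\psi)](x)$. Because $\psi\ge u_0$ and $\int_{\RN}G(x-y,t)\,dy=1$, averaging gives $S(t)[\psi](x)\ge u_0$, so on both sides $\tilde\Phi$ may be replaced by $\Phi_{q_0}$, yielding $\Phi_{q_0}(S(t)[\psi])\le S(t)[\Phi_{q_0}(\psi)]$. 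Finally, since $S(t)[\Phi_{q_0}(\psi)]\ge\Phi_{q_0}(S(t)[\psi])\ge\Phi_{q_0}(u_0)$ and $\Phi_{q_0}$ maps $[u_0,\infty)$ increasingly onto $[\Phi_{q_0}(u_0),\infty)$ (using $q_0>1$, so $\Phi_{q_0}(u)\to\infty$ as $u\to\infty$, with the convention $\Phi_{q_0}^{-1}(+\infty)=+\infty$), we may apply $\Phi_{q_0}^{-1}=F^{-1}\circ F_{q_0}$ to both sides and recover (\ref{S3L1E0}).

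The heart of the argument is the passage to $\tilde\Phi$: the hypothesis $\psi\ge u_0$ is exactly what propagates under the heat semigroup, so that every argument of $\Phi_{q_0}$ occurring in the inequality remains in the region $[u_0,\infty)$ where $\Phi_{q_0}$ is convex, which is what lets Proposition~\ref{P1} be brought to bear; the remaining bookkeeping (monotonicity of $\Phi_{q_0}$, identification of its range) is routine.
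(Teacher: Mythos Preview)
Your argument is correct and follows essentially the same route as the paper: apply Jensen's inequality (Proposition~\ref{P1}) to $\Phi_{q_0}=F_{q_0}^{-1}\circ F$, then invert using the monotonicity from Lemma~\ref{S2L1}~(ii). Your tangent-line extension $\tilde\Phi$ is a clean way to meet the global-convexity hypothesis of Proposition~\ref{P1}, a technicality the paper's proof handles only implicitly by appealing to the proposition under the standing constraint $\psi\ge u_0$.
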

\begin{proof}
Let $\Phi_{\alpha}$ be defined by (\ref{S2L1E0}) with $\alpha=q_0$.
Applying Lemma~\ref{S2L1}~(iii) with $\alpha$, we see that $\Phi_{\alpha}(u)$ is convex in $[u_0,\infty)$.
Since $\psi\ge u_0$, by Proposition~\ref{P1} we have that $\Phi_{\alpha}(S(t)\psi)\le S(t)\Phi_{\alpha}(\psi)$.
It follows from (\ref{S2L1E1}) that $\Phi'_{\alpha}>0$, and hence $\Phi_{\alpha}^{-1}$ is increasing and
\begin{equation}\label{S3L1E1}
S(t)\psi\le\Phi^{-1}_{\alpha}(S(t)\Phi_{\alpha}(\psi)).
\end{equation}
Since $\Phi_{\alpha}(u)=F^{-1}_{\alpha}\left(F(u)\right)$ and $\Phi^{-1}_{\alpha}(u)=F^{-1}\left(F_{\alpha}(u)\right)$, the inequality (\ref{S3L1E0}) follows from (\ref{S3L1E1}).
\end{proof}

Let us introduce the following function:
\begin{align}
\bu(t):
&=\left(F^{-1}\circ F_{q_0}\circ (1+\sigma)S(t)\circ F_{q_0}^{-1}\circ F\right)(\phi_0)\label{SS}\\
&=F^{-1}\left(\left((1+\sigma)S(t)\left[F(\phi_0)^{\frac{-1}{p_0-1}}\right]\right)^{-(p_0-1)}\right),\nonumber
\end{align}
where $\phi_0(x):=\max\{\phi(x),u_0\}$, $0<\sigma\le 1$ and we define $\left((1+\sigma)S(t)\right)[u]=(1+\sigma)(S(t)[u])$.
By (\ref{S2L1E1}) we see that $F^{-1}_{q_0}(F(u))$ is increasing in $u$.
We easily see that
\begin{equation}\label{SSE1}
\bu\ge u_0.
\end{equation}
Since $\left\|\phi_0\right\|_{L^r_{\rm{ul},\rho}(\RN)}\le \left\|\phi\right\|_{L^r_{\rm{ul},\rho}(\RN)}+\left\|u_0 \right\|_{L^r_{\rm{ul},\rho}(\RN)}<\infty$, we see that $\phi_0\in L^r_{\rm{ul},\rho}(\RN)$.
By (\ref{S3L1E0}) and (\ref{SS}) we have
\begin{equation}\label{SS2}
S(t)\phi_0\le F^{-1}\left((1+\sigma)^{p_0-1}F(\bu)\right).
\end{equation}

\begin{lemma}\label{S3L3}
Let $N\ge 1$ and $0<\theta\le 2$.
Assume that $r>N/\theta$, $r>q-1$, that $\phi\ge 0$ and that $f$ satisfies (\ref{F1}) with $q>1$.
If $F(\phi)^{-r}\in L^1_{\rm{ul},\rho}(\RN)$, then there is $T>0$ such that $\bu(t)$ defined by (\ref{SS}) is a supersolution of (\ref{S1E1}) for $0<t<T$.
\end{lemma}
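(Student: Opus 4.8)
The plan is to verify directly that $\bu(t)$ defined by (\ref{SS}) satisfies the integral inequality $\calF[\bu](t)\le\bu(t)$, i.e.,
\[
S(t)\phi+\int_0^tS(t-s)f(\bu(s))\,ds\le\bu(t)\ \ \textrm{a.e.}\ x\in\RN,\ 0<t<T,
\]
for a suitably small $T>0$ and a suitably chosen $\sigma\in(0,1]$. Since $\phi\le\phi_0$ and $S(t)$ is order-preserving, it suffices to bound $S(t)\phi_0$ and the Duhamel term separately. For the first term, (\ref{SS2}) already gives $S(t)\phi_0\le F^{-1}\!\big((1+\sigma)^{p_0-1}F(\bu)\big)$, so I must absorb the factor $(1+\sigma)^{p_0-1}$; this is where Lemma~\ref{S3L2}~(ii) enters, allowing me to trade $(1+\sigma)^{p_0-1}F(\bu)$ against $F(\bu/\sqrt{1+\sigma})$ provided $\bu\ge u_0\ge u_2$, which holds by (\ref{SSE1}). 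Hence $S(t)\phi_0\le \bu/\sqrt{1+\sigma}$, leaving a margin of $(1-1/\sqrt{1+\sigma})\bu$ to accommodate the nonlinear term.

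Next I would estimate the Duhamel term. The key is that $f(\bu(s))$ can be controlled in terms of a negative power of $F(\bu(s))$, hence a positive power of $w(s):=(1+\sigma)S(s)[F(\phi_0)^{-1/(p_0-1)}]$, because $\bu(s)=F^{-1}(w(s)^{-(p_0-1)})$, i.e. $F(\bu(s))=w(s)^{-(p_0-1)}$. By (\ref{S3E-1}) and Lemma~\ref{S2L2} (with $q_0=q+\delta$), $f(\bu)\le C\,F(\bu)^{-q_0}=C\,w(s)^{q_0(p_0-1)}=C\,w(s)^{p_0}$, using $q_0(p_0-1)=p_0$. Since $w(s)=(1+\sigma)S(s)\psi_0$ with $\psi_0:=F(\phi_0)^{-1/(p_0-1)}$, and Jensen/convexity via Proposition~\ref{P1} together with the power comparison give $S(t-s)[w(s)^{p_0}]\le$ a constant times a power of $S(t)\psi_0$ after using the semigroup property $S(t-s)S(s)=S(t)$ and Proposition~\ref{S2L3} to trade integrability. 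Concretely I would apply the $L^{p}_{\rm ul,\rho}$–$L^{\infty}_{\rm ul,\rho}$ smoothing (\ref{S2L3E1}) to the function $\psi_0^{p_0}$ and its iterates, picking up a factor $t^{-\frac{N}{\theta}\cdot\frac{p_0}{r}}$; the condition $r>N/\theta$ (subcriticality) is exactly what makes the relevant exponent of $t$ in $\int_0^t(t-s)^{-a}s^{-b}\,ds$ integrable, i.e. $<1$. The remaining role of $r>q-1=1/(p_0-1)$ at the critical value $q_0\to q$, together with Lemma~\ref{S3L2}~(i) (which says $F(u)^{-1/(p_0-1)+\e}\le Cu$, so that $\psi_0\in L^{r/(1-\e(p_0-1))}_{\rm ul,\rho}$, an integrability slightly better than $L^r_{\rm ul,\rho}$), is to guarantee that all the Lebesgue exponents appearing in the iterated smoothing estimates are admissible and that the time-integral converges. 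Collecting constants, the Duhamel term is $\le C\,T^{\kappa}\,\big(\text{power of }S(t)\psi_0\big)$ with $\kappa>0$, which by the same comparison is $\le C\,T^{\kappa}\,\bu(t)$.

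Finally I would combine the two bounds: $\calF[\bu](t)\le \frac{1}{\sqrt{1+\sigma}}\bu(t)+C T^{\kappa}\bu(t)$, and choose first $\sigma\in(0,1]$ (hence fixing $u_0$, $p_0$, etc.) and then $T>0$ small enough that $\frac{1}{\sqrt{1+\sigma}}+CT^{\kappa}\le 1$; then $\calF[\bu]\le\bu$ on $\RN\times(0,T)$, so $\bu$ is a supersolution. I also need to check $\bu(t)<\infty$ a.e., which follows since $F(\phi_0)^{-r}\in L^1_{\rm ul,\rho}$ forces $\psi_0=F(\phi_0)^{-1/(p_0-1)}\in L^{r(p_0-1)}_{\rm ul,\rho}\subset L^1_{\rm ul,\rho}$ (using $r(p_0-1)\ge 1$, i.e. $r\ge q-1$), so $S(t)\psi_0$ is finite a.e. by the decay estimate (\ref{Gdecay}), and $F^{-1}$ maps $(0,\infty)$ into $(0,\infty)$. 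The main obstacle is the bookkeeping in the second paragraph: tracking the precise Lebesgue exponents through the nonlinear estimate $f(\bu)\le C\,w^{p_0}$ and the semigroup smoothing so that the time-integral $\int_0^t(t-s)^{-\frac{N}{\theta}(\frac1{\alpha}-\frac1{\beta})}s^{-\cdots}\,ds$ is finite with a positive power of $T$ to spare; this is exactly where the hypotheses $r>N/\theta$ and $r>q-1$, and the $\e$–$\delta$ choices in (\ref{S3E-2}), are used, and getting the exponents to line up is the delicate point.
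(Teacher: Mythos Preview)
Your overall strategy matches the paper exactly: bound $S(t)\phi\le\bu/\sqrt{1+\sigma}$ via (\ref{SS2}) and Lemma~\ref{S3L2}~(ii), bound the Duhamel term by a positive power of $t$ times $\bu$, and add. However, your description of the Duhamel estimate contains errors that would derail the computation if followed literally.

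First, Jensen's inequality (Proposition~\ref{P1}) goes the wrong way here: since $u\mapsto u^{p_0}$ is convex, Jensen gives $(S(t-s)[S(s)\psi_0])^{p_0}\le S(t-s)[(S(s)\psi_0)^{p_0}]$, which is the opposite of what you need. The paper instead writes $(S(s)\psi_0)^{p_0}=(S(s)\psi_0)\cdot(S(s)\psi_0)^{p_0-1}$, bounds the second factor by $\|S(s)\psi_0\|_\infty^{p_0-1}$, and then the semigroup law yields $S(t-s)[S(s)\psi_0]=S(t)\psi_0$ exactly, independent of $s$; so the time integral is simply $\int_0^t\|S(s)\psi_0\|_\infty^{p_0-1}\,ds$, not a convolution $\int_0^t(t-s)^{-a}s^{-b}\,ds$. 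Second, the smoothing (\ref{S2L3E1}) is applied to $\psi_0\in L^{(p_0-1)r}_{{\rm ul},\rho}$ (this is where $(p_0-1)r\ge 1$, i.e.\ $r>q_0-1$, enters), not to $\psi_0^{p_0}$; the latter need not lie in any $L^s_{{\rm ul},\rho}$ with $s\ge 1$ under the stated hypotheses. Third, Lemma~\ref{S3L2}~(i) is not used to upgrade the integrability of $\psi_0$; rather, after a further split $S(t)\psi_0\le (S(t)\psi_0)^{1-(p_0-1)\e}\|S(t)\psi_0\|_\infty^{(p_0-1)\e}$, it converts the first factor, which equals a constant times $F(\bu)^{-1/(p_0-1)+\e}$, into $C\bu$. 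The extra $\|S(t)\psi_0\|_\infty^{(p_0-1)\e}$ together with $\int_0^t\|S(s)\psi_0\|_\infty^{p_0-1}\,ds$ produces the net power $t^{1-(1+\e)N/(\theta r)}$, positive precisely by (\ref{S3E-2}). With these corrections your outline becomes the paper's proof; note also that $u_0$ and $p_0$ are fixed by $\e,\delta$ in (\ref{S3E-2})--(\ref{S3E-1}) and do not depend on $\sigma$.
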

\begin{proof}
We show that $\calF[\bu]\le\bu$.
We note that $q_0\ge q>1$.
Since $\phi(x)\le\phi_0(x)$, by (\ref{SS2}) and Lemma~\ref{S3L2}~(ii) we have
\begin{equation}\label{S3L3E1}
S(t)\phi\le S(t)\phi_0\le F^{-1}\left((1+\sigma)^{p_0-1}F(\bu)\right)\le\frac{\bu}{\sqrt{1+\sigma}}.
\end{equation}
Because of (\ref{SSE1}), by Lemma~\ref{S2L2} we have that
\begin{equation}\label{S3L3E2}
f(\bu)\le\frac{f(u_0)F(u_0)^{q_0}}{F(\bu)^{q_0}}.
\end{equation}
By (\ref{S3E-2}) we see that $(p_0-1)r=r/(q+\e-1)\ge 1$.
By (\ref{S2L3E1}) we see that there is $T>0$ such that
\begin{equation}\label{S32L3E2+}
\left\|S(t)F(\phi_0)^{\frac{-1}{p_0-1}}\right\|_{\infty}
\le C t^{\frac{-N}{\theta r (p_0-1)}}
\left\|F(\phi_0)^{\frac{-1}{p_0-1}}\right\|_{L^{(p_0-1)r}_{\rm{ul},\rho}(\RN)}
\ \ \textrm{for}\ \ 0<t<T.
\end{equation}
Using (\ref{S3L3E2}), (\ref{S32L3E2+}) and Lemma~\ref{S3L2}~(i), we have
\begin{align*}
\int_0^tS(t-s)&f(\bu(s))ds
\le C\int_0^tS(t-s)\left[F[\bu]^{-q_0}\right]ds\\
&=C(1+\sigma)^{p_0}\int_0^tS(t-s)\left[\left(S(s)\left[F(\phi_0)^{\frac{-1}{p_0-1}}\right]\right)
\left(S(s)\left[F(\phi_0)^{\frac{-1}{p_0-1}}\right]\right)^{p_0-1}\right]ds\\
&\le C(1+\sigma)^{p_0}S(t)\left[F(\phi_0)^{\frac{-1}{p_0-1}}\right]\int_0^t\left\|\left(S(s)\left[F(\phi_0)^{\frac{-1}{p_0-1}}\right]\right)^{p_0-1}\right\|_{\infty}ds\\
&= C(1+\sigma)^{p_0}\left(S(t)\left[F(\phi_0)^{\frac{-1}{p_0-1}}\right]\right)^{1-(p_0-1)\e}\left\|S(t)\left[F(\phi_0)^{\frac{-1}{p_0-1}}\right]\right\|^{(p_0-1)\e}_{\infty}\\
&\qquad\times \int_0^t\left\|S(s)\left[F(\phi_0)^{\frac{-1}{p_0-1}}\right]\right\|^{p_0-1}_{\infty}ds\\
&\le C(1+\sigma)^{(p_0-1)(1+\e)}F(\bu)^{\frac{-1}{p_0-1}+\e}\left(Ct^{-\frac{N}{\theta r(p_0-1)}}\left\|F(\phi_0)^{\frac{-1}{p_0-1}}\right\|_{L^{(p_0-1)r}_{\rm{ul},\rho}(\RN)}\right)^{(p_0-1)\e}\\
&\qquad\times \int_0^t \left(Cs^{-\frac{N}{\theta r(p_0-1)}}\left\|F(\phi_0)^{\frac{-1}{p_0-1}}\right\|_{L^{(p_0-1)r}_{\rm{ul},\rho}(\RN)}\right)^{(p_0-1)}ds\\
&\le C\bu(1+\sigma)^{(p_0-1)(1+\e)}\left\|F(\phi_0)^{\frac{-1}{p_0-1}}\right\|^{(1+\e)(p_0-1)}_{L^{(p_0-1)r}_{\rm{ul},\rho}(\RN)}
t^{-\frac{N\e}{\theta r}}\int_0^ts^{-\frac{N}{\theta r}}ds\\
&\le C\bu(1+\sigma)^{(p_0-1)(1+\e)}\left\|F(\phi_0)^{-r}\right\|_{L^1_{\rm{ul},\rho}(\RN)}^{\frac{1+\e}{r}}t^{1-\frac{(1+\e)N}{\theta r}}.
\end{align*}
Here, $\int_0^ts^{-\frac{N}{\theta r}}ds$ in the above calculation is integrable.
By (\ref{S3E-2}) we see that $1-(1+\e)N/(\theta r)>0$, and hence there is a small $T>0$ such that if $0<t<T$, then
\begin{equation}\label{S3L3E3}
C(1+\sigma)^{(p_0-1)(1+\e)}
\left\|F(\phi_0)^{-r}\right\|_{L^1_{\rm{ul},\rho}(\RN)}^{\frac{1+\e}{r}}
t^{1-\frac{(1+\e)N}{\theta r}}
<\frac{\sqrt{1+\sigma}-1}{\sqrt{1+\sigma}}.
\end{equation}
Here, we can choose $T>0$, which is still denoted by $T$, such that both (\ref{S32L3E2+}) and (\ref{S3L3E3}) hold.
Using (\ref{S3L3E3}) and (\ref{S3L3E1}), we have
\begin{align*}
\calF[\bu]
&=S(t)\phi+\int_0^tS(t-s)f(\bu(s))ds\\
&\le\frac{\bu}{\sqrt{1+\sigma}}+\frac{(\sqrt{1+\sigma}-1)\bu}{\sqrt{1+\sigma}}=\bu
\ \ \textrm{for}\ \ 0<t<T.
\end{align*}
Therefore, $\bu$ is a supersolution.
\end{proof}
\begin{lemma}\label{S3L4}
Let $N\ge 1$ and $0<\theta\le 2$.
Assume that $r>N/\theta$, $r\ge q-1$, that $\phi\ge 0$, that $f$ satisfies (\ref{F1}) with $q>1$ and that $f'(u)F(u)\le q$ for large $u>0$.
If $F(\phi)^{-r}\in L^1_{\rm{ul},\rho}(\RN)$, then there is $T>0$ such that $\bu(t)$ defined by (\ref{SS}) is a supersolution of (\ref{S1E1}) for $0<t<T$.
\end{lemma}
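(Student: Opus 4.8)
The plan is to mirror the proof of Lemma~\ref{S3L3} almost verbatim, with the single change that the hypothesis $f'(u)F(u)\le q$ (rather than $\le q+\delta$) is available for large $u$. First I would reselect the parameters: since now $r\ge q-1$ is allowed to be an equality, I take $\e$ subject only to $0<\e<\theta r/N-1$ together with $\e<2(q-1)$ (the condition $\e<r-q+1$ is no longer needed since we will work with $q_0=q$ directly), set $\delta:=\e/2$, and crucially put $q_0:=q$, $p_0:=q/(q-1)$. With this choice, \eqref{S3E-1} holds with $q_0=q$ for $u\ge u_1$ by the hypothesis of the present lemma rather than merely by the limit definition of $q$.

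The key steps are then: (1) verify that Lemmas~\ref{S3L2} and \ref{S3L1} still go through with $q_0=q$ — this is immediate since their proofs only used $f'(u)F(u)F\le q_0$ for large $u$ and the arithmetic relation $p_0-1=1/(q_0-1)$; in particular the convexity of $\Phi_{q}$ on $[u_0,\infty)$ from Lemma~\ref{S2L1}(iii) holds because $q-f'(u)F(u)\ge 0$ for $u\ge u_0$. (2) Define $\bar u(t)$ by \eqref{SS} with this $q_0=q$, note $\bar u\ge u_0$ as in \eqref{SSE1}, and obtain \eqref{SS2} from \eqref{S3L1E0}. (3) Check $(p_0-1)r=r/(q-1)\ge 1$ using $r\ge q-1$, so the $L^{(p_0-1)r}_{\rm ul,\rho}$ norm in \eqref{S32L3E2+} is finite and the $L^p$-$L^q$ estimate \eqref{S2L3E1} applies. (4) Run the identical chain of inequalities as in the proof of Lemma~\ref{S3L3}: use Lemma~\ref{S3L2}(ii) to get $S(t)\phi\le\bar u/\sqrt{1+\sigma}$, use Lemma~\ref{S2L2} with $q_0=q$ to bound $f(\bar u)\le f(u_0)F(u_0)^{q}/F(\bar u)^{q}$, split $F(\bar u)^{-q}$ as a product of $S(s)[F(\phi_0)^{-1/(p_0-1)}]$ and its $(p_0-1)$-th power, pull out one factor from the time integral via the maximum principle, apply Lemma~\ref{S3L2}(i) to absorb the extra $\e$-power into $\bar u$, and estimate the remaining sup-norms by \eqref{S32L3E2+}. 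The outcome is again a bound of the form $C(1+\sigma)^{(p_0-1)(1+\e)}\|F(\phi_0)^{-r}\|_{L^1_{\rm ul,\rho}}^{(1+\e)/r}\,t^{1-(1+\e)N/(\theta r)}$ times $\bar u$, and since $1-(1+\e)N/(\theta r)>0$ by the choice of $\e$, we can pick $T>0$ so small that this is $\le(\sqrt{1+\sigma}-1)/\sqrt{1+\sigma}$, giving $\calF[\bar u]\le\bar u$ on $(0,T)$.

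The only place any care is needed — and hence the ``main obstacle,'' though it is a mild one — is the endpoint case $r=q-1$: with the previous choice $q_0=q+\delta$ one had $(p_0-1)r=r/(q+\e-1)>1$ strictly and also needed $\e<r-q+1$, whereas here $(p_0-1)r=r/(q-1)$ equals exactly $1$ when $r=q-1$. One must therefore confirm that $L^{1}_{\rm ul,\rho}$ is the right space in \eqref{S32L3E2+} — it is, since the hypothesis is precisely $F(\phi)^{-r}\in L^1_{\rm ul,\rho}(\RN)$, i.e. $F(\phi_0)^{-1/(p_0-1)}=F(\phi_0)^{-r/((p_0-1)r)}\in L^{(p_0-1)r}_{\rm ul,\rho}(\RN)$ — and that all exponents of $t$ appearing (namely $-N/(\theta r(p_0-1))=-N/(\theta r)\cdot(q-1)$ inside each $S(s)$ factor and $-N/(\theta r)$ in the final time integral) remain $<1$ in absolute value, which again follows from $\e<\theta r/N-1$, equivalently $N/(\theta r)<1/(1+\e)<1$. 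Everything else is a line-by-line repetition of Lemma~\ref{S3L3}'s computation, so I would simply write ``the proof is the same as that of Lemma~\ref{S3L3} with $q_0$ replaced by $q$, using the hypothesis $f'(u)F(u)\le q$ for large $u$ in place of \eqref{S3E-1}; we omit the details,'' possibly reproducing the final displayed estimate for the reader's convenience.
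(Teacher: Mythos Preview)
Your proposal is correct and takes essentially the same approach as the paper: the paper's proof also sets $q_0=q$ in \eqref{SS}, notes that Lemmas~\ref{S3L2} and \ref{S3L1} carry over (with the limits in the proof of Lemma~\ref{S3L2} now evaluating to $\e>0$ and $(q-1)/2>0$ respectively), replaces the strict inequality $(p_0-1)r>1$ by $(p_0-1)r=r/(q-1)\ge 1$, and otherwise repeats Lemma~\ref{S3L3} verbatim. Your extra remark isolating the endpoint $r=q-1$ as the only point needing attention is exactly the content of the paper's one-line observation, and your introduction of $\delta:=\e/2$ is harmless but unused once $q_0=q$.
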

\begin{proof}
Let $\bu$ be given by (\ref{SS}) with $q_0=q$.
Because $f'(u)F(u)\le q$ for large $u>0$, we can show that $\bu$ is a supersolution as follows:
Lemma~\ref{S3L2}~(i) and (ii) hold if the proofs are slightly modified.
Lemmas~\ref{S3L1} holds without modification.
In the proof of Lemma~\ref{S3L3} we use $(p_0-1)r=r/(q-1)\ge 1$ instead of $(p_0-1)r=r/(q+\e-1)\ge 1$.
Then, the conclusion of Lemma~\ref{S3L4} holds.
The details are omitted.
\end{proof}

\subsection{Exponential growth case}
We consider the case $q=1$.
Let $r$ be given in Theorem~\ref{B}~(i).

\begin{lemma}\label{S32L1}
Assume that $f$ satisfies (\ref{F1}) with $q=1$ and (\ref{F2}).
For $\sigma>0$, $\alpha>0$ and $C_1>0$, there is $u_1>0$ such that
\[
F(u-C_1F(u)^{\alpha})\le e^{\sigma}F(u)\ \ \textrm{for}\ \ u\ge u_1.
\]
\end{lemma}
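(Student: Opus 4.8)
The plan is to work from the exact identity
\[
\log\frac{F(v)}{F(u)}=\int_v^u\frac{dt}{f(t)F(t)}\qquad(0<v\le u),
\]
which follows from $\frac{d}{dt}\bigl(-\log F(t)\bigr)=1/(f(t)F(t))$, applied with $v:=u-C_1F(u)^{\alpha}$. First fix $u_0>0$ so that $1-\alpha/2\le f'(u)F(u)\le 1$ for $u\ge u_0$; this is possible since $q=\lim_{u\to\infty}f'(u)F(u)=1$ by (\ref{F1}) and $f'(u)F(u)\le 1$ for large $u$ by (\ref{F2}). Then $(fF)'(u)=f'(u)F(u)-1\le0$ on $[u_0,\infty)$, so $fF$ is nonincreasing there. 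Since $F$ is positive, decreasing and finite for large $u$ (hence a tail of a convergent integral), $F(u)\to0$ as $u\to\infty$; therefore $C_1F(u)^{\alpha}\to0$, and for $u$ large the point $v=u-C_1F(u)^{\alpha}$ again lies in $[u_0,\infty)$. For such $u$ we have $f(t)F(t)\ge f(u)F(u)$ for all $t\in[v,u]$, so
\[
\log\frac{F\bigl(u-C_1F(u)^{\alpha}\bigr)}{F(u)}=\int_v^u\frac{dt}{f(t)F(t)}\le\frac{u-v}{f(u)F(u)}=C_1\,\frac{F(u)^{\alpha}}{f(u)F(u)}.
\]
It therefore suffices to show $F(u)^{\alpha}/(f(u)F(u))\to0$ as $u\to\infty$: granting this, the right-hand side is $\le\sigma$ for all $u$ beyond some $u_1\ge u_0$, and exponentiating yields the assertion.

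To prove $F(u)^{\alpha}/(f(u)F(u))\to0$, set $g(u):=\log\!\bigl(F(u)^{\alpha}/(f(u)F(u))\bigr)=(\alpha-1)\log F(u)-\log f(u)$ for $u\ge u_0$. Using $F'=-1/f$,
\[
g'(u)=(\alpha-1)\frac{F'(u)}{F(u)}-\frac{f'(u)}{f(u)}=\frac{1-\alpha-f'(u)F(u)}{f(u)F(u)}\le-\frac{\alpha}{2}\cdot\frac{1}{f(u)F(u)}<0\qquad(u\ge u_0),
\]
the last inequality using $f'(u)F(u)\ge1-\alpha/2$. Integrating from $u_0$ to $u$ and using $\int_{u_0}^{u}dt/(f(t)F(t))=\log\bigl(F(u_0)/F(u)\bigr)\to+\infty$ (again because $F(u)\to0$), we get $g(u)\to-\infty$, i.e.\ $F(u)^{\alpha}/(f(u)F(u))\to0$, which completes the plan.

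I expect this last limit to be the only genuine obstacle. The tempting shortcut—bounding $f(u)F(u)$ from below by a positive constant and reducing to $F(u)^{\alpha}\to0$—fails in general, because for rapidly growing $f$ such as the iterated exponentials of Example~\ref{EX1} one has $f(u)F(u)\to0$, so the two small quantities $F(u)^{\alpha}$ and $f(u)F(u)$ must be compared against each other directly. The logarithmic-derivative computation does exactly this: it exploits that $f'(u)F(u)\to1$, which is strictly below $1+\alpha$, so $F(u)^{\alpha}$ decays faster than $f(u)F(u)$ relative to the diverging quantity $\int^{u}dt/(f(t)F(t))$. Everything else is elementary, and the convexity part of (\ref{F2}) is not needed—only the bound $f'(u)F(u)\le1$ for large $u$ is used.
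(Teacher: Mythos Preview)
Your proof is correct and takes a genuinely different route from the paper's. The paper proves the stronger fact $\lim_{u\to\infty}F(u-C_1F(u)^{\alpha})/F(u)=1$ by applying L'Hospital's rule and then invoking the convexity of $f$ from (\ref{F2}) to bound $f(u-C_1F(u)^{\alpha})\ge f(u)-C_1f'(u)F(u)^{\alpha}$ from below; this forces a case split according to whether $\alpha\ge 1$ or $0<\alpha<1$, since the behavior of $F(u)^{\alpha-1}/f(u)$ is handled differently in each range. Your argument instead exploits the exact identity $\log\bigl(F(v)/F(u)\bigr)=\int_v^u dt/(f(t)F(t))$ together with the monotonicity of $fF$ (which follows from $f'F\le 1$), reducing the problem to showing $F(u)^{\alpha}/(f(u)F(u))\to 0$; the logarithmic-derivative computation then treats all $\alpha>0$ uniformly. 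The payoff of your approach is twofold: you avoid the case distinction entirely, and---as you correctly observe---you never use the convexity of $f$, only the inequality $f'(u)F(u)\le 1$ for large $u$. The paper's approach, on the other hand, delivers the precise limit $1$ as a byproduct, though this extra information is not needed for the lemma.
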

\begin{proof}
It is enough to show that
\begin{equation}\label{S32L1E1}
\lim_{u\to\infty}\frac{F(u-C_1F(u)^{\alpha})}{F(u)}=1.
\end{equation}
By L'Hospital's rule we have
\begin{equation}\label{S32L1E3}
\lim_{u\to\infty}\frac{F(u-C_1F(u)^{\alpha})}{F(u)}
=\lim_{u\to\infty}\frac{f(u)+C_1\alpha F(u)^{\alpha-1}}{f(u-C_1F(u)^{\alpha})}.
\end{equation}
Since $f$ is convex for large $u>0$, we have
\begin{equation}\label{S32L1E3+}
f(u-C_1F(u)^{\alpha})\ge f(u)-C_1f'(u)F(u)^{\alpha}\ \ \textrm{for large}\ u>0.
\end{equation}

First, we consider the case $\alpha\ge 1$.
We easily see that
\begin{equation}\label{S32L1E3++}
 f(u)\left(1-C_1f'(u)F(u)\frac{F(u)^{\alpha-1}}{f(u)}\right)
>0\ \ \textrm{for large $u>0$}.
\end{equation}
By (\ref{S32L1E3+}) and (\ref{S32L1E3++}) we  have
\[
1\le\lim_{u\to\infty}\frac{f(u)+C_1\alpha F(u)^{\alpha-1}}{f(u-C_1F(u)^{\alpha})}
\le\lim_{u\to\infty}\frac{f(u)+C_1\alpha F(u)^{\alpha-1}}{f(u)-C_1f'(u)F(u)^{\alpha}}
=\lim_{u\to\infty}\frac{1+C_1\alpha\frac{F(u)^{\alpha-1}}{f(u)}}{1-C_1f'(u)F(u)\frac{F(u)^{\alpha-1}}{f(u)}}
=1.
\]
Thus, the limit in (\ref{S32L1E3}) is $1$.
We obtain (\ref{S32L1E1}).

Second, we consider the case $0<\alpha<1$. 
We have
\[
(f(u)F(u)^{1-\alpha})'=\frac{f'(u)F(u)-1+\alpha}{F(u)^{\alpha}}\to\infty
\ \ \textrm{as}\ \ u\to\infty.
\]
Therefore, $f(u)F(u)^{1-\alpha}\to\infty$ as $u\to\infty$.
We easily see that
\begin{equation}\label{S32L1E4-}
f(u)\left(1-C_1\frac{f'(u)F(u)}{f(u)F(u)^{1-\alpha}}\right)
>0\ \ \textrm{for large $u>0$.}
\end{equation}
By (\ref{S32L1E3+}) and (\ref{S32L1E4-}) we have
\[
1\le\lim_{u\to\infty}\frac{f(u)+C_1\alpha F(u)^{\alpha-1}}{f(u-C_1F(u)^{\alpha})}
\le\lim_{u\to\infty}\frac{f(u)+C_1\alpha F(u)^{\alpha-1}}{f(u)-C_1f'(u)F(u)^{\alpha}}
=\lim_{u\to\infty}\frac{1+\frac{C_1\alpha}{f(u)F(u)^{1-\alpha}}}{1-C_1\frac{f'(u)F(u)}{f(u)F(u)^{1-\alpha}}}=1.
\]
We see that the limit in (\ref{S32L1E3}) is $1$. 
We obtain (\ref{S32L1E1}).
\end{proof}
Because of (\ref{F2}), there is $u_2>0$ such that $f(u)$ is convex on $[u_2,\infty)$ and that $f'(u)F(u)\le 1$ for $u>u_2$.

In this subsection we define $u_0$ by
\[
u_0:=\max\{u_1,u_2\},
\]
where $u_1$ is given Lemma~\ref{S32L1}.
\begin{corollary}\label{S32L2}
Let $N\ge 1$ and $0<\theta\le 2$.
Assume that $f$ satisfies (\ref{F1}) with $q=1$ and (\ref{F2}).
Let $\psi\in L^{\gamma}_{\rm{ul},\rho}(\RN)$, $1\le \gamma\le\infty$, be a function such that $\psi(x)\ge u_0$.
Then the following holds:
\[
S(t)[\psi](x)\le F^{-1}\left( F_1\left( S(t)\left[F^{-1}_1\left(F(\psi)\right)\right]\right)\right)(x)
\ \ \textrm{for}\ \ x\in\RN\ \textrm{and}\ t>0,
\]
where $F_1$ is defined by (\ref{Fqq}) with $q=1$.
\end{corollary}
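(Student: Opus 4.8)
The asserted inequality is precisely the statement of Lemma~\ref{S3L1} with the exponent $q_0$ replaced by $1$, so the plan is to rerun that argument with $\alpha=1$. Set $\Phi_1(u):=F_1^{-1}(F(u))=-\log F(u)$; this is the function $\Phi_\alpha$ of (\ref{S2L1E0}) with $\alpha=1$, where $F_1$ is given by (\ref{Fqq}) with $q=1$. First I would check that the hypotheses of Lemma~\ref{S2L1} hold with $\alpha=1$: indeed $f$ satisfies (\ref{F1}) with $q=1\in[1,\infty)$, and by (\ref{F2}) together with the choice $u_0=\max\{u_1,u_2\}\ge u_2$ we have $f'(u)F(u)\le 1$ for $u\ge u_0$. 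Lemma~\ref{S2L1} then gives $\Phi_1\in C^2(0,\infty)$, $\Phi_1'(u)>0$ for $u>0$ by (\ref{S2L1E1}), and $\Phi_1$ convex on $[u_0,\infty)$.

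Next I would observe that $S(t)[\psi](x)\ge u_0$ for every $x\in\RN$ and $t>0$: this follows at once from $\psi\ge u_0$, $G(x-\cdot,t)\ge 0$ and $\int_{\RN}G(x-y,t)\,dy=1$. Hence both $\psi$ and $S(t)\psi$ take values in $[u_0,\infty)$, the region on which $\Phi_1$ is convex. Extending $\Phi_1$ to a convex function $\widetilde\Phi_1\in C[0,\infty)$ by its tangent line at $u_0$ — which does not change its values on $[u_0,\infty)$ — and using $\psi\in L^\gamma_{\rm ul,\rho}(\RN)\subset L^1_{\rm ul,\rho}(\RN)$, Proposition~\ref{P1}(i) (Jensen's inequality in uniformly local spaces) yields $\Phi_1(S(t)\psi)=\widetilde\Phi_1(S(t)\psi)\le S(t)\bigl[\widetilde\Phi_1(\psi)\bigr]=S(t)\bigl[\Phi_1(\psi)\bigr]$.

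Finally I would apply the increasing map $\Phi_1^{-1}=F^{-1}\circ F_1$ to both sides; since $\Phi_1(\psi)=F_1^{-1}(F(\psi))$ and $\Phi_1^{-1}(w)=F^{-1}(F_1(w))$, this gives $S(t)\psi\le F^{-1}\bigl(F_1\bigl(S(t)[F_1^{-1}(F(\psi))]\bigr)\bigr)$, which is the claim. The only mildly delicate point is invoking Jensen's inequality when $\Phi_1$ is convex merely on the half-line $[u_0,\infty)$ rather than on all of $[0,\infty)$; this is resolved by the convex-extension trick above together with the pointwise bound $S(t)\psi\ge u_0$, so there is no genuine obstacle. (Note that Lemma~\ref{S32L1} plays no role here; it enters only later, when this corollary is used to construct a supersolution in the exponential-growth case.)
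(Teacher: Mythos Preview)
Your proposal is correct and follows precisely the route the paper indicates: it reruns the proof of Lemma~\ref{S3L1} with $q_0$ replaced by~$1$, invoking Lemma~\ref{S2L1} and Proposition~\ref{P1} exactly as before. Your convex-extension trick to justify applying Proposition~\ref{P1} when $\Phi_1$ is convex only on $[u_0,\infty)$ is a welcome bit of extra care that the paper's proof of Lemma~\ref{S3L1} itself glosses over.
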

\begin{proof}
The proof is the same as that of Lemma~\ref{S3L1} with $q_0=1$.
We omit the details.
\end{proof}

Let us introduce the following function:
\begin{align}
\bu(t)
&=\left(F^{-1}\circ F_1\circ \left(S(t)+\sigma\right)\circ F_1^{-1}\circ F\right)(\phi_0)\label{SS3}\\
&=F^{-1}\left(e^{-\sigma}\exp \left( S(t)\left[\log F(\phi_0)\right]\right)\right),\nonumber
\end{align}
where $\phi_0(x):=\max\{\phi(x),u_0\}$, $\sigma>0$ and we define $(S(t)+\sigma)[u]:=S(t)[u]+\sigma$.
We easily see that
\begin{equation}\label{S32E1}
\bu\ge u_0.
\end{equation}
Since $\left\|\phi_0\right\|_{L^r_{\rm{ul},\rho}(\RN)}\le \left\|\phi\right\|_{L^r_{\rm{ul},\rho}(\RN)}+\left\|u_0 \right\|_{L^r_{\rm{ul},\rho}(\RN)}<\infty$, we see that $\phi_0\in L^r_{\rm{ul},\rho}(\RN)$.
By Corollary~\ref{S32L2} and (\ref{SS3}) we have
\begin{equation}\label{SS4}
S(t)\phi_0\le F^{-1}\left(e^{\sigma}F(\bu)\right).
\end{equation}
\begin{lemma}\label{S32L3}
Let $N\ge 1$ and $0<\theta\le 2$.
Assume that $r>N/\theta$, that $\phi\ge 0$ and that $f$ satisfies (\ref{F1}) with $q=1$ and (\ref{F2}) hold.
If $F(\phi)^{-1}\in L^r_{\rm{ul},\rho}(\RN)$, then there is $T>0$ such that $\bu(t)$ defined by (\ref{SS3}) is a supersolution of (\ref{S1E1}) for $0<t<T$.
\end{lemma}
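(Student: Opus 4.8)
The plan is to mimic the proof of Lemma~\ref{S3L3}, replacing the algebraic bounds by their exponential analogues. We must show $\calF[\bu]\le\bu$ for $0<t<T$ with $T$ small, where $\bu$ is given by (\ref{SS3}). The two ingredients are a pointwise bound on $S(t)\phi$ in terms of $\bu$, and an $L^\infty$-bound on the Duhamel term. First I would handle the linear part: since $\phi\le\phi_0$, (\ref{SS4}) and Lemma~\ref{S32L1} (applied with a suitable choice of $\sigma$, $\alpha$, $C_1$, after noting $F^{-1}(e^\sigma F(\bu))=\bu-C_1 F(\bu)^\alpha$-type estimate, or more directly by monotonicity of $F^{-1}$ and the inequality $e^\sigma F(\bu)\le F(\bu/2)$ for $\bu\ge u_0$ after possibly enlarging $u_0$) give a bound of the form $S(t)\phi\le S(t)\phi_0\le \tfrac12\bu$, or more precisely $S(t)\phi_0\le F^{-1}(e^{2\sigma}F(\bu))\le$ something strictly less than $\bu$ with a definite gap. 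The key point is that Lemma~\ref{S32L1} furnishes exactly the needed ``quasi-scaling'' comparison for the exponential case, playing the role that Lemma~\ref{S3L2}~(ii) played in the power case.

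Next I would estimate the nonlinear term. Since $\bu\ge u_0$ and $f'(u)F(u)\le 1$ for $u\ge u_0$, Lemma~\ref{S2L2} with $q_0=1$ gives $f(\bu)\le f(u_0)F(u_0)/F(\bu)$. From (\ref{SS3}), $F(\bu)^{-1}=e^{\sigma}\exp(S(t)[\log F(\phi_0)])$, so
\begin{align*}
\int_0^t S(t-s)f(\bu(s))\,ds
&\le C\int_0^t S(t-s)\Big[e^{\sigma}\exp\big(S(s)[\log F(\phi_0)]\big)\Big]ds.
\end{align*}
Here I expect to use Jensen's inequality (Proposition~\ref{P1}) in reverse on the convex function $v\mapsto e^v$ together with the semigroup property $S(t-s)S(s)=S(t)$: since $\exp(S(s)\psi)\le S(s)[\exp(\psi)]$ pointwise, and then $S(t-s)$ of that is dominated in a way that lets the $s$-integral be pulled through. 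Concretely, writing $\psi:=\log F(\phi_0)$ (which is $\le 0$ since $F(\phi_0)\le F(u_0)\le 1$ after normalizing), one gets $S(t-s)[\exp(S(s)\psi)]\le$ a product of $S(t)[\exp(\psi/\gamma')]$-type factors via Hölder, exactly as the chain of inequalities in Lemma~\ref{S3L3} splits $F[\bu]^{-q_0}$ into $(S(s)[\cdots])^{p_0}$. The output should be a bound of the form $C\bu\,\|F(\phi_0)^{-r}\|_{L^1_{\rm ul,\rho}}^{\kappa}\,t^{1-N/(\theta r)}$ for some $\kappa>0$, using (\ref{S2L3E1}) to convert the $L^\infty$-norms of $S(s)[F(\phi_0)^{-1/r}]$ into powers of $s^{-N/(\theta r)}$ and then integrating; the condition $r>N/\theta$ makes the time exponent positive and the $s$-integral convergent.

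Finally, combining the two estimates: choosing $T>0$ small enough that the nonlinear contribution is bounded by $(1-\tfrac{1}{\sqrt{1+\sigma}})\bu$-style remainder (and simultaneously small enough for the $L^\infty$-estimate (\ref{S2L3E1}) to be valid on $(0,T)$), we conclude $\calF[\bu]=S(t)\phi+\int_0^t S(t-s)f(\bu(s))\,ds\le \bu$, so $\bu$ is a supersolution. The main obstacle I anticipate is the correct bookkeeping in the exponential Jensen step: unlike the power case where $F[\bu]^{-q_0}$ factors cleanly as $(S(s)[F(\phi_0)^{-1/(p_0-1)}])^{p_0}$, here $\exp(S(s)[\log F(\phi_0)])$ must be controlled by $S(s)$ applied to a power of $F(\phi_0)^{-1}$, which requires introducing an auxiliary parameter (a Hölder exponent slightly bigger than $1$, analogous to the $\e$ in (\ref{S3E-2})) so that $\exp(S(s)\psi)\le (S(s)[F(\phi_0)^{-1/\gamma}])^{\gamma}$ for the right $\gamma$, and then absorbing the resulting constants into the gap $\sigma$. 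Getting this exponent to be compatible with $r>N/\theta$ (so that the final $t$-power is positive) is the crux; everything else is a routine adaptation of Lemmas~\ref{S3L2}--\ref{S3L3}.
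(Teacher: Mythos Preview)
Your proposal contains the right ingredients but misdiagnoses the structure that makes the $q=1$ case close, and this leads to a genuine gap. In the algebraic case (Lemma~\ref{S3L3}) the argument is multiplicative: Lemma~\ref{S3L2}~(ii) gives $S(t)\phi\le \bu/\sqrt{1+\sigma}$, and Lemma~\ref{S3L2}~(i) converts the Duhamel remainder into $C\bu\cdot t^{\,+}$, so for small $t$ one gets $\calF[\bu]\le \bu$. Neither half of this survives when $q=1$. First, the bound $S(t)\phi_0\le \tfrac12\bu$ (or any $c\bu$ with $c<1$) is \emph{false}: already for $f(u)=e^u$ one has $F^{-1}(e^{\sigma}F(\bu))=\bu-\sigma$, and for faster growth the gap $\bu-F^{-1}(e^{\sigma}F(\bu))$ even tends to $0$ as $\bu\to\infty$. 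Your suggested inequality $e^{\sigma}F(\bu)\le F(\bu/2)$, fed through the \emph{decreasing} map $F^{-1}$, goes the wrong way. Second, there is no exponential analogue of Lemma~\ref{S3L2}~(i): one cannot bound any negative power of $F(\bu)$ by $C\bu$, so the Duhamel term cannot acquire the $\bu$ factor you expect. Consequently your closing inequality ``nonlinear contribution $\le (1-\tfrac{1}{\sqrt{1+\sigma}})\bu$'' is not available.

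What the paper does instead is purely additive. The Jensen step you identify is correct and needs no auxiliary H\"older exponent: for $r\ge 1$ one simply uses $\exp(S(s)[\log F(\phi_0)^{-1}])\le S(s)[F(\phi_0)^{-1}]$ and the semigroup property to obtain the constant (in $x$) bound
\[
\int_0^tS(t-s)f(\bu(s))\,ds\le Ce^{\sigma}\,\|F(\phi_0)^{-r}\|_{L^1_{\rm ul,\rho}}^{1/r}\,t^{1-\frac{N}{\theta r}},
\]
with a modified splitting when $r<1$ (a case you do not anticipate). On the linear side, Lemma~\ref{S32L1} is used exactly as you first write it, $F^{-1}(e^{\sigma}F(\bu))\le \bu-C_1F(\bu)^{\alpha}$, but the point is that $\alpha$ and $C_1$ are \emph{chosen} a posteriori: one takes $\alpha:=\theta r/N-1$ so that the lower bound $F(\bu)^{\alpha}\ge c\,t^{\alpha N/(\theta r)}=c\,t^{1-N/(\theta r)}$ (coming from $F(\bu)^{-1}\le Ct^{-N/(\theta r)}$) has the \emph{same} $t$-exponent as the Duhamel bound, and then picks $C_1$ so that the two constants match exactly, yielding $\calF[\bu]\le \bu$. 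The crux is therefore not a H\"older/Jensen bookkeeping issue but this matching of the vanishing additive gap $C_1F(\bu)^{\alpha}$ against a Duhamel term that carries no $\bu$ factor at all.
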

\begin{proof}
We show that $\calF[\bu]\le\bu$.
Because of (\ref{S32E1}), by Lemma~\ref{S2L2} we have
\begin{equation}\label{S32L3E1-}
f(\bu)\le\frac{f(u_0)F(u_0)}{F(\bu)}.
\end{equation}

First, we consider the case $r\ge 1$.
Using (\ref{S32L3E1-}), Proposition~\ref{P1} and (\ref{S2L3E1}), we have
\begin{align}
\int_0^tS(t-s)f(\bu(s))ds
&\le C\int_0^tS(t-s)\left[F(\bu)^{-1}\right]ds\nonumber\\
&= C\int_0^t e^{\sigma}S(t-s)\left[\exp\left(S(s)\left[\log F(\phi_0)^{-1}\right]\right)\right]ds\nonumber\\
&\le Ce^{\sigma}\int_0^t S(t-s)\left[S(s)\left[F(\phi_0)^{-1}\right]\right]ds\nonumber\\
&\le Ce^{\sigma}S(t)\left[F(\phi_0)^{-1}\right]\int_0^tds\nonumber\\
&\le CC_0e^{\sigma}\left\|F(\phi_0)^{-1}\right\|_{L^r_{\rm{ul},\rho(\RN)}}t^{1-\frac{N}{\theta r}}.\label{S32L3E1-+}
\end{align}
Using (\ref{SS4}) and (\ref{S32L3E1-+}) and Lemma~\ref{S32L1}, we have
\begin{align}
\calF[\bu]
&\le S(t)\phi_0+\int_0^tS(t-s)f(\bu(s))ds\nonumber\\
&\le F^{-1}\left(e^{\sigma}F(\bu)\right)+CC_0e^{\sigma}\left\|F(\phi_0)^{-r}\right\|^{\frac{1}{r}}_{L^1_{\rm{ul},\rho}(\RN)}t^{1-\frac{N}{\theta r}}\nonumber\\
&\le \bu-C_1F(\bu)^{\alpha}+CC_0e^{\sigma}\left\|F(\phi_0)^{-r}\right\|^{\frac{1}{r}}_{L^1_{\rm{ul},\rho}(\RN)}t^{1-\frac{N}{\theta r}},\label{S32L3E1}
\end{align}
where we define $\alpha:=\theta r/N-1>0$ and $C_1:=CC_0^{\theta r/N}e^{\sigma\theta r/N}\left\|F(\phi_0)^{-r}\right\|^{\theta/N}_{L^1_{\rm{ul},\rho}(\RN)}$.
By Proposition~\ref{P1} and (\ref{S2L3E1}) we have
\[
\frac{1}{F(\bu)}
=e^{\sigma}\exp\left(S(t)\left[\log F(\phi_0)^{-1}\right]\right)
\le e^{\sigma}S(t)\left[F(\phi_0)^{-1}\right]\le C_0e^{\sigma}\left\|F(\phi_0)^{-1}\right\|_{L^r_{\rm{ul},\rho}(\RN)}t^{-\frac{N}{\theta r}}.
\]
Hence, $F(\bu)^{\alpha}\ge C_0^{-\alpha}e^{-\alpha\sigma}\left\|F(\phi_0)^{-r}\right\|^{-\alpha/r}_{L^1_{\rm{ul},\rho}(\RN)}t^{\frac{\alpha N}{\theta r}}$.
By (\ref{S32L3E1}) we have
\begin{align*}
\calF[\bu]&\le\bu -C_1C_0^{-\alpha}e^{-\alpha\sigma}\left\|F(\phi_0)^{-r}\right\|^{-\frac{\alpha}{r}}_{L^1_{\rm{ul},\rho}(\RN)}t^{\frac{\alpha N}{\theta r}}
+CC_0e^{\sigma}\left\|F(\phi_0)^{-r}\right\|^{\frac{1}{r}}_{L^1_{\rm{ul},\rho}(\RN)}t^{1-\frac{N}{\theta r}}\\
&=\bu\ \ \textrm{for}\ \ 0<t<T,
\end{align*}
where we use $\frac{\alpha N}{\theta r}=1-\frac{N}{\theta r}>0$.
Thus, $\bu$ is a supersolution.

Second, we consider the case $r<1$.
By Proposition~\ref{P1} we have
\begin{equation}\label{S32L3E1+}
\frac{1}{F(\bu)}=e^{\sigma}\left(\exp\left(S(t)\left[\log F(\phi_0)^{-r}\right]\right)\right)^{\frac{1}{r}}
\le e^{\sigma}\left(S(t)\left[F(\phi_0)^{-r}\right]\right)^{\frac{1}{r}}.
\end{equation}
By (\ref{S32L3E1+}) and (\ref{S2L3E1}) we have
\begin{align}
\int_0^tS(t-s)f(\bu(s))ds
&\le C\int_0^tS(t-s)\left[F(\bu)^{-1}\right]ds\nonumber\\
&\le Ce^{\sigma}\int_0^tS(t-s)\left[S(s)\left[F(\phi_0)^{-r}\right]
\left\|S(s)\left[F(\phi_0)^{-r}\right]\right\|_{\infty}^{\frac{1}{r}-1}\right] ds\nonumber\\
&=Ce^{\sigma}S(t)\left[F(\phi_0)^{-r}\right]
\int_0^t\left\|S(s)\left[F(\phi_0)^{-r}\right]\right\|_{\infty}^{\frac{1}{r}-1}ds\nonumber\\
&\le C e^{\sigma}C_0t^{-\frac{N}{\theta}}\left\|F(\phi_0)^{-r}\right\|_{L^1_{\rm{ul},\rho}(\RN)}
\int_0^t\left(C_0s^{-\frac{N}{\theta}}\left\|F(\phi_0)^{-r}\right\|_{L^1_{\rm{ul},\rho(\RN)}}\right)^{\frac{1}{r}-1}ds\nonumber\\
&\le C C_0^{\frac{1}{r}}e^{\sigma}\left\|F(\phi_0)^{-r}\right\|_{L^1_{\rm{ul},\rho(\RN)}}^{\frac{1}{r}}
{t^{1-\frac{N}{\theta r}}}.\nonumber
\end{align}
Here, $\int_0^ts^{-\frac{N}{\theta}\left(\frac{1}{r}-1\right)}ds$ is integrable, since $-\frac{N}{\theta}\left(\frac{1}{r}-1\right)>-1$.
We define $\alpha:=\theta r/N-1$ and $C_1=CC_0^{\theta/N}e^{\sigma\theta r/N}\left\|F(\phi_0)^{-r}\right\|^{\theta/N}_{L^1_{\rm{ul},\rho}(\RN)}$.
Using (\ref{S32L3E1+}) and (\ref{S2L3E1}), we have
\[
\frac{1}{F(\bu)}
\le e^{\sigma}\left(S(t)\left[F(\phi_0)^{-r}\right]\right)^{\frac{1}{r}}
\le e^{\sigma}\left(C_0t^{-\frac{N}{\theta}}\left\|F(\phi_0)^{-r}\right\|_{L^1_{\rm{ul},\rho}(\RN)}\right)^{\frac{1}{r}}.
\]
We have
\[
\calF[\bu]\le\bu-C_1C_0^{-\frac{\alpha}{r}}e^{-\alpha\sigma}\left\|F(\phi_0)^{-r}\right\|^{-\frac{\alpha}{r}}_{L^1_{\rm{ul},\rho}(\RN)}t^{\frac{\alpha N}{\theta r}}
+CC_0^{\frac{1}{r}}e^{\sigma}\left\|F(\phi_0)^{-r}\right\|^{\frac{1}{r}}_{L^1_{\rm{ul},\rho}(\RN)}t^{1-\frac{N}{\theta r}}=\bu
\]
for $0<t<T$.
Thus, $\bu$ is a supersolution.
The proof is complete.
\end{proof}

\begin{proof}[Proof of Theorems~\ref{A}~(i-1), (i-2) and \ref{B}~(i)]
Theorem~\ref{A}~(i-1) (resp. (i-2)) follows from Lemmas~\ref{S2L-1} and \ref{S3L3} (resp. Lemmas~\ref{S2L-1} and \ref{S3L4}).
Theorem~\ref{B}~(i) follows from Lemmas~\ref{S2L-1} and \ref{S32L3}.
\end{proof}

\section{Nonexistence in the supercritical case}
We begin with a necessary condition for a local-in-time existence.
\begin{proposition}\label{S4P1}
Let $N\ge 1$ and $0<\theta\le 2$.
Assume that $f$ satisfies (\ref{F1}) with $q\ge 1$ and that $f(u)$ is convex for $u\ge 0$.
Let $\phi\in L^1_{\rm{ul},\rho}(\RN)$ be a nonnegative initial data.
If (\ref{S1E1}) has a nonnegative solution on $\RN\times (0,T)$ in the sense of Definition~\ref{S1D1}, then there is a small $T>0$ such that
\[
\left\|S(t)\phi \right\|_{\infty}\le F^{-1}(t)\ \ \textrm{for}\ \ 0<t<T.
\]
\end{proposition}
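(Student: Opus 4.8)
The plan is to freeze a point $x_0\in\RN$ and a time $t_0\in(0,T)$ and to reduce the integral equation to a scalar differential inequality along the backward orbit of the linear semigroup. Set
\[
z(t):=S(t_0)\phi(x_0)+\int_0^t S(t_0-s)\big[f(u(s))\big](x_0)\,ds,\qquad 0\le t\le t_0 .
\]
Applying the linear, positivity-preserving operator $S(t_0-t)$ to the identity $u(t)=\calF[u](t)$ and using the semigroup property of $S$ together with Tonelli's theorem, one checks that $z(t)=S(t_0-t)\big[u(t)\big](x_0)$; in particular $z(0)=S(t_0)\phi(x_0)$, which is finite by the decay of $G$ (Proposition~\ref{S2S2P1}), and $z(t_0)=\big(\calF[u](t_0)\big)(x_0)=u(t_0)(x_0)$, which is finite for a.e.\ $x_0$ by Definition~\ref{S1D1}. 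Since $f\ge 0$, $z$ is nondecreasing, hence $0\le z(t)\le z(t_0)<\infty$ on $[0,t_0]$ for a.e.\ $x_0$, and $z$ is absolutely continuous with $z'(t)=S(t_0-t)\big[f(u(t))\big](x_0)$ a.e.

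The second step is Jensen's inequality. As $f$ is convex on $[0,\infty)$ and $G(x_0-\,\cdot\,,t_0-t)$ is a probability density, and as $u(t)$ is integrable against it (this is the finiteness of $z(t)$), we obtain
\[
z'(t)=\int_{\RN}G(x_0-y,t_0-t)f\big(u(t)(y)\big)\,dy
\ \ge\ f\!\left(\int_{\RN}G(x_0-y,t_0-t)u(t)(y)\,dy\right)=f\big(z(t)\big)
\]
for a.e.\ $t\in(0,t_0)$ (the supporting-line form of Jensen needs only $u(t)\in L^1$ of that measure, so the $L^\gamma_{\mathrm{ul},\rho}$-hypotheses of Proposition~\ref{P1} are not an obstacle). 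Thus $z$ is a finite function on $[0,t_0]$ satisfying $z'\ge f(z)$ and $z(0)=S(t_0)\phi(x_0)$.

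The third step passes to $F$. Choose $T$ small enough that $t_0$ lies in the range of the finite part of the strictly decreasing function $F$, so that $F^{-1}(t_0)$ is well defined; if $S(t_0)\phi(x_0)$ is so small that $F=\infty$ there, then $S(t_0)\phi(x_0)\le F^{-1}(t_0)$ is automatic. Otherwise $z(t)\ge z(0)>0$ keeps $z$ in the region where $F$ is finite and $C^1$, so $t\mapsto F(z(t))$ is absolutely continuous and, since $F'=-1/f$,
\[
\frac{d}{dt}F\big(z(t)\big)=-\frac{z'(t)}{f\big(z(t)\big)}\le -1\qquad\textrm{a.e.},
\]
whence $F\big(z(0)\big)\ge F\big(z(t_0)\big)+t_0\ge t_0$ and therefore $S(t_0)\phi(x_0)=z(0)\le F^{-1}(t_0)$. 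This holds for a.e.\ $x_0$, so taking the essential supremum gives $\|S(t_0)\phi\|_\infty\le F^{-1}(t_0)$ for all $t_0\in(0,T)$. The one delicate point is the bookkeeping in the first step — verifying the two representations of $z$ and that $z$ stays finite on $[0,t_0]$ — which rests only on the structure of the integral equation and $f\ge 0$; the remainder is the classical ODE comparison with the blow-up time $F\big(S(t_0)\phi(x_0)\big)$.
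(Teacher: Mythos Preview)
Your proof is correct and self-contained. The paper does not actually give a proof here: it cites \cite[Lemma~4.1]{FI18} for $\theta=2$ and asserts that the same argument carries over to $0<\theta<2$ ``if the derivatives are understood in the weak sense.'' Your argument is the classical one (essentially Weissler's), phrased entirely at the level of the integral equation: fixing $(x_0,t_0)$ and tracking $z(t)=S(t_0-t)[u(t)](x_0)$, you reduce via Jensen to the scalar inequality $z'\ge f(z)$ and compare with the blow-up time $F(z(0))$. Because you never differentiate $u$ in space or invoke the PDE itself --- only the semigroup identity $S(t_0-t)S(t-s)=S(t_0-s)$ and the convexity of $f$ --- the fractional case $0<\theta<2$ requires no separate discussion of weak derivatives, which is a mild advantage over the route the paper alludes to.
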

\begin{proof}
When $\theta=2$, the proof can be found in \cite[Lemma~4.1]{FI18}.
When $0<\theta<2$, the proof is also valid if the derivatives are understood in the weak sense.
We omit the proof.
\end{proof}

\begin{lemma}\label{S4L3}
Assume that $f$ satisfies (\ref{F1}) with $q>1$.
For $\beta>1$, there is $s_0>0$ such that
\[
F(s)^{\beta}\le F(\beta s)\ \ \textrm{for}\ \ s\ge s_0.
\]
\end{lemma}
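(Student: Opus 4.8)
The plan is to take logarithms and prove that
\[
\xi(s):=\log F(\beta s)-\beta\log F(s)
\]
is nonnegative for all large $s$, which, since $F>0$ and finite near infinity and $\log$ is increasing, is equivalent to $F(s)^{\beta}\le F(\beta s)$. Differentiating and using $F'=-1/f$,
\[
\xi'(s)=\beta\left(\frac{1}{f(s)F(s)}-\frac{1}{f(\beta s)F(\beta s)}\right).
\]
So the first step is to study $g(u):=f(u)F(u)$, which is $C^{1}$ for large $u$ with $g'(u)=f'(u)F(u)+f(u)F'(u)=f'(u)F(u)-1\to q-1>0$ as $u\to\infty$ by (\ref{F1}).

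The second step extracts from $g'(u)\to q-1$ the two facts needed: fixing $\e\in(0,q-1)$, there is $u_0>0$ with $0<q-1-\e\le g'(u)\le q-1+\e$ for $u\ge u_0$; integrating from $u_0$ gives, for some $c_2>0$ and some $s_1\ge u_0$, the linear upper bound $g(s)\le c_2 s$ for $s\ge s_1$, while $g(\beta s)-g(s)=\int_s^{\beta s}g'(u)\,du\ge(q-1-\e)(\beta-1)s$ for $s\ge u_0$ (note $\beta s>s\ge u_0$). Combining, for $s\ge s_1$,
\[
\xi'(s)=\beta\,\frac{g(\beta s)-g(s)}{g(s)\,g(\beta s)}\ge\beta\,\frac{(q-1-\e)(\beta-1)s}{c_2 s\cdot c_2\beta s}=\frac{K}{s},\qquad K:=\frac{(q-1-\e)(\beta-1)}{c_2^{2}}>0.
\]

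The third step is simply to integrate this lower bound: for $s\ge s_1$, $\xi(s)\ge\xi(s_1)+K\log(s/s_1)\to\infty$ as $s\to\infty$, so there is $s_0\ge s_1$ with $\xi(s)\ge0$ for $s\ge s_0$, which is the assertion. The only delicate point — and the only place where $q>1$, rather than $q\ge1$, is used — is the second step: one must genuinely exploit $g'\to q-1>0$, both to bound $g(s)g(\beta s)$ by a constant times $s^{2}$ and to bound $g(\beta s)-g(s)$ below by a constant times $s$, so that $\xi'(s)\gtrsim 1/s$ is nonintegrable at infinity. For $q=1$ this mechanism breaks down (e.g. $f=e^{u}$ gives only equality), consistent with the hypothesis. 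Everything else is routine calculus.
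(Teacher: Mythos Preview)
Your proof is correct. Both you and the paper reduce to showing that $\log F(\beta s)-\beta\log F(s)\ge 0$ for large $s$, and both crucially exploit $g(u):=f(u)F(u)$ with $g'(u)\to q-1>0$. The execution differs, however. The paper fixes $s$ and differentiates in the multiplier, setting $\xi(\gamma)=\log F(\gamma s)-\gamma\log F(s)$ and observing the ODE identity $\gamma\xi'(\gamma)-\xi(\gamma)=\eta(\gamma s)$ with $\eta(\tau)=-\tau/g(\tau)-\log F(\tau)$; since $\eta(\tau)\to\infty$, one gets $(\xi(\gamma)/\gamma)'\ge 0$ for $s\ge s_0$, and the anchor $\xi(1)=0$ immediately gives $\xi(\beta)\ge 0$. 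Your route instead fixes $\beta$, differentiates in $s$, and uses the linear two-sided bounds on $g$ coming from $g'\in[q-1-\e,q-1+\e]$ to obtain $\xi'(s)\ge K/s$; integrating then forces $\xi(s)\to+\infty$, which is actually a bit more than needed. Your argument is more elementary and quantitative, while the paper's is slightly slicker because the initial condition $\xi(1)=0$ removes the need to integrate a divergent lower bound and to absorb the possibly negative constant $\xi(s_1)$.
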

\begin{proof}
Let $\xi(\gamma):=\log F(\gamma s)-\gamma\log F(s)$.
Then,
\[
\gamma\xi'(\gamma)-\xi(\gamma)=\frac{-\gamma s}{f(\gamma s)F(\gamma s)}-\log F(\gamma s).
\]
Let $\eta(\tau):=-\tau/(f(\tau)F(\tau))-\log F(\tau)$.
Since $\lim_{\tau\to\infty}(f(\tau)F(\tau))'=\lim_{\tau\to\infty}(f'(\tau)F(\tau)-1)=q-1>0$, we see that $\lim_{\tau\to\infty}f(\tau)F(\tau)=\infty$.
By L'Hospital's rule we have
\[
\lim_{\tau\to\infty}\left(\frac{-\tau}{f(\tau)F(\tau)}-\log F(\tau)\right)
=\lim_{\tau\to\infty}\left(\frac{-1}{f'(\tau)F(\tau)-1}-\log F(\tau)\right)=\infty,
\]
and hence there is $s_0>0$ such that $\eta(\tau)\ge 0$ for $\tau\ge s_0$.
If $\gamma s\ge s_0$, then  $\gamma\xi'(\gamma)-\xi(\gamma)\ge 0$.
Therefore, when $1\le\gamma\le\beta$ and $s\ge s_0$, we see that $\gamma s\ge s_0$, and hence $\gamma\xi'(\gamma)-\xi(\gamma)\ge 0$.
Since $(\xi(\gamma)/\gamma)'\ge 0$ and $\xi(1)=0$, we have that $\xi(\gamma)\ge 0$ for $1\le\gamma\le\beta$ and $s\ge s_0$.
The conclusion holds, since $\xi(\beta)\ge 0$.
\end{proof}

\begin{lemma}\label{S4L3+}
Assume that $f$ satisfies (\ref{F1}) with $q=1$ and that $f(u)$ is convex for large $u>0$.
For $\beta>1$, $\gamma>0$ and $C_1>0$, there is $s_1>0$ such that
\[
F(s)^{\beta}\le F(s+C_1F(s)^{\gamma})\ \ \textrm{for}\ \ s>s_1.
\]
\end{lemma}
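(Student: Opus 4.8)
The plan is to reduce the claimed inequality to the single asymptotic fact that $C_1F(s)^{\gamma-1}/f(s)\to 0$ as $s\to\infty$, exactly in the spirit of Lemma~\ref{S32L1}. First I would record two consequences of (\ref{F1}) with $q=1$: since $F(u)<\infty$ for large $u$, the quantity $F(s)=\int_s^\infty dt/f(t)$ is the tail of a convergent integral and hence $F(s)\to 0$ as $s\to\infty$; and since $f$ is positive, nondecreasing and satisfies $\int^\infty dt/f(t)<\infty$, necessarily $f(s)\to\infty$.

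Next, using only that $f$ is nondecreasing, for $s>0$ I would estimate
\[
F(s)-F\bigl(s+C_1F(s)^{\gamma}\bigr)=\int_s^{s+C_1F(s)^{\gamma}}\frac{dt}{f(t)}\le\frac{C_1F(s)^{\gamma}}{f(s)},
\]
so that
\[
F\bigl(s+C_1F(s)^{\gamma}\bigr)\ge F(s)\left(1-\frac{C_1F(s)^{\gamma-1}}{f(s)}\right).
\]

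Then I would prove $C_1F(s)^{\gamma-1}/f(s)\to 0$. When $\gamma\ge 1$ this is immediate, because $F(s)^{\gamma-1}$ stays bounded while $f(s)\to\infty$. When $0<\gamma<1$ I would argue as in the proof of Lemma~\ref{S32L1}: differentiating gives $(f(s)F(s)^{1-\gamma})'=(f'(s)F(s)-1+\gamma)/F(s)^{\gamma}$, and since $f'(s)F(s)\to q=1$ and $F(s)^{\gamma}\to 0^{+}$, this derivative tends to $+\infty$; hence $f(s)F(s)^{1-\gamma}\to\infty$, i.e.\ $F(s)^{\gamma-1}/f(s)=1/(f(s)F(s)^{1-\gamma})\to 0$.

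Finally I would pick $s_1$ so large that for every $s>s_1$ one has both $C_1F(s)^{\gamma-1}/f(s)\le\tfrac12$ and $F(s)^{\beta-1}\le\tfrac12$; the latter is possible since $F(s)\to 0$ and $\beta-1>0$. For such $s$ the displayed lower bound yields $F(s+C_1F(s)^{\gamma})\ge\tfrac12 F(s)\ge F(s)^{\beta-1}F(s)=F(s)^{\beta}$, which is the assertion. The only genuine obstacle — and it is mild — is controlling $F(s)^{\gamma-1}/f(s)$ in the range $0<\gamma<1$, where $F(s)^{\gamma-1}\to\infty$; this is precisely where the information $f'(s)F(s)\to 1$ enters, just as in Lemma~\ref{S32L1}. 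Convexity of $f$ is not actually needed for this particular argument, though it is consistent with the hypotheses used throughout this section.
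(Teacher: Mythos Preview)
Your argument is correct and follows essentially the same route as the paper: both obtain the key bound $F(s+C_1F(s)^{\gamma})\ge F(s)\bigl(1-C_1F(s)^{\gamma-1}/f(s)\bigr)$ (the paper phrases it via convexity of $F$, you via monotonicity of $f$, which are equivalent since $F''=f'/f^2\ge 0$), and both dispatch $F(s)^{\gamma-1}/f(s)\to 0$ by the same case split on $\gamma\ge 1$ versus $0<\gamma<1$. Your concluding step is slightly more direct than the paper's (you only need the ratio bounded below by $\tfrac12$ rather than the exact limit $1$), and your remark that convexity of $f$ is not actually used is accurate---the paper's proof invokes only convexity of $F$, which already follows from $f'\ge 0$ in (\ref{F1}).
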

\begin{proof}
It is enough to show that
\begin{equation}\label{S4L3+E1}
\lim_{s\to\infty}\frac{F(s)}{F(s+C_1F(s)^{\gamma})}=1,
\end{equation}
because
\[
\lim_{s\to\infty}\frac{F(s)^{\beta}}{F(s+C_1F(s)^{\gamma})}
=\lim_{s\to\infty}F(s)^{\beta-1}\frac{F(s)}{F(s+C_1F(s)^{\gamma})}=0.
\]
Since $F$ is convex, we see that
\begin{equation}\label{S4L3+E2}
F(s+C_1F(s)^{\gamma})\ge F(s)-C\frac{F(s)^{\gamma}}{f(s)}
\ \ \textrm{for large $s>0$}.
\end{equation}

First, we consider the case $\gamma\ge 1$.
Then $F(s)(1-C_1F(s)^{\gamma-1}/f(s))>0$ for large $s>0$.
Therefore, by (\ref{S4L3+E2}) we have
\[
1\le\lim_{s\to\infty}\frac{F(s)}{F(s+C_1F(s)^{\gamma})}
\le\lim_{s\to\infty}\frac{1}{1-C\frac{F(s)^{\gamma-1}}{f(s)}}=1.
\]
Thus, we obtain (\ref{S4L3+E1}).

Second, we consider the case $0<\gamma<1$. Since
\[
(f(s)F(s)^{1-\gamma})'=\frac{f'(s)F(s)-1+\gamma}{F(s)^{\gamma}}\to\infty\ \ \textrm{as}\ \ s\to\infty,
\]
we see that $f(s)F(s)^{1-\gamma}\to\infty$ as $s\to\infty$.
Then, $F(s)(1-C/(f(s)F(s)^{1-\gamma}))>0$ for large $s>0$.
Therefore, by (\ref{S4L3+E2}) we have
\[
1\le\lim_{s\to\infty}\frac{F(s)}{F(s+C_1F(s)^{\gamma})}
\le\lim_{s\to\infty}\frac{1}{1-\frac{C_1}{f(s)F(s)^{1-\gamma}}}=1.
\]
Thus, we obtain (\ref{S4L3+E1}).
The proof is complete.
\end{proof}

\begin{proof}[Proof of Theorems~\ref{A}~(ii) and \ref{B}~(ii)]
Let $r\in(0,N/\theta)$.
Then one can take $\alpha>0$ such that $\theta<\alpha<N/r$. Let 
\begin{equation}\label{S4PBE0}
u_0(x):=
\begin{cases}
F^{-1}(|x|^{\alpha}) & \textrm{if}\ F(0)=\infty,\\
F^{-1}\left(\min\{|x|^{\alpha},F(0)\}\right) & \textrm{if}\ F(0)<\infty.
\end{cases}
\end{equation}
Then, $F(u_0)^{-r}\in\unifcomp{1}$.
Let $\varepsilon>0$ so that $\alpha/\theta-\e\alpha>1$.

Suppose the contrary, i.e., (\ref{S1E1}) has a local-in-time nonnegative solution.
Let $K$ be given in Proposition~\ref{S2S2P1}.
Then, by Propositions~\ref{S4P1} and \ref{S2S2P1} we have
\begin{align}
	F^{-1}(t) &\geq \|S(t)u_0\|_{\infty}\nonumber\\
	&\geq t^{-N/\theta}\int_{\mathbb{R}^N}K(t^{-1/\theta}|y|)F^{-1}(|y|^{\alpha})dy\nonumber\\
	&=\int_{\RN}K(|z|)F^{-1}(t^{\alpha/\theta}|z|^{\alpha})dz\nonumber\\
	&\geq\int_{|z|\leq t^{-\e}}K(|z|)F^{-1}(t^{\alpha/\theta}|z|^{\alpha})dz\nonumber\\
	&\geq F^{-1}(t^{\beta})\int_{|z|\leq t^{-\e}}K(|z|)dz\nonumber\\
	&=F^{-1}(t^{\beta})\left(1-\int_{|z|> t^{-\e}}K(|z|)dz\right),\label{S4PBE0+}
\end{align}
where $\beta={\alpha}/{\theta}-\e\alpha>1$. Among other things, we used the fact that $F^{-1}$ is decreasing. Now, we have
\begin{align*}
	\int_{|z|> t^{-\e}}K(|z|)dz
	&= \omega_{N-1}\int_{t^{-\e}}^{\infty}\tau^{N-1}K(\tau)d\tau \\
	&=\omega_{N-1}\int_{t^{-\e}}^{\infty}\frac{1}{\tau^{\theta+1}}\left(\tau^{N+\theta}K(\tau)\right)d\tau,
\end{align*}
where $\omega_{N-1}$ denotes the area of the unit sphere in $\mathbb{R}^N$.
By Proposition~\ref{S2S2P1} 
we see that $\tau^{N+\theta}K(\tau)\leq C$ for $\tau\ge t^{-\e}$.
Hence, as $\theta>0$, we have
\begin{equation}\label{S4PBE0++}
\int_{|z|> t^{-\e}}K(|z|)dz\leq Ct^{\e\theta}.
\end{equation}
Note that (\ref{S4PBE0++}) also holds for $\theta=2$.
Therefore, if $0<\theta\le 2$, then by (\ref{S4PBE0+}) and (\ref{S4PBE0++}) we have
\begin{equation}\label{S4PBE1}
F^{-1}(t)\geq F^{-1}(t^{\beta})(1-Ct^{\varepsilon\theta}).
\end{equation}

First, we consider the case $q>1$.
Let $t=F(s)$.
By (\ref{S4PBE1}) and Lemma~\ref{S4L3}  we have
\begin{align*}
F^{-1}(F(s))
&\ge F^{-1}(F(s)^{\beta})(1-CF(s)^{\e\theta})\\
&\ge F^{-1}(F(\beta s))(1-CF(s)^{\e\theta}).
\end{align*}
Therefore, we have
\[
0\ge s(\beta-1-CF(s)^{\e\theta}).
\]
The above inequality does not hold for large $s>0$.
We obtain a contradiction, and hence the solution does not exist when $q>1$.
The proof of Theorem~\ref{A}~(ii) is complete.

Second, we consider the case $q=1$.
Let $t=F(s)$ and $\gamma=\e\theta/2$.
By (\ref{S4PBE1}) and Lemma~\ref{S4L3+} we have
\begin{align*}
F^{-1}(F(s))&\ge F^{-1}\left(F(s)^{\beta}\right)(1-CF(s)^{\e\theta})\\
&\ge F^{-1}(F(s+C_1F(s)^{\gamma}))(1-CF(s)^{\e\theta})\\
&=s+C_1F(s)^{\gamma}-CsF(s)^{\e\theta}-CC_1F(s)^{\e\theta+\gamma}.
\end{align*}
Then,
\begin{equation}\label{S4PBE2-}
0\ge F(s)^{\gamma}(C_1-CsF(s)^{\e\theta-\gamma}-CC_1F(s)^{\e\theta}).
\end{equation}
If we assume that
\begin{equation}\label{S4PBE2}
sF(s)^{\e\theta/2}\to 0\ \ \textrm{as}\ \ s\to\infty,
\end{equation}
then we have a contradiction, because the right-hand side of (\ref{S4PBE2-}) is positive for large $s>0$.

It is enough to prove (\ref{S4PBE2}).
Let $\delta:=\e\theta/4$.
Then, $f'(s)F(s)\le 1+\delta$ for large $s$, because of (\ref{F1}).
Integrating $f'(s)/f(s)\le (1+\delta)/(f(s)F(s))$ over $[s_0,s]$ twice, we have
\[
\frac{s-s_0}{f(s_0)F(s_0)^{1+\delta}}\le\frac{1}{\delta}\left(F(s)^{-\delta}-F(s_0)^{-\delta}\right).
\]
Thus,
\[
0\le sF(s)^{\frac{\e\theta}{2}}\le s\left(\frac{\delta(s-s_0)}{f(s_0)F(s_0)^{1+\delta}}+F(s_0)^{-\delta}\right)^{-\frac{\e\theta}{2\delta}}\to 0\ \ \textrm{as}\ \ s\to\infty,
\]
where we use $\e\theta/(2\delta)=2$.
The proof of Theorem~\ref{B}~(ii) is complete.
\end{proof}

\section{Existence in the critical case}
Theorems~\ref{A} and \ref{B} do not cover the critical case $F(\phi)^{-N/\theta}\in L^1_{\rm{ul},\rho}(\RN)$.
In this section we prove a local-in-time existence in the critical case when $f(u)=u^p$ or $e^u$.
We also give a simple generalization of the case $u^p$ or $e^u$.
\subsection{Pure power case}
\begin{theorem}\label{S5T1}
Let $N\ge 1$, $0<\theta\le 2$, $f(u)=u^p$, $p>1$, and $\phi\ge 0$.
Then the following hold:\\
(i) Assume that $r>N/\theta$ and $r\ge 1/(p-1)$.
If $\phi^{(p-1)r}\in L^1_{\rm{ul},\rho}(\RN)$, then (\ref{S1E1}) has a local-in-time solution in the sense of Definition~\ref{S1D1}.\\
(ii) Let $r:=N/\theta$ and $p>1+\theta/N$. If $\phi^{(p-1)r}\in\calL^1_{\rm{ul},\rho}(\RN)$, then (\ref{S1E1}) has a local-in-time solution in the sense of Definition~\ref{S1D1}.\\
(iii) For any $r\in(0,N/\theta)$, there is $\phi\ge 0$ such that $\phi^{(p-1)r}\in L^1_{\rm{ul},\rho}(\RN)$ has no local-in-time nonnegative solution in the sense of Definition~\ref{S1D1}.
\end{theorem}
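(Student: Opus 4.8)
The plan is to obtain parts (i) and (iii) by specializing the general theorems, and to prove (ii) by exhibiting an explicit supersolution. For $f(u)=u^{p}$ one has $F(u)=\frac{1}{p-1}u^{1-p}$, hence $f'(u)F(u)\equiv q:=\frac{p}{p-1}$ (so in particular $f'(u)F(u)\le q$ for every $u>0$), $q-1=\frac{1}{p-1}$, and $F(\phi)^{-r}=(p-1)^{r}\phi^{(p-1)r}$. Thus the condition $\phi^{(p-1)r}\in L^{1}_{\mathrm{ul},\rho}(\mathbb{R}^{N})$ is the same as $F(\phi)^{-r}\in L^{1}_{\mathrm{ul},\rho}(\mathbb{R}^{N})$, and $r\ge 1/(p-1)$ is $r\ge q-1$: so part (i) is contained in Theorem~\ref{A}~(i-2) (with the inequality $f'F\le q$ holding here with equality), and part (iii) is contained in Theorem~\ref{A}~(ii) (note $u\mapsto u^{p}$ is convex). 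Only the critical case (ii), $r=N/\theta$, requires a new argument.

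For (ii) I would use the monotonicity method: by Lemma~\ref{S2L-1} it is enough to exhibit a supersolution of (\ref{S1E1}) on $\mathbb{R}^{N}\times(0,T)$ for some $T>0$. Set $m:=(p-1)N/\theta$, so that the hypothesis $p>1+\theta/N$ is equivalent to $m>1$ and the initial-data assumption reads $\phi^{m}\in\mathcal{L}^{1}_{\mathrm{ul},\rho}(\mathbb{R}^{N})$; with $\phi_{0}:=\max\{\phi,1\}$ one also has $\phi_{0}^{m}=\max\{\phi^{m},1\}\in\mathcal{L}^{1}_{\mathrm{ul},\rho}(\mathbb{R}^{N})$, since $g\mapsto\max\{g,1\}$ is a contraction of $L^{1}_{\mathrm{ul},\rho}(\mathbb{R}^{N})$ preserving $BUC(\mathbb{R}^{N})$. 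The candidate supersolution is the deceptively simple
\[
\bar u(t):=M\bigl(S(t)[\phi_{0}^{\,m}]\bigr)^{1/m},
\]
for a constant $M>1$ to be chosen. To verify $\mathcal{F}[\bar u]\le\bar u$: (a) Jensen's inequality in the form of Proposition~\ref{P1}, applied to the convex function $\Psi(u)=u^{m}$ ($m>1$), together with the semigroup property of $S$, yields both $S(t)\phi\le S(t)\phi_{0}\le\bigl(S(t)[\phi_{0}^{m}]\bigr)^{1/m}=M^{-1}\bar u(t)$ and $S(t-s)\bigl[(S(s)[\phi_{0}^{m}])^{a}\bigr]\le\bigl(S(t)[\phi_{0}^{m}]\bigr)^{a}$ for every $a\in(0,1]$ and $0<s<t$. (b) For the Duhamel term, fix a small $\delta>0$ with $\delta<\min\{(m-1)/m,(p-1)/m\}$ and split $\bar u(s)^{p}=M^{p}\,(S(s)[\phi_{0}^{m}])^{\frac1m+\delta}\cdot(S(s)[\phi_{0}^{m}])^{\frac{p-1}{m}-\delta}$; bounding the second (increasing, positive-exponent) factor by $\|S(s)[\phi_{0}^{m}]\|_{L^{\infty}_{\mathrm{ul},\rho}}^{\frac{p-1}{m}-\delta}$ and moving the first factor through $S(t-s)$ by (a),
\[
\int_{0}^{t}S(t-s)\bigl[\bar u(s)^{p}\bigr]\,ds\le M^{p}\bigl(S(t)[\phi_{0}^{m}]\bigr)^{\frac1m+\delta}\int_{0}^{t}\bigl\|S(s)[\phi_{0}^{m}]\bigr\|_{L^{\infty}_{\mathrm{ul},\rho}}^{\frac{p-1}{m}-\delta}\,ds.
\]
By Proposition~\ref{S2S2P2} (with $\alpha=1$, $\beta=\infty$), for any prescribed $\eta>0$ there is $t_{0}=t_{0}(\eta)>0$ with $\|S(s)[\phi_{0}^{m}]\|_{L^{\infty}_{\mathrm{ul},\rho}}\le\eta s^{-N/\theta}$ on $(0,t_{0})$; since $N(p-1)/(\theta m)=1$ — this is where $r=N/\theta$ enters — the last integral is at most $\frac{\theta}{N\delta}\eta^{\frac{p-1}{m}-\delta}t^{N\delta/\theta}$, and feeding $S(t)[\phi_{0}^{m}]\le\eta t^{-N/\theta}$ into the remaining factor $(S(t)[\phi_{0}^{m}])^{\delta}$ makes the powers of $t$ cancel, leaving $\int_{0}^{t}S(t-s)[\bar u(s)^{p}]\,ds\le C(M,\delta)\,\eta^{(p-1)/m}\,\bar u(t)$ on $(0,t_{0})$. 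Fixing $M>1$, then $\delta$, then $\eta$ so small that $C(M,\delta)\eta^{(p-1)/m}\le 1-M^{-1}$, and combining with (a), we get $\mathcal{F}[\bar u]\le\bar u$ on $(0,t_{0})$; Lemma~\ref{S2L-1} then produces the solution.

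The step I expect to be the real obstacle is the Duhamel estimate (b): the problem is exactly at the borderline, so estimating $\bar u(s)^{p}$ crudely through its $L^{\infty}_{\mathrm{ul},\rho}$-norm produces the non-integrable $\int_{0}^{t}s^{-1}\,ds$, which is nothing but the scale invariance of $\|\phi\|_{L^{(p-1)N/\theta}}$ at the critical exponent. The device that closes it is to peel off from $(S(s)[\phi_{0}^{m}])^{p/m}$ a factor of the shape $(S(s)[\phi_{0}^{m}])^{\frac1m+\delta}$, which reassembles — via Jensen and the semigroup law, with no loss — into $(S(t)[\phi_{0}^{m}])^{\frac1m+\delta}$, leaving a residual power $\frac{p-1}{m}-\delta>0$ whose time integral now only barely converges; this is exactly what forces $m>1$, i.e. $p>1+\theta/N$ (for $m\le1$ there is no room for $\delta>0$ and $u\mapsto u^{m}$ is not convex), and it is why one must assume $\phi^{m}$ lies in the $\mathcal{L}$-closure and not merely in $L^{\infty}$: Proposition~\ref{S2S2P2} is precisely the tool that turns the otherwise borderline constant into an arbitrarily small one. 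When $p\ge m$ one may alternatively take $\delta=0$, peeling off one full power $S(s)[\phi_{0}^{m}]$, which $S(t-s)S(s)=S(t)$ handles exactly; these are the two variants one meets in Section~5.
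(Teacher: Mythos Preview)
Your argument is correct. Parts~(i) and~(iii) reduce to Theorem~\ref{A} exactly as the paper does. For part~(ii) you use essentially the paper's Case-(1) supersolution $\bar u=M\bigl(S(t)[\phi_0^{\,m}]\bigr)^{1/m}$, $m=N(p-1)/\theta$, but your Duhamel estimate is organised differently and in fact unifies what the paper splits into two cases. The paper distinguishes $1+\theta/N<p<N/(N-\theta)$ (where $p>m$, so one can peel off exactly one full power $S(s)\phi^{m}$ and use the exact semigroup identity $S(t-s)S(s)=S(t)$) from $p\ge N/(N-\theta)$ (where $p\le m$, and the paper switches to a different supersolution built from $\phi^{p}$ rather than $\phi^{m}$). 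Your device --- peeling off a power $(S(s)[\phi_0^{m}])^{1/m+\delta}$ with $0<\delta<\min\{(m-1)/m,(p-1)/m\}$ and pushing it through $S(t-s)$ by the concave-Jensen inequality $S(t-s)[\psi^{a}]\le(S(t)[\psi])^{a}$ for $a\in(0,1]$ (which is indeed Proposition~\ref{P1} applied to the convex map $u\mapsto u^{1/a}$) --- makes the case split unnecessary and keeps a single supersolution throughout. Both routes hinge on the same essential ingredient, Proposition~\ref{S2S2P2}, which converts membership in $\mathcal{L}^{1}_{\rm ul,\rho}$ into an arbitrarily small constant $C_0$ (your $\eta$) in the borderline $L^\infty$ smoothing estimate; without that, the powers of $t$ cancel but the resulting constant cannot be made to fit under $1-M^{-1}$. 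Your closing remark about ``the two variants one meets in Section~5'' is slightly off in detail --- the paper's second variant changes the supersolution (to one built from $\phi^{p}$), not just the splitting --- but this does not affect the validity of your proof.
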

Note that $r=N/\theta>q-1$, since $p>1+\theta/N$.
\begin{proof}
The assertions (i) and (iii) immediately follow from Theorem~\ref{A} (i-2) and (ii), respectively.

Hereafter, we prove (ii).
Because of Lemma~\ref{S2L-1}, it is enough to prove the existence of a supersolution.
The idea of the calculation (\ref{S5T1E2}) below comes from \cite[Section~4]{RS13}.
However, our supersolutions~(\ref{S5T1E0}) and (\ref{S5T1E2+-}) are simpler than $w(t)$ given in the proof of \cite[Theorem~4.4]{RS13}.
We divide the proof into two cases:
\begin{itemize}
\item[] Case (1): $1+\theta/N<p<N/(N-\theta)$,
\item[] Case (2): $p\ge N/(N-\theta)$.
\end{itemize}
Let $\alpha:=N(p-1)/\theta$.
Note that $p>\alpha$ if $p<N/(N-\theta)$, and $p\le\alpha$ if $p\ge N/(N-\theta)$.

{\it Case (1):} We consider the case where $1+\theta/N<p<N/(N-\theta)$.
Let $\sigma>0$ and
\begin{equation}\label{S5T1E0}
\bu(t):=(1+\sigma)(S(t)\phi^{\alpha})^{\frac{1}{\alpha}}.
\end{equation}
We show that $\bu$ is a supersolution.
By Proposition~\ref{P1} we have $(S(t)\phi)^{\alpha}\le S(t)\phi^{\alpha}$, and hence
\begin{equation}\label{S5T1E1}
S(t)\phi\le (S(t)\phi^{\alpha})^{\frac{1}{\alpha}}=\frac{\bu}{1+\sigma}.
\end{equation}
Since $\phi^{\alpha}\in \calL^1_{\rm{ul},\rho}(\RN)$, it follows from Proposition~\ref{S2S2P2} that there is $T>0$ such that
\begin{equation}\label{S5T1E1+}
\left\|S(t)\phi^{\alpha}\right\|_{\infty}\le C_0 t^{-N/\theta}\ \ \textrm{for}\ \ 0<t<T.
\end{equation}
Note that $p/\alpha>1$ and $\alpha>1$, since $1+\theta/N<p<N/(N-\theta)$.
Using (\ref{S5T1E0}) and (\ref{S5T1E1+}), we have
\begin{align}
\int_0^tS(t-s)\bu(s)^pds
&\le (1+\sigma)^p\int_0^tS(t-s)\left[\left(S(s)\phi^{\alpha}\right)^{\frac{p}{\alpha}}\right]ds\nonumber\\
&\le (1+\sigma)^p\int_0^tS(t-s)\left[S(s)\phi^{\alpha}\left\|S(s)\phi^{\alpha}\right\|_{\infty}^{\frac{p}{\alpha}-1}\right]ds\nonumber\\
&=(1+\sigma)^pS(t)\phi^{\alpha}\int_0^t\left\|S(s)\phi^{\alpha}\right\|_{\infty}^{\frac{p}{\alpha}-1}ds\nonumber\\
&=(1+\sigma)^p\left(S(t)\phi^{\alpha}\right)^{\frac{1}{\alpha}}\left\|S(t)\phi^{\alpha}\right\|_{\infty}^{1-\frac{1}{\alpha}}
\int_0^t\left\|S(s)\phi^{\alpha}\right\|_{\infty}^{\frac{p}{\alpha}-1}ds\nonumber\\
&=\bu (1+\sigma)^{p-1}\left\|S(t)\phi^{\alpha}\right\|_{\infty}^{1-\frac{1}{\alpha}}
\int_0^t\left\|S(s)\phi^{\alpha}\right\|_{\infty}^{\frac{p}{\alpha}-1}ds\nonumber\\
&\le\bu (1+\sigma)^{p-1}\left(C_0t^{-\frac{N}{\theta}}\right)^{1-\frac{1}{\alpha}}
\int_0^t\left(C_0s^{-\frac{N}{\theta}}\right)^{\frac{p}{\alpha}-1}ds\nonumber\\
&=\bu (1+\sigma)^{p-1}C_0^{\frac{p-1}{\alpha}}
t^{-\frac{N(\alpha-1)}{\theta\alpha}}\int_0^ts^{-\frac{N(p-\alpha)}{\theta \alpha}}ds\nonumber\\
&=\bu (1+\sigma)^{p-1}C_0^{\frac{p-1}{\alpha}}\frac{1}{1-\frac{N(p-\alpha)}{\theta\alpha}}.\label{S5T1E2}
\end{align}
Since $p>1+\theta/N$, we see that $-N(p-\alpha)/(\theta\alpha)>-1$, and hence $\int_0^ts^{-N(p-\alpha)/(\theta\alpha)}ds$ is integrable.
Because of Proposition~\ref{S2S2P2}, we can choose $C_0>0$ and $T>0$ such that
\begin{equation}\label{S5T1E2+}
(1+\sigma)^{p-1}C_0^{\frac{p-1}{\alpha}}\frac{1}{1-\frac{N(p-\alpha)}{\theta\alpha}}
\le\frac{\sigma}{1+\sigma}.
\end{equation}
Then, by (\ref{S5T1E1}), (\ref{S5T1E2}) and (\ref{S5T1E2+}) we have
\begin{align*}
\calF[\bu]&:=S(t)\phi+\int_0^tS(t-s)\bu(s)^pds\\
&\le\frac{\bu}{1+\sigma}+\frac{\sigma}{1+\sigma}\bu=\bu
\ \ \textrm{for}\ \ 0<t<T.
\end{align*}
Since $\calF[\bu]\le\bu$ for $0<t<T$, $\bu$ is a supersolution.
It follows from Lemma~\ref{S2L-1} that (\ref{S1E1}) has a local-in-time solution.

{\it Case (2):} We consider the case where $p\ge N/(N-2)$.
Let $\sigma>0$ and
\begin{equation}\label{S5T1E2+-}
\bu(t):=(1+\sigma)(S(t)\phi^p)^{\frac{1}{p}}.
\end{equation}
We show that $\bu$ is a supersolution.
Since $S(t)\phi\le(S(t)\phi^p)^{1/p}$, we have
\[
S(t)\phi\le \left(S(t)\phi^p\right)^{\frac{1}{p}}=\frac{\bu}{1+\sigma}.
\]
Since $\alpha/p\ge 1$ and $\phi^{\alpha}\in\calL^1_{\rm{ul},\rho}(\RN)$, by Proposition~\ref{S2S2P2} we have
\begin{equation}\label{S5T1E3}
\left\|S(t)\phi^p\right\|_{\infty}\le C_0t^{-\frac{Np}{\theta \alpha}}.
\end{equation}
Note that $p>1$.
By (\ref{S5T1E2+-}) and (\ref{S5T1E3}) we have
\begin{align*}
\int_0^tS(t-s)\bu(s)^pds
&\le (1+\sigma)^p\int_0^tS(t-s)\left[S(s)\phi^p\right]ds\\
&=(1+\sigma)^pS(t)\phi^p\int_0^tds\\
&\le(1+\sigma)^p\left(S(t)\phi^p\right)^{\frac{1}{p}}
\left\|S(t)\phi^p\right\|_{\infty}^{\frac{p-1}{p}}t\\
&\le\bu (1+\sigma)^{p-1}\left(C_0t^{-\frac{Np}{\theta \alpha}}\right)^{\frac{p-1}{p}}t\\
&=\bu (1+\sigma)^{p-1}C_0^{\frac{p-1}{p}}.
\end{align*}
Because of Proposition~\ref{S2S2P2}, we can choose $C_0>0$ and $T>0$ such that
\[
(1+\sigma)^{p-1}C_0^{\frac{p-1}{p}}\le\frac{\sigma}{1+\sigma}.
\]
The rest of the proof is the same as the case $1+\theta/N<p<N/(N-\theta)$.
We omit the details.
\end{proof}
\begin{remark}
In the Laplacian case $\theta=2$, the exponent $p=N/(N-2)$ is called \lq\lq doubly critical" in \cite[Remark 5]{BC96}.
This exponent is obtained by the relation $N(p-1)/2=p$.
It is known that the uniqueness of the solution to $\partial_tu=\Delta u+|u|^{2/(N-2)}u$ in $L^{N/(N-2)}(\RN)$ does not hold.
See \cite{NS85,T02}.
\end{remark}

\subsection{Pure exponential case}
\begin{theorem}\label{S5T2}
Let $N\ge 1$, $0<\theta\le 2$, $f(u)=e^u$ and $\phi\ge 0$.
Then the following hold:\\
(i) Assume that $r>N/\theta$. If $e^{r\phi}\in L^1_{\rm{ul},\rho}(\RN)$, then (\ref{S1E1}) has a local-in-time solution in the sense of Definition~\ref{S1D1}.\\
(ii) Let $r:=N/\theta$. If $e^{r\phi}\in\calL^1_{\rm{ul},\rho}(\RN)$, then (\ref{S1E1}) has a local-in-time solution in the sense of Definition~\ref{S1D1}.\\
(iii) For any $r\in (0,N/\theta)$, there is $\phi\ge 0$ such that $e^{r\phi}\in L^1_{\rm{ul},\rho}(\RN)$ and (\ref{S1E1}) has no local-in-time nonnegative solution in the sense of Definition~\ref{S1D1}.
\end{theorem}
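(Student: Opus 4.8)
My plan is as follows. First I would observe that assertions (i) and (iii) are not new. For $f(u)=e^u$ one has $F(u)=e^{-u}$, so $f'(u)F(u)\equiv1$; hence (\ref{F1}) holds with $q=1$, (\ref{F2}) holds since $e^u$ is convex and $f'(u)F(u)\le1$, and $F(\phi)^{-r}=e^{r\phi}$. Therefore (i) is the special case $f(u)=e^u$ of Theorem~\ref{B}~(i), and (iii) is the special case of Theorem~\ref{B}~(ii). The real content is the critical case (ii), for which, by Lemma~\ref{S2L-1}, it is enough to produce a finite supersolution of (\ref{S1E1}) on some $\RN\times(0,T)$.

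In analogy with (\ref{S5T1E0}) and (\ref{S5T1E2+-}), for $\sigma>0$ I would take
\begin{equation}\label{S5T2E0}
\bu(t):=\frac1r\log\left((1+\sigma)S(t)\left[e^{r\phi}\right]\right),\qquad r=\tfrac N\theta,
\end{equation}
which is well-defined and finite a.e.\ because $e^{r\phi}\in\calL^1_{\rm{ul},\rho}(\RN)\subset L^1_{\rm{ul},\rho}(\RN)$. Write $w(t):=S(t)[e^{r\phi}]$. Two facts are immediate: since $e^{r\phi}\ge1$ and $\int_{\RN}G(x,t)\,dx=1$ we have $w(t)\ge1$; and since $e^{r\phi}\in\calL^1_{\rm{ul},\rho}(\RN)$, Proposition~\ref{S2S2P2} (with $\alpha=1$, $\beta=\infty$) supplies, for any prescribed $C_0>0$, a $T>0$ with $\|w(t)\|_{L^\infty_{\rm{ul},\rho}(\RN)}\le C_0t^{-N/\theta}$ for $0<t<T$. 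Moreover $\phi\le(re)^{-1}e^{r\phi}\in L^1_{\rm{ul},\rho}(\RN)$, so Proposition~\ref{P1} applied to the convex function $v\mapsto e^{rv}$ gives $e^{rS(t)\phi}\le S(t)[e^{r\phi}]=w(t)$, i.e.
\[
S(t)\phi\le\frac1r\log w(t)=\bu(t)-\frac1r\log(1+\sigma).
\]

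The crux is then to estimate $\int_0^tS(t-s)[e^{\bu(s)}]\,ds=(1+\sigma)^{1/r}\int_0^tS(t-s)[w(s)^{1/r}]\,ds$, and I expect to split according to the size of $r$, just as the proof of Theorem~\ref{S5T1} splits into two cases. If $r\ge1$, then $v\mapsto-v^{1/r}$ is convex, so Proposition~\ref{P1} together with the semigroup identity $S(t-s)S(s)=S(t)$ gives $S(t-s)[w(s)^{1/r}]\le\left(S(t-s)[w(s)]\right)^{1/r}=w(t)^{1/r}$, whence $\int_0^tS(t-s)[w(s)^{1/r}]\,ds\le t\,w(t)^{1/r}\le t\left(C_0t^{-N/\theta}\right)^{1/r}=C_0^{1/r}$, using $N/(\theta r)=1$. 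If $0<r<1$, then $1/r-1>0$, so $w(s)^{1/r}=w(s)\,w(s)^{1/r-1}\le w(s)\,\|w(s)\|_{L^\infty_{\rm{ul},\rho}(\RN)}^{1/r-1}$, and again by the semigroup identity $\int_0^tS(t-s)[w(s)^{1/r}]\,ds\le w(t)\int_0^t\|w(s)\|_{L^\infty_{\rm{ul},\rho}(\RN)}^{1/r-1}\,ds\le w(t)\,C_0^{1/r-1}\int_0^ts^{-(N/\theta)(1/r-1)}\,ds$, where the integral converges because $N/\theta=r>0$ and equals $r^{-1}t^{r}$; bounding $w(t)\le C_0t^{-r}$ then gives $\int_0^tS(t-s)[w(s)^{1/r}]\,ds\le r^{-1}C_0^{1/r}$. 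In both cases the Duhamel term is at most $c(r)(1+\sigma)^{1/r}C_0^{1/r}$ with $c(r):=\max\{1,1/r\}$, so $\calF[\bu](t)\le\bu(t)-r^{-1}\log(1+\sigma)+c(r)(1+\sigma)^{1/r}C_0^{1/r}$. Since Proposition~\ref{S2S2P2} lets us take $C_0$ as small as we wish (shrinking $T$), we fix $C_0$ with $c(r)(1+\sigma)^{1/r}C_0^{1/r}\le r^{-1}\log(1+\sigma)$; then $\calF[\bu]\le\bu$ on $\RN\times(0,T)$, and Lemma~\ref{S2L-1} yields the desired local-in-time solution. This proves (ii).

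The step I expect to be the main obstacle is precisely this bound on the Duhamel term. Because of the exponential nonlinearity and the exact critical scaling $r=N/\theta$, the integral $\int_0^tS(t-s)e^{\bu(s)}\,ds$ is of borderline size — morally $t\cdot t^{-1}$ — so one cannot replace $w(s)^{1/r}$ by its $L^1_{\rm{ul},\rho}$ or $L^\infty_{\rm{ul},\rho}$ norm: doing so costs a divergent time integral $\int_0^t(t-s)^{-N/\theta}\,ds\sim t^{1-r}$ when $r>1$. Keeping the semigroup structure intact — via Jensen's inequality through the averaging operator $S(t-s)$ when $1/r\le1$, and via splitting off the positive power $\|S(s)[e^{r\phi}]\|_{L^\infty_{\rm{ul},\rho}(\RN)}^{1/r-1}$ when $1/r>1$ — is exactly what makes the estimate close with an absorbable constant. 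Everything else (the reduction of (i) and (iii) to Theorem~\ref{B}, the applications of Proposition~\ref{P1}, the smoothing estimate with arbitrarily small constant, and the monotonicity method) is routine.
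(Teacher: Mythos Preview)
Your argument is correct. The reductions of (i) and (iii) to Theorem~\ref{B} match the paper exactly, and your treatment of (ii) is sound: Jensen for the concave map $v\mapsto v^{1/r}$ (equivalently, Proposition~\ref{P1} applied to $-v^{1/r}$) is legitimate since $w(s)\in L^\infty$ for $s>0$, and the case split $r\ge1$ versus $r<1$ closes the borderline time integral just as you describe.

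The paper, however, takes a slightly different route in (ii). Instead of your supersolution $\bu(t)=r^{-1}\log\bigl((1+\sigma)S(t)[e^{r\phi}]\bigr)$, modeled on (\ref{S5T1E0}), the paper uses the simpler $\bu(t)=S(t)\phi+\sigma$, which is exactly what the general ansatz (\ref{SS3}) collapses to when $f(u)=e^u$. With that choice, $e^{\bu}=e^{\sigma}e^{S(s)\phi}$, and in the case $r\ge1$ the paper applies \emph{convex} Jensen to $e^v$ on the \emph{inner} operator $S(s)$ (giving $e^{S(s)\phi}\le S(s)[e^{\phi}]$), then uses the semigroup identity and the smoothing $\|S(t)e^{\phi}\|_\infty\le C_0t^{-1}$ coming from $e^{\phi}\in\calL^r_{\rm ul,\rho}$; in the case $r<1$ it first applies Jensen to $e^{rv}$ and then splits off $\|S(s)e^{r\phi}\|_\infty^{1/r-1}$, essentially the same manoeuvre as yours. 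So both proofs rest on the same three ingredients (Jensen, the semigroup identity, and Proposition~\ref{S2S2P2}) and the same $r\gtrless1$ split; what differs is the supersolution and, in the $r\ge1$ branch, whether Jensen is applied to the outer convolution with a concave function or to the inner convolution with a convex one. The paper's choice buys a strikingly simple supersolution; yours has the virtue of being the direct exponential analogue of the power-case construction in Theorem~\ref{S5T1}.
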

\begin{proof}
The assertions (i) and (iii) immediately follow from Theorem~\ref{B}~(i) and (ii), respectively.

Hereafter, we prove (ii).
Because of Lemma~\ref{S2L-1}, it is enough to prove the existence of a supersolution.
Let $\sigma>0$ and
\begin{equation}\label{S5T2E0}
\bu(t):=S(t)\phi+\sigma.
\end{equation}
Note that (\ref{SS3}) becomes (\ref{S5T2E0}).
We show that (\ref{S5T2E0}) is a supersolution.
The proof is divided into two cases: $r\ge 1$ and $r<1$.

{\it Case (1):} We consider the case $r\ge 1$.
By (\ref{S5T2E0}) we see that
\begin{equation}\label{S5T2E1}
S(t)\phi=\bu-\sigma.
\end{equation}
Since $r\ge 1$, we can easily see that $e^{\phi}\in\calL^r_{\rm{ul},\rho}(\RN)$ if $e^{r\phi}\in\calL^1_{\rm{ul},\rho}(\RN)$.
Since $e^{\phi}\in \calL^r_{\rm{ul},\rho}(\RN)$, it follows from Proposition~\ref{S2S2P2} that there is $T>0$ such that
\begin{equation}\label{S5T2E1+}
\left\|S(t)e^{\phi}\right\|_{\infty}\le C_0t^{-1}\ \ \textrm{for}\ \ 0<t<T.
\end{equation}
Using Proposition~\ref{P1} and (\ref{S5T2E1+}), we have
\begin{align}
\int_0^tS(t-s)[e^{\bu(s)}]ds
&=e^{\sigma}\int_0^tS(t-s)\left[\exp\left(S(s){\phi}\right)\right]ds\nonumber\\
&\le e^{\sigma}\int_0^tS(t-s)\left[S(s)e^{\phi}\right]ds\nonumber\\
&=e^{\sigma}S(t)e^{\phi}\int_0^tds\nonumber\\
&\le e^{\sigma}\left\|S(t)e^{\phi}\right\|_{\infty}t\nonumber\\
&\le e^{\sigma}C_0t^{-1}t\nonumber\\
&= C_0e^{\sigma}.\label{S5T2E2}
\end{align}
Because of Proposition~\ref{S2S2P2}, we can choose $C_0>0$ and $T>0$ such that
\begin{equation}\label{S5T2E3}
C_0e^{\sigma}\le\sigma.
\end{equation}
Then, by (\ref{S5T2E1}), (\ref{S5T2E2}) and (\ref{S5T2E3}) we have
\begin{align*}
\calF[\bu]
&:=S(t)\phi+\int_0^tS(t-s)e^{\bu(s)}ds\\
&\le \bu-\sigma+\sigma=\bu
\ \ \textrm{for}\ \ 0<t<T.
\end{align*}
Since $\calF[\bu]\le\bu$ for $0<t<T$, $\bu$ is a supersolution.
It follows from Lemma~\ref{S2L1} that (\ref{S1E1}) has a local-in-time solution.

{\it Case (2):} We consider the case $r<1$.
Since $S(t)\phi=\bu-\sigma$, we obtain (\ref{S5T2E1}).
Since $e^{r\phi}\in\calL^1_{\rm{ul},\rho}(\RN)$, it follows from Proposition~\ref{S2S2P2} that there is $T>0$ such that
\begin{equation}\label{S5T2E3+}
\left\|S(t)e^{r\phi}\right\|_{\infty}\le C_0t^{-r}\ \ \textrm{for}\ \ 0<t<T.
\end{equation}
Using Proposition~\ref{P1} and (\ref{S5T2E3+}), we have
\begin{align*}
\int_0^tS(t-s)e^{\bu(s)}ds
&=e^{\sigma}\int_0^tS(t-s)\left[\left(\exp\left(S(s)[r\phi]\right)\right)^{\frac{1}{r}}\right]ds\\
&\le e^{\sigma}\int_0^tS(t-s)\left[\left(S(s)\left[e^{r\phi}\right]\right)^{\frac{1}{r}}\right]ds\\
&\le e^{\sigma}\int_0^tS(t-s)\left[S(s)e^{r\phi}\left\|S(s)e^{r\phi}\right\|_{\infty}^{\frac{1}{r}-1}\right]ds\\
&= e^{\sigma}S(t)e^{r\phi}\int_0^t\left\|S(s)e^{r\phi}\right\|_{\infty}^{\frac{1}{r}-1}ds\\
&\le e^{\sigma}\left\|S(t)e^{r\phi}\right\|_{\infty}\int_0^t\left\|S(s)e^{r\phi}\right\|_{\infty}^{\frac{1}{r}-1}ds\\ 
&\le e^{\sigma}C_0t^{-r}\int_0^t\left(C_0s^{-r}\right)^{\frac{1-r}{r}}ds\\
&\le e^{\sigma}C_0t^{-r}\int_0^tC_0^{\frac{1-r}{r}}s^{-1+r}ds\\
&=e^{\sigma}C_0^{\frac{1}{r}}\frac{1}{r}.
\end{align*}
By Proposition~\ref{S2S2P2} we can choose $C_0>0$ and $T>0$ such that
\[
e^{\sigma}C_0^{\frac{1}{r}}\frac{1}{r}\le\sigma.
\]
The rest of the proof is the same as the case $r\ge 1$.
We omit the details.
\end{proof}

\subsection{Other nonlinearities}
Modifying the proofs of Theorems~\ref{S5T1}~(i) and \ref{S5T2}~(ii), we can easily prove the following:
\begin{corollary}
Let $N\ge 1$, $0<\theta\le 2$ and $\phi\ge 0$.
Assume that $f$ satisfies (\ref{F1}) with $q\ge 1$.
Then the following hold:\\
(i) Assume that there are $p>1+\theta/N$ and $C>0$ such that $f(u)\le Cu^p$ for large $u>0$.
If $\phi^{(p-1)N/\theta}\in\calL^1_{\rm{ul},\rho}(\RN)$, then (\ref{S1E1}) has a local-in-time solution in the sense of Definition~\ref{S1D1}.\\
(ii) Assume that there is $C>0$ such that $f(u)\le Ce^u$ for large $u>0$.
If $e^{N\phi/\theta}\in\calL^1_{\rm{ul},\rho}(\RN)$, then (\ref{S1E1}) has a local-in-time solution in the sense of Definition~\ref{S1D1}.
\end{corollary}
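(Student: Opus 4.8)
The plan is to apply the monotonicity method of Lemma~\ref{S2L-1} once more. By (\ref{F1}) the nonlinearity $f$ is continuous and nondecreasing on $[0,\infty)$ with $f(0)\ge 0$, so it suffices to produce a supersolution of (\ref{S1E1}) on some interval $(0,T)$. The idea is that the critical-case supersolutions (\ref{S5T1E0}), (\ref{S5T1E2+-}) and (\ref{S5T2E0}) are robust: if one replaces the model nonlinearity by an $f$ bounded above by a constant multiple of it on the relevant range, every estimate of Section~5 survives with a single extra multiplicative constant, which is absorbed by shrinking $T$ (equivalently $C_0$ in Proposition~\ref{S2S2P2}).

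For (i), fix $u_0>0$ with $f(u)\le Cu^p$ for $u\ge u_0$ and put $\alpha:=(p-1)N/\theta$; note $\alpha>1$ since $p>1+\theta/N$. From $\phi^{\alpha}\in\calL^1_{\rm{ul},\rho}(\RN)$ we get $\phi\in L^{\alpha}_{\rm{ul},\rho}(\RN)\subset L^1_{\rm{ul},\rho}(\RN)$, so $S(t)\phi$ is well defined; moreover $\calL^1_{\rm{ul},\rho}(\RN)$ is a closed linear subspace containing the constant functions, so $\phi^{\alpha}+u_0^{\alpha}\in\calL^1_{\rm{ul},\rho}(\RN)$, and likewise $\phi^{p}+u_0^{p}\in\calL^{\alpha/p}_{\rm{ul},\rho}(\RN)$ when $p\le\alpha$ (as $\phi^p\in\calL^{\alpha/p}_{\rm{ul},\rho}(\RN)$ follows from $\phi^\alpha\in\calL^1_{\rm{ul},\rho}(\RN)$). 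Following the case division of the proof of Theorem~\ref{S5T1}~(ii), take, with $\sigma>0$,
\[
\bu(t):=(1+\sigma)\bigl(S(t)(\phi^{\alpha}+u_0^{\alpha})\bigr)^{1/\alpha}
\quad\text{or}\quad
\bu(t):=(1+\sigma)\bigl(S(t)(\phi^{p}+u_0^{p})\bigr)^{1/p}
\]
in the case $p>\alpha$, resp.\ $p\le\alpha$. Because $S(t)$ fixes constants we obtain $\bu\ge(1+\sigma)u_0>u_0$ pointwise, hence $f(\bu)\le C\bu^p$; Jensen's inequality (Proposition~\ref{P1}) gives $S(t)\phi\le(S(t)\phi^{\alpha})^{1/\alpha}\le\bu/(1+\sigma)$ (and similarly with $p$). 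Estimating the Duhamel term $\int_0^t S(t-s)f(\bu(s))\,ds\le C\int_0^t S(t-s)\bu(s)^p\,ds$ exactly as in (\ref{S5T1E2}) — via Jensen, the bound $\|S(s)(\phi^{\alpha}+u_0^{\alpha})\|_{\infty}\le C_0 s^{-N/\theta}$ of Proposition~\ref{S2S2P2}, and integrability of $s\mapsto s^{-N(p-\alpha)/(\theta\alpha)}$ (equivalent to $\alpha>1$) — one arrives at a bound $C\cdot C_0^{(p-1)/\alpha}\cdot(\text{const})\cdot\bu$ in which the power of $t$ is exactly $0$. Choosing $C_0$, hence $T$, small enough makes $\calF[\bu]\le\bu/(1+\sigma)+\bigl(\sigma/(1+\sigma)\bigr)\bu=\bu$ on $(0,T)$, and Lemma~\ref{S2L-1} produces a local-in-time solution.

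For (ii) no shift of the exponent is needed: since $e^u\ge 1$ on $[0,\infty)$ and $f$ is bounded on $[0,u_0]$, the hypothesis $f(u)\le Ce^u$ for large $u$ upgrades to $f(u)\le C'e^u$ for all $u\ge 0$. With $r:=N/\theta$ and $e^{r\phi}\in\calL^1_{\rm{ul},\rho}(\RN)$ (whence $\phi\le\tfrac1r e^{r\phi}\in L^1_{\rm{ul},\rho}(\RN)$), take $\bu(t):=S(t)\phi+\sigma$ as in (\ref{S5T2E0}) and repeat the proof of Theorem~\ref{S5T2}~(ii) subcase by subcase ($r\ge 1$ and $r<1$): Jensen gives $\exp(S(s)\phi)\le S(s)e^{\phi}$ (resp.\ $\exp(S(s)[r\phi])\le S(s)e^{r\phi}$), Proposition~\ref{S2S2P2} gives $\|S(t)e^{r\phi}\|_{\infty}\le C_0 t^{-r}$, and the Duhamel term $\int_0^t S(t-s)f(\bu(s))\,ds\le C'\int_0^t S(t-s)e^{\bu(s)}\,ds$ is controlled as before up to the factor $C'$; taking $C_0$ small gives $\calF[\bu]\le S(t)\phi+\sigma=\bu$ on $(0,T)$, and Lemma~\ref{S2L-1} applies.

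The argument is essentially bookkeeping over Section~5, so I do not anticipate a real obstacle; the only point requiring care is (i), where one must keep the supersolution out of the region $\{u<u_0\}$ on which $f(u)\le Cu^p$ may fail. This is precisely why the constant $u_0^{\alpha}$ (resp.\ $u_0^{p}$) is added to the data before applying $S(t)$, and why it matters that $\calL^1_{\rm{ul},\rho}(\RN)$ contains the constants and is stable under addition, so that the shifted data still lies in the space needed by Proposition~\ref{S2S2P2}.
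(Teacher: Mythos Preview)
Your proposal is correct and follows the route the paper sketches; the paper itself omits all details, merely stating that the corollary is obtained by modifying the proofs of Theorems~\ref{S5T1} and \ref{S5T2}. Your additive shift $\phi^{\alpha}\mapsto\phi^{\alpha}+u_0^{\alpha}$ (resp.\ $\phi^{p}\mapsto\phi^{p}+u_0^{p}$) to force $\bu\ge u_0$ is a clean device that the paper does not spell out, playing the same role as the replacement $\phi\mapsto\phi_0=\max\{\phi,u_0\}$ used in Section~3, and your observation in~(ii) that $f(u)\le C'e^{u}$ extends to all $u\ge 0$ makes that case immediate.
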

The details of the proofs are omitted.

\section{Summary and conjectures}
We study integrability conditions for a local-in-time existence and nonexistence of positive solutions of (\ref{S1E1}) when the initial data is positive and in uniformly local $L^p$ spaces.
The exponent $N(p-1)/\theta$ becomes a threshold, and we construct a local-in-time positive solution in the subcritical case (Theorems~\ref{A}~(i-1), (i-2) and \ref{B}~(i)), and show that there is an initial data such that (\ref{S1E1}) has no solution in the supercritical case (Theorems~\ref{A}~(ii) and \ref{B}~(ii)).
For $f(u)=u^p$ (resp. $e^u$), a local-in-time solution can be constructed in the critical case when $\phi^{(p-1)N/\theta}\in\calL^1_{\rm{ul},\rho}(\RN)$ (resp. $e^{r\phi}\in\calL^1_{\rm{ul},\rho}(\RN)$).
The following conjectures are left open:
\begin{conjecture}[{Existence for general $f$, critical case}]
Assume that $f$ satisfies (\ref{F1}) with $q\ge 1$, that $f'(u)F(u)\le q$ for large $u>0$ and $\phi\ge 0$.
Let $r=N/2>q-1$. If $F(\phi)^{-r}\in\calL^1_{\rm{ul},\rho}(\RN)$, then (\ref{S1E1}) has a local-in-time solution.
\end{conjecture}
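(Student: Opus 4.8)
The plan is to push the subcritical construction of Section~3 to the endpoint $r=N/\theta$, replacing the surplus power of $t$ that makes Lemmas~\ref{S3L3}--\ref{S3L4} work by the small-constant smoothing bound of Proposition~\ref{S2S2P2}, exactly as in the passage from Theorem~\ref{A}~(i-2) to Theorem~\ref{S5T1}~(ii). First I would carry out the usual reduction, replacing $\phi$ by $\phi_0:=\max\{\phi,u_0\}$ with $u_0$ so large that $f'(u)F(u)\le q$ and $\Phi_q:=F_q^{-1}\circ F$ is convex on $[u_0,\infty)$ (Lemma~\ref{S2L1}). One has to check that this does not leave the critical class: since $h\mapsto\max\{h,c\}$ is $1$-Lipschitz on $L^1_{\rm{ul},\rho}(\RN)$ and maps $BUC(\RN)$ into itself, $F(\phi)^{-r}\in\calL^1_{\rm{ul},\rho}(\RN)$ gives $F(\phi_0)^{-r}=\max\{F(\phi)^{-r},F(u_0)^{-r}\}\in\calL^1_{\rm{ul},\rho}(\RN)$; and because $(p-1)r=r/(q-1)>1$ when $r=N/\theta>q-1$, the elementary inequality $|a^{1/m}-b^{1/m}|^{m}\le|a-b|$ for $a,b\ge0$ and $m\ge1$ propagates this to $F(\phi_0)^{-1/(p-1)}=(F(\phi_0)^{-r})^{1/((p-1)r)}\in\calL^{(p-1)r}_{\rm{ul},\rho}(\RN)$, and similarly to all the powers of $F(\phi_0)^{-1}$ that will enter the $L^p$--$L^q$ estimates.

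For the supersolution I would not use (\ref{SS})--(\ref{SS3}) --- Remark~\ref{R1}~(iii) already records that these miss the endpoint --- but the pullback through $\Phi_q$ of the pure-power supersolutions (\ref{S5T1E0}) and (\ref{S5T1E2+-}): for $q>1$,
\[
\bu(t):=\Phi_q^{-1}\Bigl[(1+\sigma)\bigl(S(t)\bigl[\Phi_q(\phi_0)^{\kappa}\bigr]\bigr)^{1/\kappa}\Bigr],\qquad 0<\sigma\le1,
\]
with $\kappa=(p-1)r$ if $q\ge N/\theta$ and $\kappa=p$ if $q<N/\theta$, mirroring Cases~(1) and~(2) in the proof of Theorem~\ref{S5T1}~(ii); when $f(u)=u^p$ one has $\Phi_q=\mathrm{id}$, so $\bu$ is literally (\ref{S5T1E0}) (resp.\ (\ref{S5T1E2+-})), and for $q=1$ I would take $\bu(t):=S(t)\phi_0+\sigma$ as in Theorem~\ref{S5T2}~(ii). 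Since $\kappa\ge1$, Jensen's inequality (Proposition~\ref{P1}) gives $\bu\ge u_0$, so Lemma~\ref{S2L2} applies and $f(\bu)\le c_0F(\bu)^{-q}=c_0'\Phi_q(\bu)^{p}$ (using $(q-1)p=q$). Jensen for the convex $\Phi_q$, combined with a variant of Lemma~\ref{S3L2}~(ii) with $q_0=q$, yields $S(t)\phi\le S(t)\phi_0\le F^{-1}\bigl((1+\sigma)^{p-1}F(\bu)\bigr)\le\bu/\sqrt{1+\sigma}$, leaving multiplicative room. Writing $\Phi_q(\bu)^{p}=\bigl((1+\sigma)(S(t)[\Phi_q(\phi_0)^{\kappa}])^{1/\kappa}\bigr)^{p}$ and running the computation of (\ref{S5T1E2}) --- the bound $(S(s)H)^{p/\kappa}\le(S(s)H)\|S(s)H\|_\infty^{p/\kappa-1}$, the semigroup identity $S(t-s)S(s)=S(t)$, and the small-constant $L^p$--$L^q$ bounds of Proposition~\ref{S2S2P2} applied to $H:=\Phi_q(\phi_0)^{\kappa}$ (which is in $\calL^{r/((q-1)\kappa)}_{\rm{ul},\rho}(\RN)$), the $s$-integral converging because $q-1<N/\theta$ --- one is led to
\[
\int_0^tS(t-s)f(\bu(s))\,ds\le C\,C_0^{(p-1)/\kappa}(1+\sigma)^{p-1}\,\Phi_q(\bu),
\]
with the power of $t$ cancelling identically thanks to the critical scaling. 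Taking $C_0$ small via Proposition~\ref{S2S2P2} would then close $\calF[\bu]\le\bu$ on a short interval and Lemma~\ref{S2L-1} would finish the proof; the case $q=1$ would run in parallel, using (\ref{F2}) and the mechanics behind Theorem~\ref{S5T2}~(ii).

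The obstacle --- and the reason this is only a conjecture --- is the appearance of $\Phi_q(\bu)=(q-1)^{q-1}F(\bu)^{-(q-1)}$, rather than $\bu$ itself, on the right-hand side of the Duhamel estimate: to conclude $\calF[\bu]\le\bu$ one would then need the sharp power comparison $F(u)^{-(q-1)}\le Cu$ for large $u$. For $f(u)=u^p$ this is an identity ($\Phi_q=\mathrm{id}$), which is precisely why Theorem~\ref{S5T1}~(ii) works; but under (\ref{F1}) with $f'(u)F(u)\le q$ one has only $F(u)^{-(q-1)+\e}\le Cu$ for every $\e>0$ (Lemma~\ref{S3L2}~(i)), and the borderline case genuinely fails: for $f(u)=(u+1)^p\log(u+1)$, which is covered by (\ref{F1}) with $f'(u)F(u)\le q$ (Example~\ref{EX1}~(i)), one has $F(u)^{-(q-1)}\sim c\,u(\log u)^{q-1}$. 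Inserting the $\e$-loss, as Lemma~\ref{S3L3} does, drags in a factor $t^{-cN\e/(\theta r)}$: harmless when $r>N/\theta$, fatal at $r=N/\theta$, and no choice of the exponent $\kappa$ buys surplus $t$-decay without strengthening the hypothesis $F(\phi_0)^{-r}\in\calL^1_{\rm{ul},\rho}(\RN)$. One also cannot sidestep this through the corollary at the end of Section~5, since for $p'>p$ the assumption $\phi^{(p'-1)N/\theta}\in\calL^1_{\rm{ul},\rho}(\RN)$ is strictly stronger than $F(\phi)^{-N/\theta}\in\calL^1_{\rm{ul},\rho}(\RN)$. Overcoming the obstruction seems to require a genuinely self-improving supersolution --- time-weighted or iterated, in the spirit of the $w(t)$ in \cite[Theorem~4.4]{RS13} --- or a quantitative refinement of (\ref{F1}) fixing the rate at which $f(u)F(u)\to q$, so that the $\e$-corrections become summable uniformly in $t$; neither is available under the present hypotheses.
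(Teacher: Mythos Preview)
This statement is a \emph{conjecture} in the paper (Conjecture~6.1), explicitly listed among the problems ``left open'' in Section~6; the paper provides no proof. Your proposal is therefore not competing against an existing argument, and you have correctly recognised this: your write-up is not a proof but an honest diagnosis of why the subcritical machinery of Section~3, even when upgraded with the small-constant smoothing of Proposition~\ref{S2S2P2} as in Section~5, stalls at the endpoint for general $f$.

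Your identification of the obstruction is accurate. The Duhamel estimate, run through the pullback $\Phi_q=F_q^{-1}\circ F$, naturally produces a bound of the form $C\,C_0^{(p-1)/\kappa}\Phi_q(\bu)$ rather than $C\,C_0^{(p-1)/\kappa}\bu$, and closing the loop then demands $F(u)^{-(q-1)}\le Cu$ for large $u$. Under (\ref{F1}) with $f'(u)F(u)\le q$ this holds only with an $\e$-loss (Lemma~\ref{S3L2}~(i)), and your counterexample $f(u)=(u+1)^p\log(u+1)$ is exactly right: there $F(u)^{-(q-1)}\sim c\,u(\log u)^{q-1}$, so the sharp inequality genuinely fails. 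In the subcritical proofs (Lemmas~\ref{S3L3}--\ref{S3L4}) the $\e$-loss is absorbed by the surplus $t^{1-(1+\e)N/(\theta r)}$, which vanishes precisely when $r=N/\theta$; in Theorems~\ref{S5T1}--\ref{S5T2} the loss is absent because $\Phi_q=\mathrm{id}$ when $f=f_q$. Your analysis pinpoints exactly the gap the authors leave open, and your remarks on what a resolution would require (a self-improving or iterated supersolution, or a quantitative rate in $f'(u)F(u)\to q$) are reasonable speculations. One small note: the conjecture as stated has $r=N/2$, i.e.\ the classical case $\theta=2$; your discussion tacitly works with the natural fractional endpoint $r=N/\theta$, which is the more general (and equally open) version.
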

\begin{conjecture}
Theorem~\ref{B}~(i) holds without the assumption $f'(u)F(u)\le 1$ for large $u>0$.
\end{conjecture}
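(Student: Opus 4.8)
The plan is to derive both assertions from the critical-case supersolution constructions in the proofs of Theorems~\ref{S5T1}~(ii) and \ref{S5T2}~(ii), feeding in only the pointwise bounds $f(u)\le Cu^p$, respectively $f(u)\le Ce^u$, for large $u$, together with the monotonicity of $f$ from (\ref{F1}). The mechanism is domination: if $\tilde f$ is nondecreasing with $f\le\tilde f$ and $\bu$ satisfies $S(t)\phi+\int_0^tS(t-s)\tilde f(\bu(s))\,ds\le\bu$, then a fortiori $\calF[\bu]\le\bu$, so by Lemma~\ref{S2L-1} it is enough to exhibit a supersolution when the nonlinearity is a constant multiple of $u^p$ (resp.\ $e^u$). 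The only subtlety is that the power bound holds merely for $u\ge u_*$, which I will absorb by shifting the trial supersolution by a fixed constant.

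For part~(i), fix $u_*>0$ with $f(u)\le Cu^p$ for $u\ge u_*$, set $\alpha:=N(p-1)/\theta$ and $M:=\max\{u_*,1\}$, and test
\[
\bu(t):=(1+\sigma)\bigl(S(t)\phi^{\beta}\bigr)^{1/\beta}+M,
\]
with $\beta:=\alpha$ in the range $1+\theta/N<p<N/(N-\theta)$ and $\beta:=p$ in the range $p\ge N/(N-\theta)$, exactly as in the two cases of Theorem~\ref{S5T1}~(ii). Since $\bu\ge M\ge u_*$ everywhere, $f(\bu)\le C\bu^p$, and $(a+b)^p\le 2^{p-1}(a^p+b^p)$ gives
\[
C\int_0^tS(t-s)\bu(s)^p\,ds\le 2^{p-1}C(1+\sigma)^p\int_0^tS(t-s)\bigl[(S(s)\phi^{\beta})^{p/\beta}\bigr]ds+2^{p-1}CM^p t.
\]
The first integral is handled exactly as in the proof of Theorem~\ref{S5T1}~(ii) — using $\phi^{\beta}\in\calL^1_{\rm{ul},\rho}(\RN)$ when $\beta=\alpha$, and when $\beta=p$ the fact that $\phi^{\alpha}\in\calL^1_{\rm{ul},\rho}(\RN)$ forces $\phi^p\in\calL^{\alpha/p}_{\rm{ul},\rho}(\RN)$ by Hölder continuity of $g\mapsto g^{p/\alpha}$, together with Propositions~\ref{P1} and \ref{S2S2P2} — and is bounded by $\e_0\,(S(t)\phi^{\beta})^{1/\beta}$ with $\e_0$ as small as we wish once $C_0$ and $T$ are small. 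Combining with Jensen's inequality (Proposition~\ref{P1}) applied to $u\mapsto u^{\beta}$, which gives $S(t)\phi\le(S(t)\phi^{\beta})^{1/\beta}$, one obtains $\calF[\bu]\le(1+\e_0)(S(t)\phi^{\beta})^{1/\beta}+2^{p-1}CM^p t$, which is $\le\bu$ for $0<t<T$ as soon as $\e_0\le\sigma$ and $2^{p-1}CM^{p-1}T\le1$; then Lemma~\ref{S2L-1} applies.

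For part~(ii), fix $u_*$ with $f(u)\le Ce^u$ for $u\ge u_*$; since $f$ is nondecreasing, $f(u)\le\tilde Ce^u$ for \emph{every} $u\ge0$ with $\tilde C:=\max\{C,f(u_*)\}$, so no shift is required. Take $\bu(t):=S(t)\phi+\sigma$ as in Theorem~\ref{S5T2}~(ii) and rerun that proof with $\tilde Ce^u$ in place of $e^u$: the only change is a factor $\tilde C$ in the estimate of $\int_0^tS(t-s)e^{\bu(s)}\,ds$ (see (\ref{S5T2E2})), so the closing condition becomes $\tilde CC_0e^{\sigma}\le\sigma$ when $r\ge1$ and $\tilde Ce^{\sigma}C_0^{1/r}/r\le\sigma$ when $r=N/\theta<1$, both attainable since $e^{r\phi}\in\calL^1_{\rm{ul},\rho}(\RN)$ makes $C_0$ arbitrarily small by Proposition~\ref{S2S2P2}.

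The step I expect to need the most care is the bookkeeping in part~(i) forced by the ``for large $u$'' caveat: one must propagate the additive constant $M$ through the whole computation of $\calF[\bu]$ and check that the extra term $\sim M^p t$ is reabsorbed by the $+M$ in $\bu$ for small time, and also re-verify the membership $\phi^p\in\calL^{\alpha/p}_{\rm{ul},\rho}(\RN)$ used when $p\ge N/(N-\theta)$. Everything else reduces to the arithmetic already carried out in Section~5, so I would simply record these modifications and point to the displays (\ref{S5T1E2}) and (\ref{S5T2E2}).
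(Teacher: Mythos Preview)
Your proposal does not address the stated conjecture at all; it is a proof of the Corollary in Section~5.3, which is a different statement. The conjecture asks whether Theorem~\ref{B}~(i) --- concerning a \emph{general} $f$ satisfying (\ref{F1}) with $q=1$, the \emph{subcritical} exponent $r>N/\theta$, and the integrability hypothesis $F(\phi)^{-r}\in L^1_{\rm{ul},\rho}(\RN)$ --- remains true when one drops the half of (\ref{F2}) saying $f'(u)F(u)\le 1$ for large $u$. Your argument instead treats the \emph{critical} case $r=N/\theta$ under the specific pointwise bounds $f(u)\le Cu^p$ or $f(u)\le Ce^u$, with the hypotheses $\phi^{(p-1)N/\theta}\in\calL^1_{\rm{ul},\rho}(\RN)$ or $e^{N\phi/\theta}\in\calL^1_{\rm{ul},\rho}(\RN)$.

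These differ in two essential ways. First, the class of nonlinearities in the conjecture is far larger than what your domination step covers: $q=1$ admits $f(u)=e^{u^p}$ with $p>1$ and iterated exponentials (Example~\ref{EX1}~(ii),(iii)), none of which satisfy $f(u)\le Ce^u$. Second, the integrability hypothesis in Theorem~\ref{B}~(i) is $F(\phi)^{-r}\in L^1_{\rm{ul},\rho}(\RN)$, which for $f(u)=e^{u^2}$ reads $e^{r\phi^2}\in L^1_{\rm{ul},\rho}(\RN)$ --- not the $e^{r\phi}$ condition your supersolution (\ref{S5T2E0}) consumes. The paper explicitly leaves this conjecture open: the obstruction is that the supersolution (\ref{SS3}) together with Lemma~\ref{S32L1} and Corollary~\ref{S32L2} genuinely use $f'(u)F(u)\le 1$ to make $\Phi_1(u)=-\log F(u)$ convex and to control $F(u-C_1F(u)^{\alpha})$ by $e^{\sigma}F(u)$, and no substitute is offered. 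Your write-up, while a perfectly reasonable sketch of the omitted proof of the Section~5.3 Corollary, does not touch this difficulty.
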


\bigskip

\noindent
{\bf Acknowledgements}

The second author was supported by JSPS KAKENHI Grant Number 19H01797.

\end{document}